\documentclass[11pt]{article}
\usepackage{amsmath,fullpage,amssymb,amsthm,hyperref,enumerate,mathtools}
\usepackage{tikz,color}
\usetikzlibrary{patterns}

\usepackage[utf8]{inputenc}
\usepackage[english]{babel}
\usepackage{amsfonts}
\usepackage{graphicx}
\usepackage[export]{adjustbox}
\usepackage{setspace} 
\usepackage{booktabs}
\usepackage{stmaryrd}
\usepackage{pgfplots}
\pgfplotsset{compat=newest}
\usepackage{sidecap}
\usepackage{tikz-3dplot}
\usetikzlibrary{calc,patterns,arrows,shapes.arrows,intersections,plotmarks}
\usepackage{xcolor}
\usepackage{xstring}
\usepackage{circuitikz}
\usepackage{caption}
\usepackage{listings}
\captionsetup{justification=centering}

\newtheorem{theorem}{Theorem}[section]
\newtheorem{proposition}[theorem]{Proposition}
\newtheorem{lemma}[theorem]{Lemma}
\newtheorem{corollary}[theorem]{Corollary}
  
\newtheorem{problem}{Problem}

\theoremstyle{definition}
  
\newtheorem{definition}[theorem]{Definition}
\newtheorem{algorithm}{Algorithm}

\theoremstyle{remark}

\usepackage[backend=biber,  url=false, doi=false,isbn=false,maxnames=5]{biblatex}
\addbibresource{triangular_partitions.bib}

\newcommand{\cD}{\mathcal{D}}
\newcommand{\cB}{\mathcal{B}}
\newcommand{\cT}{\mathcal{T}}
\newcommand{\cI}{\mathcal{I}}
\newcommand{\cC}{\mathcal{C}}
\DeclareMathOperator{\Conv}{Conv}
\newcommand{\bbN}{\mathbb{N}}
\newcommand{\bbZ}{\mathbb{Z}}
\newcommand{\bbR}{\mathbb{R}}
\newcommand{\TYP}{\mathbb{Y}_\Delta}
\newcommand{\Wide}{\Omega}
\newcommand\I{I}
\DeclareMathOperator{\dif}{dif}
\DeclareMathOperator{\wrd}{wrd}
\DeclareMathOperator{\arm}{arm}
\DeclareMathOperator{\leg}{leg}
\newcommand\bigO{\mathcal{O}}
\newcommand\pp{\hat{p}}
\newcommand{\LL}{\mathsf{L}}
\newcommand{\HH}{\mathsf{H}}
\newcommand{\axes}[2]{\draw[->] (-.5,0)--(#1+.5,0); \draw[->] (0,-.5)--(0,#2+.5);
\draw[gray,very thin] (0,0) grid (#1,#2);}

\title{Triangular partitions: enumeration, structure, and generation}
\author{Sergi Elizalde\thanks{Department of Mathematics, Dartmouth College, Hanover, NH, USA. \texttt{sergi.elizalde@dartmouth.edu}} \ and
Alejandro B.\ Galván\thanks{Centre de Formació Interdisciplinària Superior (CFIS) -- Universitat Politècnica de Catalunya (UPC), Barcelona, Spain. \texttt{alejandrobasilio7@gmail.com}}
}


\begin{document}

\maketitle

\begin{abstract}
A {\em triangular partition} is a partition whose Ferrers diagram can be separated from its complement (as a subset of $\bbN^2$) by a straight line. Having their origins in combinatorial number theory and computer vision, triangular partitions have been studied from a combinatorial perspective by Onn and Sturmfels, and by Corteel et al.\ under the name {\em plane corner cuts}, and more recently by Bergeron and Mazin. In this paper we derive new enumerative, geometric and algorithmic properties of such partitions. 

We give a new characterization of triangular partitions and the cells that can be added or removed while preserving the triangular condition, and use it to describe the Möbius function of the restriction of Young's lattice to triangular partitions.
We obtain a formula for the number of triangular partitions whose Young diagram fits inside a square, deriving, as a byproduct, a new proof of Lipatov's enumeration theorem for balanced words. 
Finally, we present an algorithm that generates all the triangular partitions of a given size, which is significantly more efficient than previous ones and allows us to compute the number of triangular partitions of size up to~$10^5$.
    \end{abstract}

\noindent {\bf Keywords:} triangular partition, corner cut,  balanced word, Young's lattice.

\noindent {\bf Mathematics subject classification:} 05A17, 05A15, 05A19, 05A16.

\section{Introduction}\label{sec:intro}

An integer partition is said to be triangular if its Ferrers diagram can be separated from its complement by a straight line. Triangular partitions and their higher-dimensional generalizations have been studied from different perspectives during the last five decades. They first appeared in the context of combinatorial number theory~\cite{Boshernitzan1981}, where they were called \textit{almost linear sequences}. 
Later, the closely related notion of \textit{digital straight lines} became relevant in the field of computer vision~\cite{Bruckstein1990}. From a combinatorial perspective, triangular partitions were first studied by Onn and Sturmfels~\cite{Onn1999}, who defined them in any dimension and called them \textit{corner cuts}. Soon after, Corteel et al.~\cite{Corteel1999} found an expression for the generating function for the number of plane corner cuts. More recently, motivated by work of Blasiak et al.~\cite{Blasiak2023} generalizing the shuffle theorem for paths under a line, Bergeron and Mazin~\cite{Bergeron2023} coined the term \emph{triangular partitions} and studied some of their combinatorial properties.

In this paper we obtain further enumerative, geometric, poset-theoretic, and algorithmic properties of triangular partitions. 
The paper is structured as follows. In Section~\ref{sec:background} we give basic definitions and summarize some of the previous work on triangular partitions.
In Section~\ref{sec:characterizations_triangular} we give a simple alternative characterization of triangular partitions, as those for which the convex hull of the Ferrers diagram and that of its complement (as a subset of $\bbN^2$) have an empty intersection.
We also characterize which cells can be added to or removed from the Young diagram while preserving triangularity.

In Section~\ref{sec:triangular_young_poset} we study the restriction of Young's lattice to triangular partitions. It was shown in~\cite{Bergeron2023} that this poset is a lattice. Here we completely describe its Möbius function, and we provide an explicit construction of the join and the meet of two triangular partitions. 

In Section~\ref{sec:sturmian}, we introduce a new encoding of triangular partitions in terms of balanced words, and use it to implement an algorithm which computes the number of triangular partitions of every size $n\le N$ in time $\bigO(N^{5/2})$. This allows us to produce the first $10^5$ terms of this sequence, compared to the $39$ terms that were known previously. 

In Section \ref{sec:generating-functions}, refining the approach from \cite{Corteel1999}, we obtain generating functions for triangular partitions with a given number of removable and addable cells. In Section~\ref{sec:subpartitions}, we provide a formula for the number of triangular partitions whose Young diagram fits inside a square (or equivalently, inside a staircase), which involves Euler's totient function. As a byproduct, we obtain a new combinatorial proof of a formula of Lipatov~\cite{Lipatov1982} for the number of balanced words.

We conclude by discussing some generalizations of triangular partitions in Section~\ref{sec:further}, and proposing possible directions for future research.

\section{Background}\label{sec:background}

\subsection{Triangular partitions}
A \emph{partition} $\lambda$ is a weakly decreasing sequence of positive integers, often called the {\em parts} of $\lambda$.
We will write $\lambda=(\lambda_1,\lambda_2,\dots,\lambda_k)$,
or $\lambda=\lambda_1\lambda_2\dots\lambda_k$ when there is no confusion. We call $|\lambda|=\lambda_1+\lambda_2+\dots+\lambda_k$ the {\em size} of $\lambda$. If $|\lambda|=n$, we say that $\lambda$ is a partition of~$n$.

We use $\bbN$ to denote the set of positive integers. The {\em Ferrers diagram} of $\lambda$ is the set of lattice points  
$$\{(a,b)\in\bbN^2\mid 1\le b \leq k,\;1\le a\leq\lambda_b\}.$$ 
The {\em Young diagram} of $\lambda$ is the set of unit squares (called {\em cells}) whose north-east corners are the points in the Ferrers diagram; see the examples in Figure~\ref{fig:partitions}. We identify each cell with its north-east corner, so we will also use the term cell to refer to points in the Ferrers diagram. In particular, we say that a cell lies above, below or on a line when the north-east corner does. Additionally, we will often identify $\lambda$ with its Ferrers diagram and with its Young diagram, and use notation such as $c=(a,b)\in\lambda$.

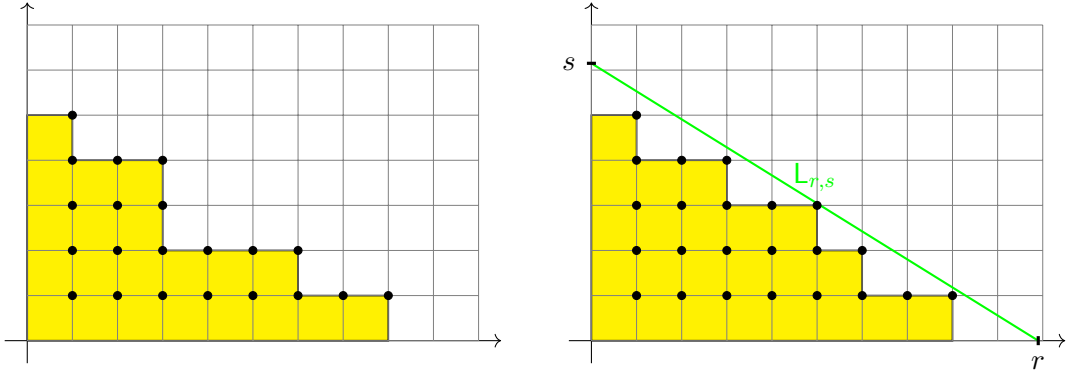
\begin{figure}[ht]
\centering
	\begin{tikzpicture}[scale=.6]
		\filldraw[color=black, fill=yellow, semithick] (0,0) -- (0,5) -- (1,5) -- (1,4) -- (3,4) -- (3,2) -- (6,2) -- (6,1) -- (8,1) -- (8,0) -- (0,0);
      \axes{10}{7}
         \foreach \c in {(1,1),(2,1),(3,1),(4,1),(5,1),(6,1),(7,1),(8,1),(1,2),(2,2),(3,2),(4,2),(5,2),(6,2),(1,3),(2,3),(3,3),(1,4),(2,4),(3,4),(1,5)} 
            {\filldraw[black] \c circle (2.5pt); }
    \begin{scope}[shift={(12.5,0)}]
		\filldraw[color=black, fill=yellow, semithick] (0,0) -- (0,5) -- (1,5) -- (1,4) -- (3,4) -- (3,3) -- (5,3) -- (5,2) -- (6,2) -- (6,1) -- (8,1) -- (8,0) -- (0,0);
        \axes{10}{7}
        \draw[green, thick] (0,6.15) -- node[above] {$\LL_{r,s}$} (9.9,0);
        \draw[very thick] (.1,6.15)--(-.1,6.15) node[left]{$s$};
         \draw[very thick] (9.9,.1)--(9.9,-.1) node[below]{$r$};
        \foreach \c in {(1,1),(2,1),(3,1),(4,1),(5,1),(6,1),(7,1),(8,1),(1,2),(2,2),(3,2),(4,2),(5,2),(6,2),(1,3),(2,3),(3,3),(4,3),(5,3),(1,4),(2,4),(3,4),(1,5)} 
            {\filldraw[black] \c circle (2.5pt); }
    \end{scope}
	\end{tikzpicture}
\caption{The Young diagram and the Ferrers diagram of partitions $\lambda = 86331$ (left) and $\tau=86531$ (right). The green cutting line shows that $\tau$ is triangular.}
\label{fig:partitions}
\end{figure}

For a partition $\lambda=\lambda_1\lambda_2\dots\lambda_k$, we call $\lambda_1$ its {\em width}, and $k$ its {\em height}.
The partition $(k,k-1,\dots ,2,1)$ will be referred to as the \emph{staircase partition} of height $k$, and denoted by $\sigma^k$. 
The {\em conjugate} of a partition $\lambda$, obtained by reflecting its Ferrers diagram along the diagonal $y=x$, will be denoted by $\lambda'$. 
Identifying $\lambda$ with its Ferrers diagram, we define its \emph{complement} to be the set $\bbN^2\setminus\lambda$.

Now we can state the definition of our main objects of study.

\begin{definition}\label{def:triangular}
A partition $\tau = \tau_1\tau_2\dots\tau_k$ is \emph{triangular} if there exist positive real numbers $r$ and $s$ such that
$$ \tau_j = \left\lfloor{r - jr/s}\right\rfloor, $$
for $1\le j\le k$, and $k = \lfloor s - s/r\rfloor$.
\end{definition}
In other words, $\tau$ is triangular if its Ferrers diagram consists of the points in $\bbN^2$ that lie on or below the line that passes through $(0,s)$ and $(r,0)$. 
This line, which has equation $x/r+y/s=1$, will be denoted by $\LL_{r,s}$ in the rest of the paper.
We say that $\LL_{r,s}$ is a \emph{cutting line} for $\tau$, or that it \emph{cuts off} $\tau$. A vector $v\in\bbR_{>0}^2$ is called a \emph{slope vector} of $\tau$ if it is perpendicular to a cutting line for $\tau$.
Unlike in the definition given in \cite{Bergeron2023}, here we do not allow $\tau$ to have parts equal to $0$, hence the condition on $k$.

Denote by $\Delta(n)$ the set of triangular partitions of $n$, and by $\Delta=\bigcup_{n\ge0}\Delta(n)$ the set of all triangular partitions. Throughout the paper, we will often use $\tau$ to denote a triangular partition.

\subsection{Enumeration}\label{sec:enumeration}

Corteel et al.~\cite{Corteel1999} prove that the set of slope vectors of any given nonempty triangular partition $\tau$ is an open cone. For the different partitions in $\Delta(n)$, these cones are disjoint. To count triangular partitions of $n$, Corteel et al.\ find the number of separating rays between these open cones. Each ray can be identified by a pair of relatively prime numbers $(a,b)$ that determine its slope. If the cone adjacent to such a ray from the right corresponds to a triangular partition $\tau$, then there is a line $L$ perpendicular to $(a,b)$ so that $\tau$ consists of the points in $\bbN$ strictly below $L$, along with the leftmost $m$ points in $L\cap\bbN^2$, for some $m$. Letting $k=|L\cap\bbN^2|$, and letting
$i+1$ be the $x$-coordinate of the leftmost cell in $L\cap\bbN^2$, and $j+1$ be the $y$-coordinate of the rightmost cell in $L\cap\bbN^2$,
the line $L$ can be encoded by the parameters $a,b,i,j,k$.
Then, $\tau$ can be decomposed as shown in Figure \ref{fig:dec_tp}, and its size $n$ is equal to
\begin{align}\label{eq:NDelta}
N_\Delta(a,b,k,m,i,j) & = (k - 1)\left(\frac{(a + 1)(b + 1)}{2} - 1\right) + \binom{k - 1}{2}ab + ij\\
&\quad + i(k - 1)a + j(k - 1)b + T(a,b,j) + T(b,a,i) + m,
\nonumber
\end{align}
where $T(a,b,j) = \sum_{r = 1}^j(\lfloor rb/a\rfloor + 1)$.

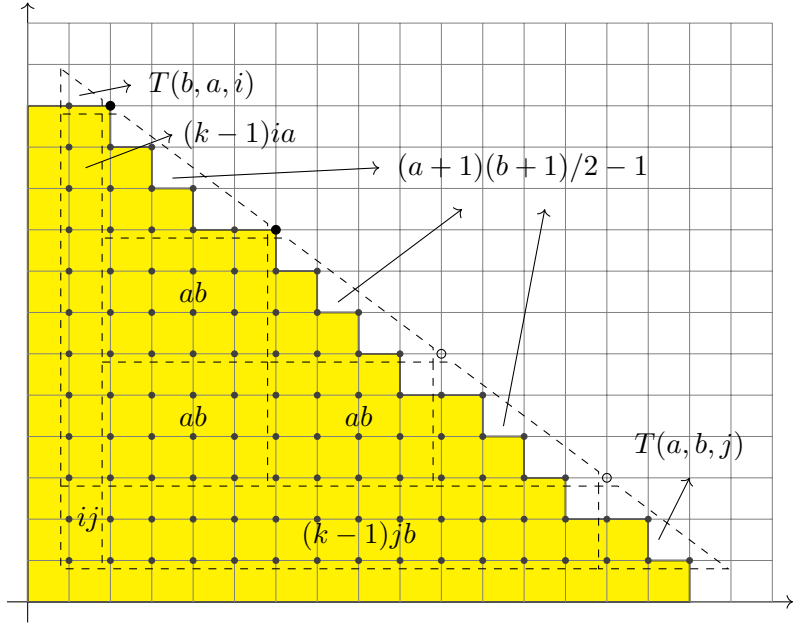
\begin{figure}[ht]
\centering
\begin{tikzpicture}[scale=.55]
		\filldraw[color=black, fill=yellow, semithick] (0,0) -- (0,12) -- (2,12) -- (2,11) -- (3,11) -- (3,10) -- (4,10) -- (4,9) -- (6,9) -- (6,8) -- (7,8) -- (7,7) -- (8,7) -- (8,6) -- (9,6) -- (9,5) -- (11,5) -- (11,4) -- (12,4) -- (12,3) -- (13,3) -- (13,2) -- (15,2) -- (15,1) -- (16,1) -- (16,0) -- (0,0);
        \axes{18}{14}
        \draw[dashed] (1 - 1/5, 12 + 9/10) -- (1 - 1/5, 12 - 1/5) -- (2 - 1/5, 12 - 1/5) -- (2 - 1/5, 12 + 3/20) -- (1 - 1/5, 12 + 9/10);
        \foreach \i in {0,1,2}
            {\draw[dashed] (2 - 1/5 + 4*\i, 12 - 1/5 - 3*\i) -- (2 - 1/5 + 4*\i, 9 - 1/5 - 3*\i) -- (6 - 1/5 + 4*\i, 9 - 1/5 - 3*\i) -- (6 - 1/5 + 4*\i, 9 + 3/20 - 3*\i) -- (2 + 4/15 + 4*\i, 12 - 1/5 - 3*\i) -- (2 - 1/5 + 4*\i, 12 - 1/5 - 3*\i);}
        \draw[dashed] (14 - 1/5, 3 - 1/5) -- (14 - 1/5, 1 - 1/5) -- (16 + 14/15, 1 - 1/5) -- (14 + 4/15, 3 - 1/5) -- (14 - 1/5, 3 - 1/5);
        \draw[dashed] (1 - 1/5, 12 - 1/5) -- (1 - 1/5, 1 - 1/5) -- (14 - 1/5, 1 - 1/5);
        \draw[dashed] (2 - 1/5, 9 - 1/5) -- (2 - 1/5, 1 - 1/5);
        \draw[dashed] (1 - 1/5, 3 - 1/5) -- (10 - 1/5, 3 - 1/5);
        \draw[dashed] (2 - 1/5, 6 - 1/5) -- (6 - 1/5, 6 - 1/5);
        \draw[dashed] (6 - 1/5, 6 - 1/5) -- (6 - 1/5, 3 - 1/5);
        \filldraw (4,7) circle (1.5pt) node[above]{$ab$};
        \filldraw (4,4) circle (1.5pt) node[above]{$ab$};
        \filldraw (8,4) circle (1.5pt) node[above]{$ab$};
        \filldraw (2,2) circle (1.5pt) node[left]{$ij$};
        \filldraw (8,1) circle (1.5pt) node[above]{$(k - 1)jb$};
        \draw[->] (1.4, 10.5) -- (3.5, 11.3) node[right]{$(k-1)ia$};
        \foreach \x in {1,...,16}
            {\filldraw[gray!150] (\x,1) circle (2pt);}
        \foreach \x in {1,...,15}
            {\filldraw[gray!150] (\x,2) circle (2pt);}
        \foreach \x in {1,...,13}
            {\filldraw[gray!150] (\x,3) circle (2pt);}
        \draw (14,3) circle (3pt);
        \foreach \x in {1,...,12}
            {\filldraw[gray!150] (\x,4) circle (2pt);}
        \foreach \x in {1,...,11}
            {\filldraw[gray!150] (\x,5) circle (2pt);}
        \foreach \x in {1,...,9}
            {\filldraw[gray!150] (\x,6) circle (2pt);}
        \draw (10,6) circle (3pt);
        \foreach \x in {1,...,8}
            {\filldraw[gray!150] (\x,7) circle (2pt);}
        \foreach \x in {1,...,7}
            {\filldraw[gray!150] (\x,8) circle (2pt);}
        \foreach \x in {1,...,5}
            {\filldraw[gray!150] (\x,9) circle (2pt);}
        \filldraw (6,9) circle (3pt);
        \foreach \x in {1,...,4}
            {\filldraw[gray!150] (\x,10) circle (2pt);}
        \foreach \x in {1,...,3}
            {\filldraw[gray!150] (\x,11) circle (2pt);}
        \filldraw[gray!150] (1,12) circle (2pt);
        \filldraw (2,12) circle (3pt);
        \draw[->] (3.5, 10.25) -- (8.5, 10.5) node[right, outer sep=3pt]{$(a + 1)(b + 1)/2 - 1$};
        \draw[->] (7.5, 7.25) -- (10.5, 9.5);
        \draw[->] (11.5, 4.25) -- (12.5, 9.5);
        \draw[->] (1.25, 12.25) -- (2.5, 12.5) node[right, outer sep=3pt]{$T(b,a,i)$};
        \draw[->] (15.25, 1.5) -- (16, 3) node[above, outer sep=3pt]{$T(a,b,j)$};
	\end{tikzpicture}
\caption{The decomposition of triangular partitions used in~\cite{Corteel1999}.}
\label{fig:dec_tp}
\end{figure}

Corteel et al.~\cite{Corteel1999} deduce that the generating function of triangular partitions with respect to their size can be expressed as follows.
\begin{theorem}[\cite{Corteel1999}]\label{thm:GDelta}
$$G_{\Delta}(z) \coloneqq \sum_{n\ge0}|\Delta(n)|z^n = \frac{1}{1 - z} + \sum_{\gcd(a, b) = 1}\sum_{\substack{0\leq j < a \\ 0\leq i < b}}\sum_{1\leq m < k}z^{N_\Delta(a,b,k,m,i,j)}.$$
\end{theorem}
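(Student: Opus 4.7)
The plan is to follow the geometric strategy outlined above: slice the open first quadrant of slope vectors into open cones, one per nonempty triangular partition, and encode each partition by a cutting line that sits on one of its cone's rational boundary rays.

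I would begin by recording the cone structure from \cite{Corteel1999}: for each nonempty $\tau\in\Delta$ the slope vectors form a nonempty open convex cone $C_\tau\subset\bbR_{>0}^2$, distinct cones are disjoint, and their closures tile $\bbR_{>0}^2$ with adjacent cones sharing a boundary ray in a rational direction. Such a ray has primitive direction $(a,b)\in\bbN^2$ with $\gcd(a,b)=1$, and the perpendicular lines meeting $\bbN^2$ in at least two points form a discrete subfamily whose lattice points form an arithmetic progression with step $(b,-a)$. A line $L$ in this subfamily with $k\ge 2$ lattice points admits the canonical encoding $(a,b,k,i,j)$ from the preamble; the constraints $0\le i<b$ and $0\le j<a$ come from the fact that translating $L$ by one lattice period along its direction would shift $i$ by $b$ (or $j$ by $a$), leaving exactly one representative of each translation class in those ranges.

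Given $(a,b,k,i,j)$ and $m\in\{1,\dots,k-1\}$, let $\tau_{a,b,k,m,i,j}$ consist of the cells strictly below $L$ together with the leftmost $m$ lattice points on $L\cap\bbN^2$. I would show that this construction is a bijection between such sextuples and $\Delta\setminus\{(n):n\ge 0\}$: $(a,b)$ is recovered as the appropriate boundary direction of the slope cone of $\tau$, $L$ as the perpendicular line it meets, and $m$ as $|\tau\cap L|$. The single-row partitions $(n)$, whose slope cones extend asymptotically toward the positive $y$-axis and therefore lack a second rational boundary under the ``leftmost~$m$'' convention, together with the empty partition, account for the separate $\sum_{n\ge 0}z^n=1/(1-z)$ term.

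The size of $\tau_{a,b,k,m,i,j}$ is then computed via the decomposition in Figure~\ref{fig:dec_tp}: the Young diagram splits into $k-1$ congruent staircase blocks of size $(a+1)(b+1)/2-1$ between consecutive lattice points on $L$, $\binom{k-1}{2}$ rectangles of size $ab$ between non-adjacent blocks, flanking strips of sizes $(k-1)ia$ and $(k-1)jb$, a corner rectangle of size $ij$, two tail staircases of sizes $T(b,a,i)$ and $T(a,b,j)$ reaching the axes, and the $m$ cells of $\tau$ on $L$; summing gives~\eqref{eq:NDelta}. The main obstacle will be justifying the two tail-staircase counts, which requires enumerating lattice points beneath the extensions of $L$ past its lowest or leftmost lattice point and uses the Beatty-type distribution of $\lfloor rb/a\rfloor$. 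A secondary subtlety is pinpointing the single-row partitions as the unique family missed by the parameterization, which is what accounts for the otherwise mysterious $1/(1-z)$ term.
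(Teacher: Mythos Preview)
Your proposal is correct and follows essentially the same approach as the paper's outline of Corteel et al.'s argument: partition the first quadrant of slope vectors into open cones, assign to each cone the separating ray on its left boundary, encode that ray via the parameters $(a,b,k,m,i,j)$, and compute the size by the block decomposition of Figure~\ref{fig:dec_tp}. Your identification of the single-row partitions (together with $\epsilon$) as the family contributing $1/(1-z)$ agrees with the paper's description of ``the open cone that lies to the left of the leftmost separating ray,'' since those cones are precisely the ones whose slope vectors approach the $y$-axis.
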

The term $\frac{1}{1 - z}$ in the above generating function accounts for the open cone that lies to the left of the leftmost separating ray, along with the empty partition.
Exploiting the idea of identifying each triangular partition of $n$ with a pair of relatively prime numbers, Corteel et al.~\cite{Corteel1999} obtain the following bounds.

\begin{theorem}[\cite{Corteel1999}]\label{thm:corteel}
    There exist positive constants $C$ and $C'$ such that, for all $n > 1$,
    $$
    C n\log n < |\Delta(n)| < C'n\log n.
    $$
\end{theorem}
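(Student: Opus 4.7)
The plan is to analyze, via Theorem~\ref{thm:GDelta}, the parametrization
\[
|\Delta(n)| = 1 + \#\bigl\{(a,b,i,j,k,m) : \gcd(a,b)=1,\; 0\le j<a,\; 0\le i<b,\; 1\le m<k,\; N_\Delta(a,b,k,m,i,j)=n\bigr\}
\]
for $n\ge 1$, combining three structural features of $N_\Delta$. First, the leading term is $\binom{k-1}{2}ab$, so $N_\Delta=n$ forces $ab\le 2n$ and $k=O\bigl(\sqrt{n/(ab)}\bigr)$. Second, $N_\Delta$ is linear in $m$ with coefficient $1$, so for each fixed $(a,b,i,j,k)$ at most one $m\in\{1,\dots,k-1\}$ can satisfy the equation, and the $k-1$ values $N_\Delta(a,b,k,1,i,j),\dots,N_\Delta(a,b,k,k-1,i,j)$ are consecutive integers. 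Third, with $(a,b,i,j)$ fixed, the increment from $k$ to $k+1$ is of order $k\cdot ab$, which exceeds $k-1$, so as $k$ varies the admissible intervals are pairwise disjoint and together have density $\Theta(1/(ab))$ inside $[1,n]$.

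For the upper bound, these observations give
\[
|\Delta(n)| \;\le\; \sum_{\substack{\gcd(a,b)=1\\ ab\le 2n}} \#\bigl\{(i,j,k) : n\in[N_\Delta(a,b,k,1,i,j),\, N_\Delta(a,b,k,1,i,j)+k-2]\bigr\}.
\]
The density estimate implies that, averaged over $n$, the inner count for each coprime pair $(a,b)$ is $O(1)$. Using the standard asymptotic $\#\{(a,b):\gcd(a,b)=1,\, ab\le 2n\}=\Theta(n\log n)$ (obtained via Möbius inversion and the harmonic sum), the total is $O(n\log n)$ on average. The identity
\[
(1-z)\left(G_\Delta(z)-\frac{1}{1-z}\right)=\sum_{\gcd(a,b)=1}\sum_{\substack{0\le j<a\\ 0\le i<b}}\sum_{k\ge 2} z^{N_\Delta(a,b,k,1,i,j)}\bigl(1-z^{k-1}\bigr),
\]
which telescopes the $m$-sum, then reduces the individual-$n$ estimate to counting $(a,b,i,j,k)$ with $N_\Delta(a,b,k,1,i,j)$ in a short window around $n$, yielding $|\Delta(n)|=O(n\log n)$.

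For the lower bound, it suffices to exhibit enough partitions of size exactly $n$. For a positive proportion of coprime pairs $(a,b)$ with $ab$ in a window $[cn,2cn]$, taking $k=2$ makes
\[
N_\Delta(a,b,2,1,i,j)=\frac{(a+1)(b+1)}{2}-1+ij+ia+jb+T(a,b,j)+T(b,a,i)+1
\]
traverse, as $(i,j)\in[0,b-1]\times[0,a-1]$ varies, an interval of integers of length $\Theta(ab)\asymp n$, hitting $n$ for a positive fraction of such pairs. Since the number of coprime pairs in the window is $\Theta(n\log n)$, this gives $|\Delta(n)|\ge Cn\log n$.

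The main technical obstacle is the equidistribution of $N_\Delta(a,b,k,1,i,j)$ as $(i,j)$ ranges over $[0,b-1]\times[0,a-1]$: one must control the number-theoretic summand $T(a,b,j)=\sum_{r=1}^j(\lfloor rb/a\rfloor+1)$ together with the bilinear part $ij+(k-1)(ia+jb)$, and exploit $\gcd(a,b)=1$ to argue that these values spread uniformly through an interval of length $\Theta(ab\cdot k)$. This equidistribution drives both the individual-$n$ upper bound (preventing pathological pileups of values) and the lower-bound density argument.
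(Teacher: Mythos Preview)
The paper does not prove this theorem; it is quoted from \cite{Corteel1999}. The only description the paper gives of the original argument (in Section~\ref{sec:algorithm}) is the inequality $\pp(n/8)/3 \le |\Delta(n)| \le \pp(2n+1)$, where $\pp(m)$ counts coprime pairs $(a,b)$ with $ab<m$. That argument is geometric: each nonempty $\tau\in\Delta(n)$ determines an open cone of slope vectors, the separating rays between cones are indexed by coprime pairs, and one bounds $|\Delta(n)|$ by the number of such rays with $ab$ in a suitable window. The asymptotic $\pp(m)=\Theta(m\log m)$ then finishes.

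Your route through the explicit parametrization $N_\Delta$ is different and not yet a proof. The upper bound as written is only an average statement: you show the intervals $[N_\Delta(a,b,k,1,i,j),\,N_\Delta(a,b,k,1,i,j)+k-2]$ are disjoint in $k$ and have total density $\Theta(1/(ab))$ on $[1,n]$, which gives $\sum_{m\le n}|\Delta(m)|=O(n^2\log n)$ but says nothing about an individual $n$. The telescoped identity you write still requires, for each fixed $n$, a bound on the number of $(a,b,i,j,k)$ with $N_\Delta(a,b,k,1,i,j)=n$ or $N_\Delta(a,b,k,k,i,j)=n+1$, and that is exactly the equidistribution you flag as unproved at the end. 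The lower bound has the same gap: ``hitting $n$ for a positive fraction of such pairs'' is precisely the statement that needs an argument, and controlling $T(a,b,j)+T(b,a,i)+ij$ modulo $1$ uniformly over coprime $(a,b)$ is not routine.

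In short, you have correctly identified the scale $n\log n$ and the relevant divisor-type sum over coprime pairs, but the passage from average to pointwise is the entire content of the theorem, and your proposal leaves it open. The Corteel et al.\ argument sidesteps this by never touching $N_\Delta$: it works directly with the cone decomposition, so the correspondence between partitions of $n$ and coprime pairs is a bijection-up-to-bounded-multiplicity rather than an equidistribution statement.
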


\subsection{Addable and removable cells} \label{subsection:bergeron}

Let $\lambda = \lambda_1\dots\lambda_k$ be a partition, and let $c = (i,j)$ be a cell of its Young diagram. Define the {\em arm length} and the {\em leg length} of $c$ to be $\arm(c) = \lambda_j - i$ and $\leg(c) = \lambda'_i-j$, that is, the number of cells to the right of $c$ in its row, and above $c$ in its column, respectively. Bergeron and Mazin~\cite{Bergeron2023} give the following characterization of triangular partitions.

\begin{lemma}[{\cite[Lemma 1.2]{Bergeron2023}}]\label{charact_bergeron}
    A partition $\lambda$ is triangular if and only if $t_\lambda^- < t_\lambda^+$, where 
    $$ t_\lambda^- = \max_{c\in\lambda}\frac{\leg(c)}{\arm(c) + \leg(c) + 1}, \quad\text{and}\quad  
    t_\lambda^+ = \min_{c\in\lambda}\frac{\leg(c) + 1}{\arm(c) + \leg(c) + 1}. $$ 
    In this case, $(t, 1 - t)$ is a slope vector of $\lambda$ if and only if $t_\lambda^- < t < t_\lambda^+$.
\end{lemma}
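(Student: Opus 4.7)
The plan is to translate triangularity into linear constraints on the slope parameter $t$. Setting $t=s/(r+s)\in(0,1)$, the vector $(t,1-t)$ is a slope vector for $\lambda$ exactly when there is a threshold $r'$ such that the strictly increasing function $f(i,j)=ti+(1-t)j$ satisfies $f(c)\le r'$ for $c\in\lambda$ and $f(c)>r'$ for $c\in\bbN^2\setminus\lambda$. The rest of the argument analyses this separation inequality cell by cell.

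For the forward direction, I fix any cell $c=(i,j)\in\lambda$ and use two pairs of neighbors prescribed by its arm and leg. The rightmost cell in row $j$ is $(i+\arm(c),j)\in\lambda$, and the first lattice point above the top of column $i$ is $(i,j+\leg(c)+1)\notin\lambda$; the chain $t(i+\arm(c))+(1-t)j\le r'<ti+(1-t)(j+\leg(c)+1)$ simplifies to $t<\frac{\leg(c)+1}{\arm(c)+\leg(c)+1}$. The mirror pair $(i,j+\leg(c))\in\lambda$ and $(i+\arm(c)+1,j)\notin\lambda$ yields $t>\frac{\leg(c)}{\arm(c)+\leg(c)+1}$. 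Taking extrema over $c\in\lambda$ gives $t_\lambda^-<t<t_\lambda^+$, which proves the ``only if'' direction together with the necessity in the slope-vector statement.

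For the converse, I pick any $t\in(t_\lambda^-,t_\lambda^+)$ and let $r'=\max_{c\in\lambda}f(c)$. Strict monotonicity of $f$ forces this maximum to be attained at a removable cell $c_1=(i_1,j_1)$ and the minimum of $f$ on $\bbN^2\setminus\lambda$ to be attained at an addable cell $c_2=(i_2,j_2)$, so it suffices to show $f(c_1)<f(c_2)$. A short Young-diagram argument rules out $i_2>i_1$ and $j_2>j_1$ simultaneously: otherwise $(i_2-1,j_2)\in\lambda$ would force $(i_1,j_2)\in\lambda$, contradicting the removability of $(i_1,j_1)$. Since $i_2\le i_1,\ j_2\le j_1$ is incompatible with $(i_2,j_2)\notin\lambda$, exactly one of two cases arises. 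In the case $i_2\le i_1,\ j_2>j_1$, I take the witness cell $c=(i_2,j_1)\in\lambda$, for which $\arm(c)=i_1-i_2$ and $\leg(c)=j_2-1-j_1$ (using that addability of $(i_2,j_2)$ forces $\lambda'_{i_2}=j_2-1$); the bound $t<t_\lambda^+\le\frac{\leg(c)+1}{\arm(c)+\leg(c)+1}$ rearranges to $t(i_1-i_2)<(1-t)(j_2-j_1)$, which is exactly $f(c_1)<f(c_2)$. The symmetric case uses witness $c=(i_1,j_2)$ and the lower bound $t>t_\lambda^-$.

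The main obstacle is the geometric case analysis in the converse: showing that a removable/addable pair avoids general northeast/southwest position, and then selecting a witness cell inside $\lambda$ whose arm and leg encode exactly the coordinate offsets between $c_1$ and $c_2$. Once that case split is pinned down, the algebraic step is a one-line rearrangement confirming that the bound coming from the witness cell matches precisely the strict inequality $f(c_1)<f(c_2)$ needed for the line $tx+(1-t)y=r'$ to separate $\lambda$ from its complement, thereby giving a cutting line $\LL_{r,s}$ with $s/(r+s)=t$.
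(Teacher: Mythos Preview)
The paper does not prove this lemma; it is quoted verbatim as \cite[Lemma~1.2]{Bergeron2023} and used as background. So there is no proof in the paper to compare against. That said, your argument is correct and self-contained.

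One terminological remark: in the converse you write that the maximum of $f$ on $\lambda$ is attained at a ``removable cell'' $c_1$ and the minimum of $f$ on $\bbN^2\setminus\lambda$ at an ``addable cell'' $c_2$. At that stage $\lambda$ is not yet known to be triangular, so the paper's Definitions of removable/addable (which are specific to triangular partitions) do not apply; what you mean, and what your monotonicity argument actually gives, is that $c_1$ is a \emph{corner cell} and $c_2$ is a \emph{complementary corner cell} in the ordinary partition sense. With that clarification the case analysis goes through exactly as you describe: the witness cell $c=(i_2,j_1)$ (respectively $c=(i_1,j_2)$) has arm and leg equal to the coordinate offsets $i_1-i_2$, $j_2-1-j_1$ (respectively $i_2-1-i_1$, $j_1-j_2$), and the bound $t<t_\lambda^+$ (respectively $t>t_\lambda^-$) rearranges to $f(c_1)<f(c_2)$, yielding the separating line.
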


Bergeron and Mazin~\cite{Bergeron2023} also define a moduli space of lines, where each point $(r,s)\in\bbR_{>0}^2$ represents the line $\LL_{r,s}$. The set of lines that pass through the lattice point $(a,b)\in\bbN^2$ are then represented in the moduli space by the hyperbola $\HH_{a,b} = \{(r,s)\in\bbR_{>0}^2\mid (r - a)(s - b) = ab\}$. Every $(r,s)$ that lies above (respectively, below) $\HH_{a,b}$ represents a line which cuts off a triangular partition $\tau$ whose Young diagram contains (respectively, does not contain) the cell $(a,b)$. Therefore, we can interpret crossing the hyperbola $\HH_{a,b}$ as ``adding'' or ``removing'' the cell $(a,b)$. It is proved 
in~\cite{Bergeron2023} that there is a natural bijection between the connected components of $\bbR_{>0}^2\setminus\bigcup_{(a,b)\in\bbN^2}\HH_{a,b}$ and the set $\Delta$ of triangular partitions, as shown in Figure~\ref{fig:moduli_space_of_lines}. This interpretation motivates the following definition.

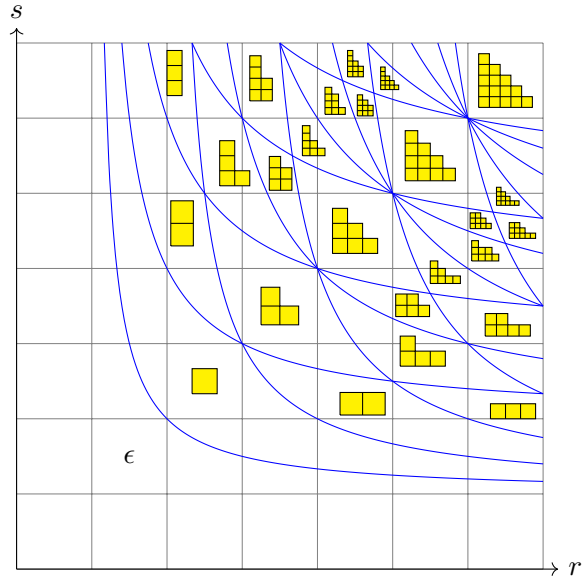
\begin{figure}[ht]
    \centering
    \begin{tikzpicture}[scale=1,domain=0:7]
      \draw[very thin,color=gray] grid (7,7);    
      \draw[->] (0,0) -- (7.2,0) node[right] {$r$};
      \draw[->] (0,0) -- (0,7.2) node[above] {$s$};    
      \draw[color=blue, domain=7/6:7, smooth, samples = 100]    plot (\x,{\x/(\x-1)});
      \draw[color=blue, domain=14/6:7, smooth, samples = 100]    plot (\x,{\x/(\x - 2)});
      \draw[color=blue, domain=7/5:7, smooth, samples = 100]    plot (\x,{2*\x/(\x - 1)});
      \draw[color=blue, domain=14/5:7, smooth, samples = 100]    plot (\x,{2*\x/(\x - 2)});
      \draw[color=blue, domain=21/6:7, smooth, samples = 100]    plot (\x,{\x/(\x - 3)});
      \draw[color=blue, domain=21/5:7, smooth, samples = 100]    plot (\x,{2*\x/(\x - 3)});
      \draw[color=blue, domain=7/4:7, smooth, samples = 100]    plot (\x,{3*\x/(\x - 1)});
      \draw[color=blue, domain=14/4:7, smooth, samples = 100]    plot (\x,{3*\x/(\x - 2)});
      \draw[color=blue, domain=21/4:7, smooth, samples = 100]    plot (\x,{3*\x/(\x - 3)});
      \draw[color=blue, domain=28/6:7, smooth, samples = 100]    plot (\x,{\x/(\x - 4)});
      \draw[color=blue, domain=28/5:7, smooth, samples = 100]    plot (\x,{2*\x/(\x - 4)});
      \draw[color=blue, domain=7/3:7, smooth, samples = 100]    plot (\x,{4*\x/(\x - 1)});
      \draw[color=blue, domain=14/3:7, smooth, samples = 100]    plot (\x,{4*\x/(\x - 2)});
      \draw[color=blue, domain=35/6:7, smooth, samples = 100]    plot (\x,{\x/(\x - 5)});
      \draw[color=blue, domain=7/2:7, smooth, samples = 100]    plot (\x,{5*\x/(\x - 1)});
      \filldraw[color=black, fill=yellow] (2 + 1/3, 2 + 1/3) -- (2 + 1/3, 2 + 2/3) -- (2 + 2/3, 2 + 2/3) -- (2 + 2/3, 2 + 1/3) -- (2 + 1/3, 2 + 1/3);
      \filldraw[color=black, fill=yellow] (2.05, 4.3) -- (2.05, 4.9) -- (2.35, 4.9) -- (2.35, 4.3) -- (2.05, 4.3);
      \draw[color=black] (2.05, 4.6) -- (2.35, 4.6);
      \filldraw[color=black, fill=yellow] (3.36, 5.04) -- (3.36, 5.49) -- (3.51, 5.49) -- (3.51, 5.34) -- (3.66, 5.34) -- (3.66, 5.04) -- (3.36, 5.04);
      \draw[color=black] (3.36, 5.34) -- (3.51, 5.34);
      \draw[color=black] (3.36, 5.19) -- (3.66, 5.19);
      \draw[color=black] (3.51, 5.34) -- (3.51, 5.04);
      \filldraw[color=black, fill=yellow] (4.53, 6.03) -- (4.53, 6.31) -- (4.6, 6.31) -- (4.6, 6.24) -- (4.67, 6.24) -- (4.67, 6.17) -- (4.74, 6.17) -- (4.74, 6.03) -- (4.53, 6.03);
      \draw[color=black] (4.53, 6.24) -- (4.6, 6.24);
      \draw[color=black] (4.53, 6.17) -- (4.67, 6.17);
      \draw[color=black] (4.53, 6.1) -- (4.74, 6.1);
      \draw[color=black] (4.6, 6.24) -- (4.6, 6.03);
      \draw[color=black] (4.67, 6.17) -- (4.67, 6.03);
      \filldraw[color=black, fill=yellow] (3.25,3.25) -- (3.25,3.75) -- (3.5,3.75) -- (3.5,3.5) -- (3.75,3.5) -- (3.75,3.25) -- (3.25,3.25);
      \draw[color=black] (3.25, 3.5) -- (3.5,3.5) -- (3.5,3.25);
      \filldraw[color=black, fill=yellow] (2,6.3) -- (2, 6.9) -- (2.2, 6.9) -- (2.2, 6.3) -- (2,6.3);
      \draw[color=black] (2,6.5) -- (2.2,6.5);
      \draw[color=black] (2,6.7) -- (2.2,6.7);
      \filldraw[color=black, fill=yellow] (2.7,5.1) -- (2.7, 5.7) -- (2.9, 5.7) -- (2.9, 5.3) -- (3.1,5.3) -- (3.1,5.1) -- (2.7,5.1);
      \draw[color=black] (2.7,5.3) -- (2.9,5.3) -- (2.9,5.1);
      \draw[color=black] (2.7,5.5) -- (2.9,5.5);
      \filldraw[color=black, fill=yellow] (4.2,4.2) -- (4.2,4.8) -- (4.4,4.8) -- (4.4,4.6) -- (4.6,4.6) -- (4.6,4.4) -- (4.8,4.4) -- (4.8,4.2) -- (4.2,4.2);
      \draw[color=black] (4.2,4.6) -- (4.4,4.6) -- (4.4,4.2);
      \draw[color=black] (4.2,4.4) -- (4.6,4.4) -- (4.6,4.2);
      \filldraw[color=black, fill=yellow] (3.1, 6.23) -- (3.1,6.83) -- (3.25, 6.83) -- (3.25, 6.53) -- (3.4, 6.53) -- (3.4, 6.23) -- (3.1, 6.23);
      \draw[color=black] (3.1, 6.53) -- (3.25, 6.53);
      \draw[color=black] (3.1, 6.38) -- (3.4, 6.38);
      \draw[color=black] (3.25, 6.53) -- (3.25, 6.23);
      \draw[color=black] (3.1, 6.68) -- (3.25, 6.68);
      \filldraw[color=black, fill=yellow] (3.8,5.5) -- (3.8,5.9) -- (3.9,5.9) -- (3.9,5.7) -- (4,5.7) -- (4,5.6) -- (4.1,5.6) -- (4.1,5.5) -- (3.8,5.5);
      \draw[color=black] (3.8,5.8) -- (3.9,5.8);
      \draw[color=black] (3.8,5.7) -- (3.9,5.7) -- (3.9,5.5);
      \draw[color=black] (3.8,5.6) -- (4,5.6) -- (4,5.5);
      \filldraw[color=black, fill=yellow] (4.1,6.05) -- (4.1,6.41) -- (4.19,6.41) -- (4.19,6.32) -- (4.28,6.32) -- (4.28,6.14) -- (4.37,6.14) -- (4.37,6.05) -- (4.1,6.05);
      \draw[color=black] (4.1,6.32) -- (4.19,6.32) -- (4.19,6.05);
      \draw[color=black] (4.1,6.23) -- (4.28,6.23);
      \draw[color=black] (4.1,6.14) -- (4.28,6.14) -- (4.28,6.05);
      \filldraw[color=black, fill=yellow] (4.4,6.55) -- (4.4,6.9) -- (4.47,6.9) -- (4.47,6.76) -- (4.54,6.76) -- (4.54,6.69) -- (4.61,6.69) -- (4.61,6.55) -- (4.4,6.55);
      \draw[color=black] (4.4,6.83) -- (4.47,6.83);
      \draw[color=black] (4.4,6.76) -- (4.47,6.76) -- (4.47,6.55);
      \draw[color=black] (4.4,6.69) -- (4.54,6.69) -- (4.54,6.55);
      \draw[color=black] (4.4,6.62) -- (4.61,6.62);
      \filldraw[color=black, fill=yellow] (5 + 1/6, 5 + 1/6) -- (5 + 1/6, 5 + 5/6) -- (5 + 1/3, 5 + 5/6) -- (5 + 1/3, 5 + 2/3) -- (5.5, 5 + 2/3) -- (5.5, 5.5) -- (5 + 2/3, 5.5) -- (5 + 2/3, 5 + 1/3) -- (5 + 5/6, 5 + 1/3) -- (5 + 5/6, 5 + 1/6) -- (5 + 1/6, 5 + 1/6);
      \draw[color=black] (5 + 1/6, 5 + 2/3) -- (5 + 1/3, 5 + 2/3) -- (5 + 1/3, 5 + 1/6);
      \draw[color=black] (5 + 1/6, 5.5) -- (5.5, 5.5) -- (5.5, 5 + 1/6);
      \draw[color=black] (5 + 1/6, 5 + 1/3) -- (5 + 2/3, 5 + 1/3) -- (5 + 2/3, 5 + 1/6);
      \filldraw[color=black, fill=yellow] (6 + 1/7, 6 + 1/7) -- (6 + 1/7, 6 + 6/7) -- (6 + 2/7, 6 + 6/7) -- (6 + 2/7, 6 + 5/7) -- (6 + 3/7, 6 + 5/7) -- (6 + 3/7, 6 + 4/7) -- (6 + 4/7, 6 + 4/7) -- (6 + 4/7, 6 + 3/7) -- (6 + 5/7, 6 + 3/7) -- (6 + 5/7, 6 + 2/7) -- (6 + 6/7, 6 + 2/7) -- (6 + 6/7, 6 + 1/7) -- (6 + 1/7, 6 + 1/7);
      \draw[color=black] (6 + 1/7, 6 + 5/7) -- (6 + 2/7, 6 + 5/7) -- (6 + 2/7, 6 + 1/7);
      \draw[color=black] (6 + 1/7, 6 + 4/7) -- (6 + 3/7, 6 + 4/7) -- (6 + 3/7, 6 + 1/7);
      \draw[color=black] (6 + 1/7, 6 + 3/7) -- (6 + 4/7, 6 + 3/7) -- (6 + 4/7, 6 + 1/7);
      \draw[color=black] (6 + 1/7, 6 + 2/7) -- (6 + 5/7, 6 + 2/7) -- (6 + 5/7, 6 + 1/7);
      \filldraw[color=black, fill=yellow] (4.84,6.38) -- (4.84,6.68) -- (4.9,6.68) -- (4.9,6.56) -- (4.96,6.56) -- (4.96,6.5) -- (5.02,6.5) -- (5.02,6.44) -- (5.08,6.44) -- (5.08,6.38) -- (4.84,6.38);
      \draw[color=black] (4.84,6.62) -- (4.9,6.62);
      \draw[color=black] (4.84,6.56) -- (4.9,6.56) -- (4.9,6.38);
      \draw[color=black] (4.84,6.5) -- (4.96,6.5) -- (4.96,6.38);
      \draw[color=black] (4.84,6.44) -- (5.02, 6.44) -- (5.02,6.38);
      \filldraw[color=black, fill=yellow] (5.04,3.36) -- (5.49,3.36) -- (5.49,3.51) -- (5.34,3.51) -- (5.34,3.66) -- (5.04,3.66) -- (5.04,3.36);
      \draw[color=black] (5.34,3.36) -- (5.34,3.51);
      \draw[color=black] (5.19,3.36) -- (5.19,3.66);
      \draw[color=black] (5.34,3.51) -- (5.04,3.51);
      \filldraw[color=black, fill=yellow] (6.03,4.53) -- (6.31,4.53) -- (6.31,4.6) -- (6.24,4.6) -- (6.24,4.67) -- (6.17,4.67) -- (6.17,4.74) -- (6.03,4.74) -- (6.03,4.53);
      \draw[color=black] (6.24,4.53) -- (6.24,4.6);
      \draw[color=black] (6.17,4.53) -- (6.17,4.67);
      \draw[color=black] (6.1,4.53) -- (6.1,4.74);
      \draw[color=black] (6.24,4.6) -- (6.03,4.6);
      \draw[color=black] (6.17,4.67) -- (6.03,4.67);
      \filldraw[color=black, fill=yellow] (5.1,2.7) -- (5.7,2.7) -- (5.7,2.9) -- (5.3,2.9) -- (5.3,3.1) -- (5.1,3.1) -- (5.1,2.7);
      \draw[color=black] (5.3,2.7) -- (5.3,2.9) -- (5.1,2.9);
      \draw[color=black] (5.5,2.7) -- (5.5,2.9);
       \filldraw[color=black, fill=yellow] (6.23,3.1) -- (6.83,3.1) -- (6.83,3.25) -- (6.53,3.25) -- (6.53,3.4) -- (6.23,3.4) -- (6.23,3.1);
      \draw[color=black] (6.53,3.1) -- (6.53,3.25);
      \draw[color=black] (6.38,3.1) -- (6.38,3.4);
      \draw[color=black] (6.53,3.25) -- (6.23,3.25);
      \draw[color=black] (6.68,3.1) -- (6.68,3.25);
      \filldraw[color=black, fill=yellow] (5.5,3.8) -- (5.9,3.8) -- (5.9,3.9) -- (5.7,3.9) -- (5.7,4) -- (5.6,4) -- (5.6,4.1) -- (5.5,4.1) -- (5.5,3.8);
      \draw[color=black] (5.8,3.8) -- (5.8,3.9);
      \draw[color=black] (5.7,3.8) -- (5.7,3.9) -- (5.5,3.9);
      \draw[color=black] (5.6,3.8) -- (5.6,4) -- (5.5,4);
      \filldraw[color=black, fill=yellow] (6.05,4.1) -- (6.41,4.1) -- (6.41,4.19) -- (6.32,4.19) -- (6.32,4.28) -- (6.14,4.28) -- (6.14,4.37) -- (6.05,4.37) -- (6.05,4.1);
      \draw[color=black] (6.32,4.1) -- (6.32,4.19) -- (6.05,4.19);
      \draw[color=black] (6.23,4.1) -- (6.23,4.28);
      \draw[color=black] (6.14,4.1) -- (6.14,4.28) -- (6.05,4.28);
      \filldraw[color=black, fill=yellow] (6.55,4.4) -- (6.9,4.4) -- (6.9,4.47) -- (6.76,4.47) -- (6.76,4.54) -- (6.69,4.54) -- (6.69,4.61) -- (6.55,4.61) -- (6.55,4.4);
      \draw[color=black] (6.83,4.4) -- (6.83,4.47);
      \draw[color=black] (6.76,4.4) -- (6.76,4.47) -- (6.55,4.47);
      \draw[color=black] (6.69,4.4) -- (6.69,4.54) -- (6.55,4.54);
      \draw[color=black] (6.62,4.4) -- (6.62,4.61);
      \filldraw[color=black, fill=yellow] (6.38,4.84) -- (6.68,4.84) -- (6.68,4.9) -- (6.56,4.9) -- (6.56,4.96) -- (6.5,4.96) -- (6.5,5.02) -- (6.44,5.02) -- (6.44,5.08) -- (6.38,5.08) -- (6.38,4.84);
      \draw[color=black] (6.62,4.84) -- (6.62,4.9);
      \draw[color=black] (6.56,4.84) -- (6.56,4.9) -- (6.38,4.9);
      \draw[color=black] (6.5,4.84) -- (6.5,4.96) -- (6.38,4.96);
      \draw[color=black] (6.44,4.84) -- (6.44,5.02) -- (6.38,5.02);
      \filldraw[color=black, fill=yellow] (6.3,2) -- (6.9,2) -- (6.9,2.2) -- (6.3,2.2) -- (6.3,2);
      \draw[color=black] (6.5,2) -- (6.5,2.2);
      \draw[color=black] (6.7,2) -- (6.7,2.2);
      \filldraw[color=black, fill=yellow] (4.3,2.05) -- (4.9,2.05) -- (4.9,2.35) -- (4.3,2.35) -- (4.3,2.05);
      \draw[color=black] (4.6,2.05) -- (4.6,2.35);
      \node at (1.5,1.5) {$\epsilon$};
    \end{tikzpicture}
    \caption{The connected regions of $\bbR_{>0}^2\setminus\bigcup_{(a,b)\in\bbN^2}\HH_{a,b}$ correspond to triangular partitions.}
     \label{fig:moduli_space_of_lines}
\end{figure}

\begin{definition}
A cell of a triangular partition $\tau$ is \emph{removable} if removing it from $\tau$ yields a triangular partition. A cell of the complement $\bbN^2\setminus\tau$ is \emph{addable} if adding it to $\tau$ yields a triangular partition.
\end{definition}

The following results are proved in~\cite{Bergeron2023}.

\begin{lemma}[{\cite[Lemma 4.5]{Bergeron2023}}] \label{lem:removable-addable}
    Every nonempty triangular partition has either one removable cell and two addable cells, two removable cells and one addable cell, or two removable cells and two addable cells.
\end{lemma}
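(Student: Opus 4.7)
The plan is to work in the moduli space $\bbR_{>0}^2$ of cutting lines $\LL_{r,s}$ described just before the statement. For a nonempty triangular partition $\tau$, the corresponding region $R_\tau$ is bounded (since as $\max(r,s)\to\infty$ new cells eventually enter the cut-off partition), open, and simply connected, so its boundary is a simple closed curve built from finitely many hyperbola arcs. Crossing the arc on $\HH_{a,b}$ yields the triangular partition obtained from $\tau$ by toggling the cell $(a,b)$; hence the removable cells of $\tau$ correspond bijectively to the ``inner'' arcs (those on $\HH_{a,b}$ with $(a,b)\in\tau$), and the addable cells to the ``outer'' arcs.

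A direct computation shows that two distinct hyperbolas $\HH_{a,b}$ and $\HH_{a',b'}$ intersect in $\bbR_{>0}^2$ at exactly one point when $(a,b)$ and $(a',b')$ are incomparable in the coordinatewise order, and do not meet otherwise. Using this, I would first argue that the inner and outer arcs each form a single chain, meeting at exactly two extreme vertices $V^\pm$ where $\LL_{r,s}$ passes simultaneously through a cell of $\tau$ and a cell of its complement; these vertices correspond to the boundary slopes $t_\tau^\pm$ of the slope cone from Lemma~\ref{charact_bergeron}. Since $\tau$ is nonempty, both chains are nonempty, so there is at least one removable and at least one addable cell. Each intermediate vertex of the inner chain then corresponds to a cutting line through two or more cells of $\tau$ and no cell of the complement.

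The heart of the argument is to bound each chain by two arcs. Suppose for contradiction the inner chain has three arcs, corresponding to removable cells $c_i=(a_i,b_i)\in\tau$ with $a_1<a_2<a_3$ and $b_1>b_2>b_3$; then two intermediate inner vertices arise, at cutting lines $L_{12}$ through $c_1,c_2$ and $L_{23}$ through $c_2,c_3$. Since both lines cut off $\tau$, $c_3$ lies strictly below $L_{12}$ and $c_1$ lies strictly below $L_{23}$; a slope comparison shows that the line $L_{13}$ through $c_1,c_3$ has slope strictly between those of $L_{12}$ and $L_{23}$, and that $c_2$ lies strictly above $L_{13}$. Since $\text{slope}(L_{13})$ belongs to the open slope cone $(t_\tau^-,t_\tau^+)$, there must exist a valid cutting line $L^*$ for $\tau$ with this slope; $L^*$ being parallel to $L_{13}$ and having $c_2\in\tau$ on or below while keeping the adjacent complement cells $(a_2+1,b_2)$ and $(a_2,b_2+1)$ strictly above forces a geometric contradiction. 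A symmetric argument bounds the outer chain, and combining everything yields the three possibilities stated. The main obstacle is making the last geometric contradiction fully rigorous; a cleaner route likely uses the convex-hull characterization of triangular partitions developed in Section~\ref{sec:characterizations_triangular}, though that material is introduced only later in the paper.
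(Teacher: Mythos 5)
Note first that the paper does not prove this lemma at all: it is imported verbatim from Bergeron and Mazin \cite[Lemma 4.5]{Bergeron2023}, and the closest the paper comes is the later remark that Proposition~\ref{charact_trcp_tacp} implies the ``at most two'' part. Judged on its own, your moduli-space sketch has several genuine gaps. First, your conclusion does not follow from what you establish: showing ``at least one and at most two'' removable and addable cells still leaves the combination one removable/one addable, which the lemma excludes, and your final sentence silently drops it. In the moduli-space picture this exclusion is exactly where your ``two hyperbolas meet in at most one point'' observation should be invoked (a closed region boundary cannot consist of one inner and one outer arc), but you never draw that inference, and for unbounded regions it needs a separate argument. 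Second, the boundedness claim for $R_\tau$ is false: for $\tau=(1)$ (and any single row or column) the region is an unbounded strip --- for every large $s$, the lines $\LL_{r,s}$ with $s/(s-1)\le r<s/(s-2)$ cut off exactly the cell $(1,1)$ --- so your ``simple closed curve'' with two extreme vertices $V^{\pm}$ fails precisely for these partitions, and they would have to be handled separately (they are exactly the $1$-removable/$2$-addable cases your chain-meeting picture cannot see).

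Third, and most seriously, the heart of the argument --- ruling out three removable cells --- ends in an asserted ``geometric contradiction'' that, as stated, does not exist. A line parallel to $L_{13}$, lying weakly above it, with $c_2$ weakly below it and $(a_2+1,b_2)$, $(a_2,b_2+1)$ strictly above it, is perfectly possible (take $c_1=(1,3)$, $c_2=(3,2)$, $c_3=(4,1)$ and any line $y=-\tfrac23x+c$ with $4\le c<\tfrac{14}{3}$); those constraints are just what any cutting line near that slope does at a corner cell. The true obstruction requires producing a lattice point of the complement that is forced below any line removing one of the cells, which is exactly the content of the forward direction of Proposition~\ref{charact_trcp_tacp} (via the reflected point $(2a_3-a_2,\,2b_3-b_2)$): two removable cells must be consecutive vertices of $\Conv(\tau)$, so a third removable vertex between them is impossible. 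Combining that with Lemma~\ref{lemma_vertices}, the analogous statement for addable cells, an ``at least one of each'' argument, and an honest exclusion of the $(1,1)$ pattern would give a complete proof; as it stands, your write-up is the right general picture (essentially Bergeron--Mazin's setup) but a sketch rather than a proof, as you yourself concede for the final step.
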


\begin{lemma}[{\cite[Lemma 3.2]{Bergeron2023}}] \label{line_touching_removable_cell}
    Let $\tau$ be a triangular partition and let $c\in\tau$ be a removable cell. Then $\tau$ has a cutting line $\LL_{r,s}$ such that $c$ is the only point of $\tau$ on $\LL_{r,s}$.
\end{lemma}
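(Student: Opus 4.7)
The approach is to use the moduli space of lines described just before the statement. Write $c=(a,b)$. Since $c$ is removable, $\tau'\coloneqq\tau\setminus\{c\}$ is also triangular, so both $\tau$ and $\tau'$ correspond to open connected regions $R$ and $R'$ of $\bbR_{>0}^2\setminus\bigcup_{(i,j)\in\bbN^2}\HH_{i,j}$. The first step is to show that $R$ and $R'$ share a nondegenerate common boundary arc $\gamma\subset\HH_{a,b}$. Given any $(r_1,s_1)\in R$ and $(r_2,s_2)\in R'$, the cell $c$ lies on or below $\LL_{r_1,s_1}$ but strictly above $\LL_{r_2,s_2}$, so $(r_1,s_1)$ and $(r_2,s_2)$ lie on opposite sides of $\HH_{a,b}$. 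Since in any bounded subset of $\bbR_{>0}^2$ only finitely many of the hyperbolas appear (each $\HH_{i,j}$ lies in $(i,\infty)\times(j,\infty)$), a local transversality argument along $\HH_{a,b}$ produces the desired open arc $\gamma\subset\partial R\cap\partial R'$.

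Second, any two distinct hyperbolas $\HH_{a,b}$ and $\HH_{a',b'}$ meet in at most one point, as one verifies by writing their equations as the linear system $au+bv=1=a'u+b'v$ in the variables $u=1/r$, $v=1/s$. Consequently $\gamma$ contains only countably many points that lie on some other hyperbola, so one can choose $(r^*,s^*)\in\gamma$ avoiding every $\HH_{a',b'}$ with $(a',b')\neq(a,b)$.

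Finally, I would verify that $\LL_{r^*,s^*}$ has the required property. By construction the line passes through $c$ and through no other lattice point of $\bbN^2$. For every $(r,s)\in R$ one has $i/r+j/s<1$ for $(i,j)\in\tau\setminus\{c\}$ and $i/r+j/s>1$ for $(i,j)\in\bbN^2\setminus\tau$; letting $(r,s)\to(r^*,s^*)$ inside $R$, these become weak inequalities at $(r^*,s^*)$, and the avoidance step from the previous paragraph upgrades them back to strict ones. Hence $\LL_{r^*,s^*}$ is a cutting line for $\tau$, and $c$ is its only point of contact with $\tau$.

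The principal obstacle I anticipate is making the first step rigorous, i.e.\ ensuring that the common boundary between $R$ and $R'$ really contains a positive-dimensional arc rather than only isolated intersection points of several hyperbolas. The local finiteness of the hyperbola arrangement and smoothness of each $\HH_{i,j}$ should reduce this to a standard transversality argument, but setting it up carefully is the delicate part of the proof.
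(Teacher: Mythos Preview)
The paper does not give its own proof of this lemma: it is quoted verbatim from \cite[Lemma~3.2]{Bergeron2023} and used as a black box, so there is nothing in the present paper to compare your argument against.

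That said, your outline is sound, and the change of variables you already use in step~2 removes the difficulty you flag in step~1. Under $(u,v)=(1/r,1/s)$ each hyperbola $\HH_{i,j}$ becomes the line $iu+jv=1$, and the regions $R,R'$ become the open convex polytopes
\[
P=\{(u,v): iu+jv<1\ \forall (i,j)\in\tau,\ iu+jv>1\ \forall (i,j)\notin\tau\},
\]
with $P'$ defined the same way for $\tau'$. These two systems of linear inequalities coincide except that the single constraint $au+bv<1$ is replaced by $au+bv>1$. Dropping that one constraint gives an open convex set $Q$ containing both $P$ and $P'$, and since $P$ and $P'$ are nonempty, $Q$ has points strictly on each side of the hyperplane $au+bv=1$; convexity then forces $Q\cap\{au+bv=1\}$ to be a nonempty open segment. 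That segment is precisely the common facet of $P$ and $P'$, which pulls back to the nondegenerate arc $\gamma\subset\HH_{a,b}$ you wanted. No separate transversality or local finiteness argument is needed.

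With step~1 secured this way, steps~2 and~3 go through exactly as you wrote them: the at-most-one intersection property of the hyperbolas lets you pick a generic point of $\gamma$, and the limit argument shows $\LL_{r^*,s^*}$ is a cutting line for $\tau$ meeting it only at~$c$.
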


\begin{lemma}[{\cite[Lemma 3.8]{Bergeron2023}}] \label{parallel_lines_removable_addable_cells}
    Let $\tau$ be a triangular partition with two removable and two addable cells. Then, the line containing the two removable cells is parallel to the line containing the two addable cells.
\end{lemma}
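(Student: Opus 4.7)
The plan is to work in the moduli space of cutting lines introduced in Subsection~\ref{subsection:bergeron}. Let $c_1=(a_1,b_1)$ and $c_2=(a_2,b_2)$ denote the two removable cells and $d_1=(a_1',b_1')$, $d_2=(a_2',b_2')$ the two addable cells. The region $R(\tau)\subset\bbR_{>0}^2$ consisting of all $(r,s)$ for which $\LL_{r,s}$ cuts off~$\tau$ is bounded by arcs of exactly four hyperbolas: $\HH_{c_1},\HH_{c_2}$ on the removable side and $\HH_{d_1},\HH_{d_2}$ on the addable side. First I would observe that the two removable arcs share a common endpoint $V_{\mathrm{rem}}\in\HH_{c_1}\cap\HH_{c_2}$, and the two addable arcs share a common endpoint $V_{\mathrm{add}}\in\HH_{d_1}\cap\HH_{d_2}$. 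By the defining equations of these hyperbolas, $\LL_{V_{\mathrm{rem}}}$ passes through both removable cells while $\LL_{V_{\mathrm{add}}}$ passes through both addable cells.

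Next I would reduce the desired parallelism to a collinearity condition. Since $\LL_{\lambda r,\lambda s}$ is a translate of $\LL_{r,s}$ for any $\lambda>0$, two cutting lines are parallel if and only if their parameter points lie on a common ray from the origin in $\bbR_{>0}^2$. In the dual coordinates $u=1/r$, $v=1/s$, each hyperbola $\HH_{a,b}$ becomes the affine line $au+bv=1$, and solving the two linear systems gives
\[V_{\mathrm{rem}}=\frac{1}{a_1b_2-a_2b_1}\bigl(b_2-b_1,\,a_1-a_2\bigr),\qquad V_{\mathrm{add}}=\frac{1}{a_1'b_2'-a_2'b_1'}\bigl(b_2'-b_1',\,a_1'-a_2'\bigr).\]
The lemma then reduces to the algebraic identity $(b_2-b_1)(a_1'-a_2')=(b_2'-b_1')(a_1-a_2)$, which asserts exactly that $V_{\mathrm{rem}}$, $V_{\mathrm{add}}$, and the origin are collinear.

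To establish this identity I would argue by a continuity argument in the moduli space. Starting at $V_{\mathrm{rem}}$ and moving outward along the ray from the origin through it, the corresponding cutting lines remain parallel to $\overline{c_1c_2}$ and translate away from the origin. Since every cell of~$\tau$ other than $c_1,c_2$ lies strictly below $\LL_{V_{\mathrm{rem}}}$, a sufficiently small outward displacement keeps the line a cutting line for~$\tau$, so the ray enters $R(\tau)$. It can exit $R(\tau)$ only through an addable arc: crossing a removable arc would take us into the region of a smaller partition, which lies on the origin side of the arc and is therefore unreachable along an outward ray, while crossing a non-addable hyperbola would put us in a region corresponding to a non-triangular partition, which by the arrangement bijection does not exist. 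The main obstacle, and the heart of the argument, is to show that this exit occurs at the vertex $V_{\mathrm{add}}$, and not in the interior of a single arc~$\HH_{d_i}$. I would rule out the interior-exit case by contradiction: if the ray exited through the interior of $\HH_{d_1}$, then inside the adjacent region $R(\tau\cup\{d_1\})$ the cell $d_1$ is removable, and an application of Lemma~\ref{line_touching_removable_cell} in that region, combined with the exact count of two addable cells for $\tau$ provided by Lemma~\ref{lem:removable-addable} and a careful analysis of how the arcs $\HH_{d_2}$ and the rest of $\partial R(\tau)$ fit around the would-be exit, produces a configuration inconsistent with there being precisely two addable cells of~$\tau$. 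Thus the exit occurs at $V_{\mathrm{add}}$, both $d_1$ and $d_2$ lie on the common cutting line $\LL_{V_{\mathrm{add}}}$ parallel to~$\overline{c_1c_2}$, and the lemma follows.
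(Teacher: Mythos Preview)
The paper does not itself prove this lemma; it is quoted from \cite{Bergeron2023} without argument, so there is no in-paper proof to compare against. What follows is an assessment of your attempt on its own merits.

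Your setup is correct and elegant. Passing to dual coordinates $(u,v)=(1/r,1/s)$ turns each $\HH_{a,b}$ into the affine line $au+bv=1$, the closure of $R(\tau)$ becomes a convex quadrilateral, and two cutting lines are parallel precisely when their parameter points lie on a common ray through the origin. Your computation of $V_{\mathrm{rem}}$ and $V_{\mathrm{add}}$, and the reduction of the lemma to the identity $(b_2-b_1)(a_1'-a_2')=(b_2'-b_1')(a_1-a_2)$, are right. The continuity step---moving outward from $V_{\mathrm{rem}}$ along its ray enters $R(\tau)$ and must exit through an addable arc---is also valid.

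The genuine gap is the last step. Exiting at $V_{\mathrm{add}}$ rather than at an interior point of $\HH_{d_1}$ is exactly the condition $d_1\cdot V_{\mathrm{rem}}=d_2\cdot V_{\mathrm{rem}}$, which unwinds to the very identity you are trying to prove; so the ray argument by itself is circular, and everything rests on the contradiction you sketch. But that sketch does not deliver a contradiction: there is nothing inconsistent about $d_1$ lying strictly closer than $d_2$ to the line through $c_1,c_2$ while both remain addable cells of~$\tau$ (each via a cutting line of a different slope). Invoking Lemma~\ref{line_touching_removable_cell} for $\tau\cup\{d_1\}$ and the count from Lemma~\ref{lem:removable-addable} gives you only that $d_1$ is removable in $\tau\cup\{d_1\}$ and that $\tau$ has two addable cells, facts already in hand; neither constrains the position of $d_2$ relative to the ray. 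The phrase ``a careful analysis of how the arcs \dots fit around the would-be exit'' is where the actual argument would need to live, and as written it is a placeholder rather than a proof. Completing this route requires an independent reason why the linear functional $x\mapsto V_{\mathrm{rem}}\cdot x$ is minimised over $\Conv(\bbN^2\setminus\tau)$ along the full edge $d_1d_2$ rather than at a single vertex---and that is essentially a restatement of the lemma.
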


Bergeron and Mazin considered the poset $\TYP$ of triangular partitions ordered by containment of their Young diagrams; equivalently, the restriction of Young's lattice to the subset of triangular partitions. 
The lower portion of the Hasse diagram of this poset appears in Figure~\ref{fig:hasse_diagram}. We write the order relation as $\tau\le\nu$, which by definition is equivalent to $\tau\subseteq\nu$, so we use both notations interchangeably.
The covering relations in $\TYP$ can be described as follows.

\begin{figure}[ht]
\centering
	\begin{tikzpicture}[scale=.85]
            \draw (0,0) -- (0,1) -- (-1,2) -- (-2,3) -- (-3,4) -- (-5,5) -- (-6,6);
            \draw (-5,5) -- (-4,6) -- (-3,5) -- (-3,4);
            \draw (-3,5) -- (-1,4) -- (-2,3);
            \draw (-1,4) -- (0,3) -- (-1,2);
            \draw (-3,5) -- (-2,6) -- (-1,5) -- (-1,4);
            \draw (-1,5) -- (0,6);
            \draw (0,1) -- (1,2) -- (2,3) -- (3,4) -- (5,5) -- (6,6);
            \draw (5,5) -- (4,6) -- (3,5) -- (3,4);
            \draw (3,5) -- (1,4) -- (2,3);
            \draw (1,4) -- (0,3) -- (1,2);
            \draw (3,5) -- (2,6) -- (1,5) -- (1,4);
            \draw (1,5) -- (0,6);
		\filldraw (0,0) circle (1.5pt) node[right]{$\epsilon$};
            \filldraw (0,1) circle (1.5pt) node[right]{$1$};
            \filldraw (-1,2) circle (1.5pt) node[left]{$11$};
            \filldraw (1,2) circle (1.5pt) node[right]{$2$};
            \filldraw (-2,3) circle (1.5pt) node[left]{$111$};
            \filldraw (0,3) circle (1.5pt) node[right]{$21$};
            \filldraw (2,3) circle (1.5pt) node[right]{$3$};
            \filldraw (-3,4) circle (1.5pt) node[left]{$1111$};
            \filldraw (-1,4) circle (1.5pt) node[left]{$211$};
            \filldraw (1,4) circle (1.5pt) node[right]{$31$};
            \filldraw (3,4) circle (1.5pt) node[right]{$4$};
            \filldraw (-5,5) circle (1.5pt) node[left]{$11111$};
            \filldraw (-3,5) circle (1.5pt) node[left]{$2111$};
            \filldraw (-1,5) circle (1.5pt) node[left]{$221$};
            \filldraw (1,5) circle (1.5pt) node[right]{$32$};
            \filldraw (3,5) circle (1.5pt) node[right]{$41$};
            \filldraw (5,5) circle (1.5pt) node[right]{$5$};
            \filldraw (-6,6) circle (1.5pt) node[left]{$111111$};
            \filldraw (-4,6) circle (1.5pt) node[left]{$21111$};
            \filldraw (-2,6) circle (1.5pt) node[left]{$2211$};
            \filldraw (0,6) circle (1.5pt) node[right]{$321$};
            \filldraw (2,6) circle (1.5pt) node[right]{$42$};
            \filldraw (4,6) circle (1.5pt) node[right]{$51$};
            \filldraw (6,6) circle (1.5pt) node[right]{$6$};
	\end{tikzpicture}
\caption{The lower portion of the Hasse diagram of $\TYP$.}
\label{fig:hasse_diagram}
\end{figure}

\begin{lemma}[{\cite[Lemma 4.2]{Bergeron2023}}]
    Let $\tau,\nu\in\TYP$ such that $\tau<\nu$. Then, $\tau\lessdot\nu$ if and only if $\tau$ is obtained from $\nu$ by removing exactly one cell. In particular, $\TYP$ is ranked by the size of the partitions.
\end{lemma}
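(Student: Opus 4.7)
My plan is to dispatch the ``if'' direction immediately: if $\nu$ is obtained from $\tau$ by adding exactly one cell, then no partition, triangular or otherwise, can lie strictly between them in Young's lattice, so $\tau\lessdot\nu$ in $\TYP$ as well. All of the content is in the ``only if'' direction, and I will argue its contrapositive: whenever $\tau\subsetneq\nu$ are triangular with $|\nu|-|\tau|\ge 2$, there exists a triangular $\mu$ strictly between them.

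The key step is to straighten the Bergeron--Mazin hyperbolas by passing to the reciprocal coordinates $(u,v)=(1/r,1/s)$ on the moduli space. In these coordinates the cutting line $\LL_{r,s}$ is $ux+vy=1$, so each hyperbola $\HH_{a,b}$ becomes the affine line $\{(u,v):au+bv=1\}$ and each region $R_\lambda$ is an intersection of open half-planes, i.e., an open convex polygon in $\bbR_{>0}^2$. Consequently the set
\[ M=\bigl\{(u,v)\in\bbR_{>0}^2 : au+bv<1\text{ for every }(a,b)\in\tau,\ au+bv>1\text{ for every }(a,b)\in\bbN^2\setminus\nu\bigr\} \]
of parameters whose cutting line yields a partition $\mu$ with $\tau\subseteq\mu\subseteq\nu$ is also an intersection of open half-planes, and in particular convex. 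Because $\tau\subseteq\nu$, any interior point $(u_\tau,v_\tau)$ of $R_\tau$ and any interior point $(u_\nu,v_\nu)$ of $R_\nu$ belong to $M$.

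I then take the straight segment from $(u_\tau,v_\tau)$ to $(u_\nu,v_\nu)$, which by convexity lies entirely in $M$. Inside $M$ the only lines the segment can cross are $\{au+bv=1\}$ with $(a,b)\in\nu\setminus\tau$, and for each such cell the affine function $au+bv$ evaluates to $>1$ at the start and to $<1$ at the end, so it crosses $1$ exactly once. After a small generic perturbation of the endpoints within the open regions $R_\tau$ and $R_\nu$, these $|\nu\setminus\tau|$ crossings occur at distinct parameter values, so the segment visits a sequence of regions representing triangular partitions $\tau=\mu_0\subsetneq\mu_1\subsetneq\cdots\subsetneq\mu_k=\nu$ with consecutive terms differing by one cell and $k=|\nu\setminus\tau|\ge 2$. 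The partition $\mu_1$ is the desired intermediate. The ``in particular'' statement then follows because every covering adds exactly one cell, identifying the size of a partition with its rank.

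The step I expect to need the most care is the genericity argument: two lines $au+bv=1$ and $a'u+b'v=1$ intersect in at most one point, and the set of endpoint pairs forcing the segment through any such intersection is a countable union of codimension-one subvarieties of $R_\tau\times R_\nu$, so a generic pair in this open set avoids them all. Making this precise, for instance by restricting to a small convex neighborhood of the initial endpoints and using a standard measure/dimension argument, is routine but must be spelled out to guarantee the transverse and pairwise distinct crossings relied on above.
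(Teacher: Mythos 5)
Your proof is correct. One thing to note at the outset: this lemma is stated in the paper as a cited result (Lemma 4.2 of Bergeron and Mazin), so the paper contains no proof of its own to compare against — it simply imports the statement. Your argument is a legitimate self-contained proof, and it works squarely within the framework the paper recalls in Section 2.3: passing to reciprocal coordinates $(u,v)=(1/r,1/s)$ turns each hyperbola $\HH_{a,b}$ into the line $au+bv=1$, the regions into open convex sets, and your set $M$ into a convex set containing both $R_\tau$ and $R_\nu$; the segment argument then makes rigorous the heuristic "crossing $\HH_{a,b}$ adds or removes the cell $(a,b)$", showing that any two nested triangular partitions are joined by a saturated chain adding one cell at a time, which gives both directions of the lemma and the rank statement. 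The details check out: for $(a,b)\in\nu\setminus\tau$ the affine function $au+bv$ is strictly monotone along the segment and crosses $1$ exactly once, cells of $\tau$ and of $\bbN^2\setminus\nu$ never change status inside $M$, and every point of the segment off the finitely many relevant lines lies in a region of the moduli space, hence corresponds to a triangular partition sandwiched between $\tau$ and $\nu$. Your genericity step is also fine, and in fact easier than you suggest: only the finitely many lines indexed by $\nu\setminus\tau$ matter, so the bad locus of endpoint pairs is a finite union of measure-zero sets, and perturbing one endpoint within the open set $R_\nu$ already suffices to separate the crossings. This is close in spirit to the original Bergeron–Mazin argument via the moduli space of lines, with the reciprocal-coordinate linearization being a clean way to get the convexity you need.
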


The following property follows from the above description of the moduli space of lines.

\begin{lemma}[{\cite[Corollary 4.1, Lemma 4.4]{Bergeron2023}}]
The poset $\TYP$ has a planar Hasse diagram, and it is a lattice.
\end{lemma}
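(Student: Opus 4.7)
The plan is to exploit the moduli-space picture of cutting lines recalled above. For planarity, I would first observe that the arrangement $\{\HH_{a,b}:(a,b)\in\bbN^2\}$ is locally finite in $\bbR_{>0}^2$: the hyperbola $\HH_{a,b}$ is asymptotic to the lines $r=a$ and $s=b$, so only finitely many $\HH_{a,b}$ cross any bounded region of the positive quadrant. Consequently $\bbR_{>0}^2\setminus\bigcup \HH_{a,b}$ is a disjoint union of open regions which the correspondence from \cite{Bergeron2023} identifies with $\Delta$. Two regions share a boundary arc lying on $\HH_{a,b}$ if and only if their corresponding partitions differ by the single cell $(a,b)$, which is exactly the covering relation in $\TYP$. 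Placing one vertex inside each region and drawing each covering edge as a short arc crossing the shared boundary then yields a planar embedding of the Hasse diagram; since moving across $\HH_{a,b}$ toward larger $(r,s)$ adds the cell $(a,b)$, after a suitable isotopy the rank (partition size) can be made monotone in the vertical direction, producing a genuine Hasse-diagram drawing.

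For the lattice property, given $\tau,\nu\in\TYP$ I would first show that common upper bounds exist by taking any pair of cutting lines, one for each partition, and translating them outward until the resulting triangle contains both Young diagrams. Because $\TYP$ is ranked by size, the non-empty set of common upper bounds contains minimal elements, and the dual argument (applied inside $\tau\cap\nu$) produces maximal common lower bounds. To promote existence to uniqueness I would establish a local \emph{diamond property}: whenever a triangular partition $\rho$ has two distinct addable cells $c_1,c_2$, the partition $\rho\cup\{c_1,c_2\}$ is again triangular, and dually for two removable cells. Geometrically, the two addable cells correspond to two arcs on $\HH_{c_1}$ and $\HH_{c_2}$ on the upper boundary of $\rho$'s region; these arcs meet at a vertex of the arrangement, and the region immediately across that vertex corresponds to $\rho\cup\{c_1,c_2\}$. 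Lemma~\ref{parallel_lines_removable_addable_cells} (parallelism of the line through the addable cells and the line through the removable cells of the resulting partition), together with the bound of at most two addable/removable cells from Lemma~\ref{lem:removable-addable}, gives the structural input needed to identify this region unambiguously. Once the local diamond property is in hand, unique joins and meets of arbitrary $\tau,\nu$ are obtained by induction on rank difference, repeatedly closing diamonds above $\tau\cap\nu$ (for the meet) and below a minimal common upper bound (for the join).

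The main obstacle I anticipate is the local diamond property itself. One must verify that at the common endpoint of the two upper-boundary arcs of $\rho$'s region no third hyperbola intervenes, so that a single neighboring region (rather than several) appears across the corner; otherwise crossing both $\HH_{c_1}$ and $\HH_{c_2}$ might not produce a triangular partition at all. Establishing this requires a careful local analysis of the hyperbola arrangement near each vertex of a region, leveraging the cardinality restrictions of Lemma~\ref{lem:removable-addable} to rule out degenerate configurations. All other pieces of the proof — local finiteness, existence of common bounds, and the inductive assembly of joins and meets from local diamonds — are comparatively routine once this local picture is pinned down.
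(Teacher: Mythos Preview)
Your planarity argument via the moduli space of lines is essentially the route taken in \cite{Bergeron2023}, and it is fine.

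The genuine gap is the local diamond property on which your lattice argument rests: it is simply false. Take $\rho=21$. Its complementary corner cells are $(3,1)$, $(2,2)$, $(1,3)$; of these, $(2,2)$ is not addable (since $22$ is not triangular), so the two addable cells are $c_1=(3,1)$ and $c_2=(1,3)$. But $\rho\cup\{c_1,c_2\}=311$ is \emph{not} triangular: the points $(3,1)$ and $(1,3)$ lie on the line $x+y=4$, which also contains $(2,2)\notin 311$, so no line can separate $311$ from its complement. In the Hasse diagram one has $31\vee 211=321$, and the interval $[21,321]$ is a hexagon, not a square. This is exactly the phenomenon recorded after Definition~\ref{def:diagonal}: the faces of the Hasse diagram of $\TYP$ are $2k$-gons, not diamonds. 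Your worry about ``a third hyperbola intervening'' at the corner of the region is thus not a technicality to be ruled out; it is the generic behaviour (the vertex corresponds to the line through $c_1,c_2$, and every lattice point on that line contributes a hyperbola through the vertex).

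The fix is to abandon diamonds and instead argue, as Bergeron--Mazin do and as the present paper notes, that the lattice property follows from planarity itself: once you know the Hasse diagram is (upward) planar with a minimum $\epsilon$, every finite interval $[\epsilon,\tau]$ is a finite planar poset with $\hat 0$ and $\hat 1$, hence a lattice by a classical result; since any two elements have a common upper bound (a large staircase), joins and meets exist globally. Alternatively, one may bypass this entirely and prove the lattice property directly by the explicit construction of $\tau\vee\nu$ and $\tau\wedge\nu$ given later in Proposition~\ref{triang_join_meet_construction}.
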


We conclude this section with some definitions from~\cite{Bergeron2023} that will be useful later on.

\begin{definition}\label{def:diagonal}
The \emph{diagonal} of a triangular partition $\tau$, denoted by $\partial_\tau$, is the set of cells in the segment whose endpoints are the removable cells of $\tau$ (with the convention that if there is only one removable cell, then the diagonal is just that cell).
The slope of this line will be called the \emph{diagonal slope}.
The \emph{interior} of $\tau$ is $\tau^\circ = \tau\setminus\partial_\tau$.
\end{definition}

It is shown in~\cite{Bergeron2023} that if $\tau$ is a triangular partition, then so is $\tau^\circ$. Additionally, if $\partial_\tau$ contains $k\geq2$ cells, then the Hasse diagram of the interval $[\tau^\circ,\tau]$ is a polygon with $2k$ sides. It follows that the Hasse diagram of $\TYP$ tiles a region of the plane with $2k$-gons for $k\ge2$.

\section{Characterizations of triangular partitions} \label{sec:characterizations_triangular}

Bergeron and Mazin's~\cite{Bergeron2023} characterization of triangular partitions, given in Lemma~\ref{charact_bergeron} above, requires computing some quotients of arm and leg lengths for all the cells in the partition. In this section, we introduce an alternative and arguably simpler characterization of triangular partitions in terms of convex hulls, along with various ways to identify removable and addable cells. We then use these to describe an algorithm which determines if an integer partition is triangular and finds its removable and addable cells.

The convex hull of a set $S\subseteq\bbN^2$ will be denoted by $\Conv(S)$. See Figure~\ref{fig:conv} for an illustration of the next proposition.

\begin{figure}[ht]
\centering
	\begin{tikzpicture}[scale=.7]
	    \draw[gray, pattern color = gray, pattern=crosshatch] (1,6) -- (2,5) -- (7,2) -- (9,1) -- (10,1) -- (10,7) -- (1,7) -- (1,6);
		\filldraw[black, fill=yellow, semithick] (0,0) -- (0,5) -- (1,5) -- (1,4) -- (3,4) -- (3,3) -- (5,3) -- (5,2) -- (6,2) -- (6,1) -- (8,1) -- (8,0) -- (0,0);
        \draw[gray, pattern color = gray, pattern=crosshatch] (1,1) -- (1,5) -- (5,3) -- (8,1) -- (1,1);
        \foreach \i in {1,...,10} 
            {\foreach \j in {1,...,7}  {\filldraw[black, fill=white] (\i,\j) circle (2pt); }}
        \foreach \c in {(1,1),(2,1),(3,1),(4,1),(5,1),(6,1),(7,1),(8,1),(1,2),(2,2),(3,2),(4,2),(5,2),(6,2),(1,3),(2,3),(3,3),(4,3),(1,4),(2,4),(3,4),(1,5)} 
            {\filldraw[black] \c circle (2pt); }
        \axes{10}{7}
        \draw[green, thick] (0,6.15) -- (10,0);
	\end{tikzpicture}
\caption{The convex hulls of $86531$ and its complement.}
\label{fig:conv}
\end{figure}
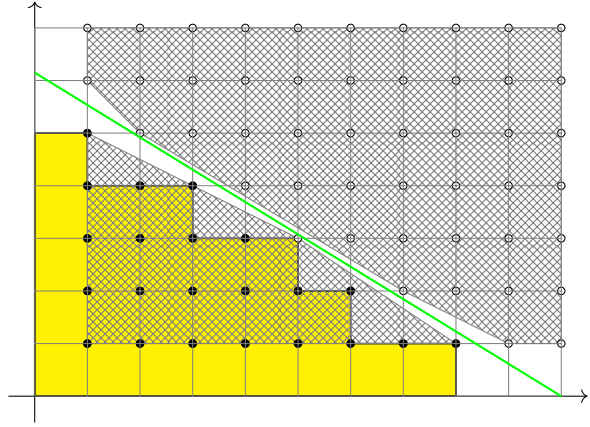

\begin{proposition}
\label{charact_triang2D}
A partition $\lambda$ is triangular if and only if $\Conv(\lambda)\cap\Conv(\bbN^2\setminus\lambda) = \emptyset$.
\end{proposition}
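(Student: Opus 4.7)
The plan is to prove the two implications separately, with the forward direction being essentially a tautology and the reverse requiring a separation theorem.

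For the forward direction, assume $\lambda$ is triangular with cutting line $\LL_{r,s}$. By the definition of triangularity, every cell $(a,b)\in\lambda$ satisfies $a/r+b/s\le 1$, while every cell $(a,b)\in\bbN^2\setminus\lambda$ satisfies $a/r+b/s>1$ (the strict inequality on the complement comes from the floor function in $\tau_j=\lfloor r-jr/s\rfloor$, together with the choice of $k=\lfloor s-s/r\rfloor$). Thus $\lambda$ and $\bbN^2\setminus\lambda$ lie in the disjoint convex half-planes $H_1=\{x/r+y/s\le 1\}$ and $H_2=\{x/r+y/s>1\}$, respectively, and since $H_1,H_2$ are convex, the same inclusions hold for $\Conv(\lambda)$ and $\Conv(\bbN^2\setminus\lambda)$, forcing their disjointness.

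For the reverse direction, suppose $\Conv(\lambda)\cap\Conv(\bbN^2\setminus\lambda)=\emptyset$; the empty-partition case is trivial, so assume $\lambda$ is non-empty. Then $\Conv(\lambda)$ is a compact convex polygon, while $\Conv(\bbN^2\setminus\lambda)$ is a closed convex polyhedral region: this can be seen by writing it as $\Conv(B)+\bbR_{\ge 0}^2$, where $B$ is the finite set of extremal complement cells along the lower-left staircase of $\bbN^2\setminus\lambda$, i.e., a Minkowski sum of a compact polytope and a closed convex cone. Applying the strict separation theorem in $\bbR^2$ to these two disjoint convex sets (one compact, one closed) yields a line $L:\alpha x+\beta y=\gamma$ with $\Conv(\lambda)\subseteq\{\alpha x+\beta y<\gamma\}$ and $\Conv(\bbN^2\setminus\lambda)\subseteq\{\alpha x+\beta y>\gamma\}$. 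Since $\bbN^2\setminus\lambda$ contains cells with arbitrarily large $x$- and $y$-coordinates, both $\alpha$ and $\beta$ must be positive; since $(1,1)\in\lambda$, we obtain $\gamma>\alpha+\beta>0$. Setting $r=\gamma/\alpha$ and $s=\gamma/\beta$ rewrites $L$ as $\LL_{r,s}$, and the strict separation immediately gives $\lambda=\{(a,b)\in\bbN^2:a/r+b/s\le 1\}$, which certifies that $\lambda$ is triangular via Definition~\ref{def:triangular}.

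The main obstacle, though more technical than conceptual, is justifying that $\Conv(\bbN^2\setminus\lambda)$ is closed so that the strict form of the separation theorem is available; weak separation alone would allow the dividing line to pass through a lattice point of the complement, which would be incompatible with being a cutting line. The Minkowski-sum description handles this directly, but an alternative is to start from any weakly separating line produced by the general separation theorem and perturb it infinitesimally to push off the finitely many complement cells potentially lying on it, which is possible because only finitely many lattice points of $\bbN^2\setminus\lambda$ are ever candidates to be ``on'' a supporting line of $\Conv(\lambda)$.
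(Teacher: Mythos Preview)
Your proof is correct and follows essentially the same approach as the paper's: both directions are handled identically, with the reverse implication using the hyperplane separation theorem applied to a compact convex set and a disjoint closed convex set. If anything, you are more careful than the paper, which simply asserts that ``convex hulls are closed sets'' (false in general) without the Minkowski-sum justification you supply for $\Conv(\bbN^2\setminus\lambda)$, and you also verify explicitly that the separating line has positive axis intercepts so that it can be written as $\LL_{r,s}$.
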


\begin{proof}
If $\lambda$ is triangular, there exist $r,s\in\bbR_{>0}$ so that all the points in $\lambda$ lie on or below the line $\LL_{r,s}$, and all the points in $\bbN^2\setminus\lambda$ lie above this line. It follows that all the points in 
$\Conv(\lambda)$ must lie on or below $\LL_{r,s}$, and all the points in $\Conv(\bbN^2\setminus\lambda)$ must lie above this line. We conclude that the intersection of these two convex hulls is empty.

To prove the converse, suppose that $\Conv(\lambda)\cap\Conv(\bbN^2\setminus\lambda) = \emptyset$ and that $\lambda$ is not empty. Convex hulls are closed sets, and $\Conv(\lambda)$ is bounded, hence it is compact. By the hyperplane separation theorem, two disjoint nonempty closed convex sets, one of which is compact, have a hyperplane separating them. Therefore, there exists a line separating $\lambda$ from its complement, which means that $\lambda$ is triangular.
\end{proof}

In the rest of this section, the term {\em vertex} is used in the sense of a $0$-dimensional face of a polygon; in particular, $\Conv(\tau)$ may have lattice points in its boundary that are not vertices.

If 
$\Conv(\lambda)\cap\Conv(\bbN^2\setminus\lambda)\neq\emptyset$, then one of these two convex hulls must have a vertex that belongs to the other convex hull. Thus, Proposition~\ref{charact_triang2D} is equivalent to the following statement, which will be useful in Section~\ref{sec:triangular_young_poset}.

\begin{lemma}\label{lemma_triangular_iff_vertex_on_cv}
A partition $\lambda$ is triangular if and only if no vertex of $\Conv(\lambda)$ belongs to $\Conv(\bbN^2\setminus\lambda)$ and no vertex of $\Conv(\bbN^2\setminus\lambda)$ belongs to $\Conv(\lambda)$.
\end{lemma}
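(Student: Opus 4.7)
The forward direction is immediate from Proposition~\ref{charact_triang2D}: if $\lambda$ is triangular, then $\Conv(\lambda)$ and $\Conv(\bbN^2\setminus\lambda)$ are disjoint, and in particular neither contains a vertex of the other.

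For the converse, I would prove the contrapositive. Let $A=\Conv(\lambda)$ and $B=\Conv(\bbN^2\setminus\lambda)$, and suppose $A\cap B\neq\emptyset$. Parameterize the upper chain of $A$ as a concave piecewise-linear function $f\colon[1,\lambda_1]\to\bbR$ whose breakpoints are the lattice vertices of $A$ on that chain (all lying in $\lambda$), and restrict the lower chain of $B$ to the same interval as a convex piecewise-linear function $g$ whose breakpoints are lattice vertices of $B$ (all lying in $\bbN^2\setminus\lambda$). The boundary values $f(1)=k$ and $f(\lambda_1)=1$, together with the fact that $(1,k+1)$ and $(\lambda_1+1,1)$ are lattice points of $\bbN^2\setminus\lambda$, force $g(1)\ge k+1>f(1)$ and $g(\lambda_1)>1=f(\lambda_1)$.

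The function $h:=f-g$ is concave as a difference of a concave and a convex function, and is strictly negative at both endpoints of $[1,\lambda_1]$. The hypothesis $A\cap B\neq\emptyset$ forces $h(x)\ge 0$ for some $x\in(1,\lambda_1)$, so the maximum of the concave piecewise-linear function $h$ is nonnegative and attained at an interior breakpoint $x^*$. This breakpoint is either a vertex of the upper chain of $A$ or of the lower chain of $B$. In the first case, the lattice point $u=(x^*,f(x^*))\in\lambda$ is a vertex of $A$ satisfying $f(x^*)\ge g(x^*)$, so $u$ lies on or above the lower boundary of $B$, whence $u\in B$. In the second case, the lattice point $v=(x^*,g(x^*))\in\bbN^2\setminus\lambda$ is a vertex of $B$ lying on or below the upper chain of $A$, whence $v\in A$. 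Combined with Proposition~\ref{charact_triang2D}, this completes the proof.

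The main obstacle I anticipate is careful bookkeeping around degenerate cases, namely when $\lambda$ consists of a single row or column (so one of the parameterizing intervals degenerates to a point). Such $\lambda$ are trivially triangular, so the contrapositive does not reach them, but these cases should be excluded explicitly at the outset. A secondary subtlety arises when the maximum of $h$ is attained on a flat edge rather than at an isolated breakpoint, but both endpoints of such an edge are breakpoints, so at least one of them furnishes a vertex of $A$ or $B$ to which the main argument applies.
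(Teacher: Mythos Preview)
Your approach is considerably more detailed than the paper's, which merely asserts in one line that if the two convex hulls intersect then one of them must have a vertex in the other, and declares the lemma equivalent to Proposition~\ref{charact_triang2D}. Since that assertion is \emph{false} for arbitrary pairs of convex polygons (two rectangles crossing like a plus sign intersect without either containing a vertex of the other), some argument using the special structure of $\Conv(\lambda)$ and $\Conv(\bbN^2\setminus\lambda)$ really is needed. Your strategy of comparing the upper boundary $f$ of $A$ with the lower boundary $g$ of $B$ via the concave function $h=f-g$ supplies exactly this, and is a nice way to do it.

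There is, however, a genuine gap at the right endpoint. You claim $f(\lambda_1)=1$, but this holds only when $\lambda_1>\lambda_2$. If the first part has multiplicity $m\ge 2$, the topmost cell in column $\lambda_1$ is $(\lambda_1,m)$, so $f(\lambda_1)=m$. In that case $h(\lambda_1)$ need not be negative: for $\lambda=(2,2)$ one gets $f\equiv 2$ and $g(x)=4-x$ on $[1,2]$, so $h(2)=0$, and the maximum of $h$ is attained at the right endpoint rather than at an interior breakpoint. Your argument, which locates the maximum strictly inside $(1,\lambda_1)$, does not cover this.

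The repair is short. The point $(\lambda_1,f(\lambda_1))$ is always a vertex of $\Conv(\lambda)$: it is the unique cell of $\lambda$ with maximal $y$ among those with maximal $x$, hence extreme. So if $h(\lambda_1)\ge 0$, this vertex already lies in $\Conv(\bbN^2\setminus\lambda)$ and you are done; otherwise $h(\lambda_1)<0$ and your interior-breakpoint argument proceeds unchanged. (The left endpoint is fine as stated: the only points of $\bbN^2\setminus\lambda$ with $x=1$ have $y\ge k+1$, so $g(1)=k+1$ and $h(1)=-1<0$.) With this one-line case split added, the proof is complete.
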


\subsection{Finding removable and addable cells}\label{sec:finding-addable-removable}

This subsection gives characterizations of removable and addable cells of a triangular partition.

\begin{lemma}
\label{lemma_vertices}
In any triangular partition $\tau$, its removable cells must be vertices of $\Conv(\tau)$, and its addable cells must be vertices of $\Conv(\bbN^2\setminus\tau)$.
\end{lemma}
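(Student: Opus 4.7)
The plan is a direct proof by contradiction, leveraging Proposition~\ref{charact_triang2D} (the convex-hull characterization of triangularity). The key elementary fact I will use throughout is: if $S \subseteq \bbR^2$ and $c \in S$ is not a vertex of $\Conv(S)$, then $c \in \Conv(S \setminus \{c\})$. For $S = \tau$ finite this is immediate (a non-extreme point of a polytope is a convex combination of the other vertices), and for the infinite set $S = \bbN^2\setminus\tau$ it follows by a short argument: writing $c = \lambda p + (1-\lambda)q$ with $p,q \in \Conv(S)\setminus\{c\}$, expressing $p,q$ via Carathéodory as convex combinations of points of $S$, and absorbing any unwanted $c$-contributions on both sides.

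For the removable case, let $c$ be a removable cell of $\tau$ and set $\tau' = \tau \setminus \{c\}$. By definition $\tau'$ is triangular, so Proposition~\ref{charact_triang2D} gives $\Conv(\tau') \cap \Conv(\bbN^2\setminus\tau') = \emptyset$. Since $c \in \bbN^2\setminus\tau'$, we have $c \in \Conv(\bbN^2\setminus\tau')$. Now suppose toward a contradiction that $c$ is not a vertex of $\Conv(\tau)$. By the elementary fact above applied to $S = \tau$, we get $c \in \Conv(\tau\setminus\{c\}) = \Conv(\tau')$. Hence $c \in \Conv(\tau') \cap \Conv(\bbN^2\setminus\tau')$, contradicting triangularity of $\tau'$. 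Therefore $c$ must be a vertex of $\Conv(\tau)$.

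For the addable case, the argument is completely analogous with the roles of $\tau$ and its complement swapped. Let $c$ be an addable cell and set $\tau' = \tau \cup \{c\}$, which is triangular; then Proposition~\ref{charact_triang2D} again gives $\Conv(\tau') \cap \Conv(\bbN^2\setminus\tau') = \emptyset$. Here $c \in \tau' \subseteq \Conv(\tau')$. If $c$ were not a vertex of $\Conv(\bbN^2\setminus\tau)$, then by the elementary fact applied to $S = \bbN^2\setminus\tau$, we would obtain $c \in \Conv((\bbN^2\setminus\tau)\setminus\{c\}) = \Conv(\bbN^2\setminus\tau')$, contradicting the disjointness of the two hulls. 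So $c$ is a vertex of $\Conv(\bbN^2\setminus\tau)$.

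The only conceptual obstacle is the handling of the infinite set $\bbN^2\setminus\tau$, since Krein--Milman does not apply to unbounded convex sets and "not a vertex implies a convex combination of remaining points" is not quite a one-liner there. I expect to dispatch this with the short bookkeeping computation sketched in the first paragraph, or alternatively to sidestep it entirely by observing that for a partition $\tau$ the convex hull $\Conv(\bbN^2\setminus\tau)$ equals $\Conv(V) + \bbR_{\geq 0}^2$ for a finite vertex set $V \subseteq \bbN^2\setminus\tau$, which reduces the statement to the finite case. Everything else in the proof is formal.
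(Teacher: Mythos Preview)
Your argument is correct, but it takes a more roundabout route than the paper. The paper proves this lemma directly from the definition of triangularity: if $c$ is removable, take a cutting line $L$ for $\tau\setminus\{c\}$; then $c$ is the unique cell of $\tau$ lying strictly above $L$, so $L$ (slightly shifted) is a supporting line for $\Conv(\tau)$ touching only $c$, which makes $c$ a vertex. The addable case is symmetric. This avoids Proposition~\ref{charact_triang2D} entirely and, in particular, sidesteps the bookkeeping you needed for the infinite set $\bbN^2\setminus\tau$, since a single separating line immediately exhibits $c$ as an extreme point. Your approach has the virtue of showing how the convex-hull characterization can be used as a black box, but here the direct cutting-line argument is shorter and requires no Carath\'eodory-type manipulation.
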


\begin{proof}
Suppose that $c\in\tau$ is removable, and let $L$ be a cutting line of $\tau\setminus\{c\}$. Then $c$ is the only cell of $\tau$ that lies above $L$, which implies that $c$ is a vertex of $\Conv(\tau)$. 

Similarly, if $c'\in\bbN\setminus\tau$ is addable, let $L'$ be a cutting line of $\tau\cup\{c'\}$. Then $c'$ is the only cell of $\bbN\setminus\tau$ that lies weakly below $L'$, which implies that $c'$ is a vertex of $\Conv(\bbN\setminus\tau)$.
\end{proof}

\begin{proposition}
\label{charact_trcp_tacp}
Two cells in a triangular partition $\tau$ are removable if and only if they are consecutive vertices of $\Conv(\tau)$ and the line passing through them does not intersect $\Conv(\bbN^2\setminus\tau)$.
Similarly, two cells in $\bbN\setminus\tau$ are addable if and only if they are consecutive vertices of $\Conv(\bbN^2\setminus\tau)$ and the line passing through them does not intersect $\Conv(\tau)$.
\end{proposition}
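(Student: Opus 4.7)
My plan is to prove the statement for removable cells; the addable statement follows by a parallel argument with the roles of $\tau$ and $\bbN^2\setminus\tau$ (and of ``below'' and ``above'' a cutting line) interchanged. For the forward direction, suppose $c_1,c_2\in\tau$ are both removable. By Lemma~\ref{lemma_vertices}, both are already vertices of $\Conv(\tau)$, so the crux is to exhibit a single cutting line $L$ of $\tau$ passing through both of them. Once such an $L$ is in hand, the two conclusions are immediate: since $\tau$ lies in the closed half-plane below $L$, the segment $c_1c_2\subseteq L$ is an edge of $\Conv(\tau)$, making $c_1$ and $c_2$ consecutive vertices; and since $\bbN^2\setminus\tau$ lies strictly in the open half-plane above $L$, so does $\Conv(\bbN^2\setminus\tau)$, which gives $L\cap\Conv(\bbN^2\setminus\tau)=\emptyset$.

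To produce the common cutting line $L$ I would appeal to the moduli-space picture recalled in Section~\ref{subsection:bergeron}. The cell of $\tau$ in $\bbR_{>0}^2$ is bounded by arcs of the hyperbolas $\HH_{a,b}$ corresponding to its removable and addable cells, with the removable arcs facing the origin and the addable arcs facing away; hence the two removability arcs $\HH_{c_1}$ and $\HH_{c_2}$ are adjacent along this boundary and share a corner, at which the line $\LL_{r,s}$ passes simultaneously through $c_1$ and $c_2$ while still cutting off $\tau$. Conversely, if $c_1,c_2$ are consecutive vertices of $\Conv(\tau)$ and the line $L$ through them is disjoint from $\Conv(\bbN^2\setminus\tau)$, then $L$ supports $\Conv(\tau)$ along the edge $c_1c_2$, so $\tau$ lies in one closed half-plane bounded by $L$; because $\Conv(\bbN^2\setminus\tau)$ is convex, disjoint from $L$, and contains points arbitrarily far from the origin, it must lie in the opposite open half-plane, so $L$ is a cutting line of $\tau$. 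The point $(r,s)$ representing $L$ then sits at a corner $\HH_{c_1}\cap\HH_{c_2}$ of the moduli-space cell of $\tau$, and crossing the arc $\HH_{c_1}$ moves $(r,s)$ into the cell of $\tau\setminus\{c_1\}$, witnessing that this partition is triangular and hence that $c_1$ is removable; the symmetric argument for $c_2$ completes the proof.

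The main obstacle is the merging step in the forward direction, namely turning the two individual cutting lines provided by Lemma~\ref{line_touching_removable_cell} (one touching each of $c_1$ and $c_2$) into a single cutting line through both. The moduli-space picture handles this cleanly because the cell of $\tau$ is a curved polygon whose boundary structure exactly encodes addable/removable cells; a more elementary route would interpolate slopes inside the open cone of slope vectors from Lemma~\ref{charact_bergeron} until the line meets both cells, but then one still has to rule out undesired lattice points landing on the line, which makes the moduli-space argument the cleanest.
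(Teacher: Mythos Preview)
Your approach is genuinely different from the paper's: you route the forward direction through the Bergeron--Mazin moduli space to produce a single cutting line through both removable cells, whereas the paper argues directly by contradiction that $c_1,c_2$ must be consecutive vertices and that no point of $\bbN^2\setminus\tau$ can lie weakly below the line through them. Once you have a common cutting line your deductions are correct and clean.

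The weak point is the step you flag yourself. You assert that the two removable arcs $\HH_{c_1}$ and $\HH_{c_2}$ on the boundary of the moduli-space cell of $\tau$ are \emph{adjacent and share a corner}. This is true, but it does not follow merely from ``removable arcs face the origin, addable arcs face away'': in the $2$-removable/$2$-addable case one must still exclude an alternating pattern $R_1,A_1,R_2,A_2$ around the boundary. The cleanest justification is that each region $\{(r,s):\text{above }\HH_c\}$ is convex, so the intersection $R=\bigcap_{c\in\tau}\{\text{above }\HH_c\}$ is convex and its boundary (the upper envelope of $\HH_{c_1},\HH_{c_2}$) is a single connected curve on which the two arcs meet at the unique point $\HH_{c_1}\cap\HH_{c_2}$; the lower boundary of the cell of $\tau$ is a connected arc of $\partial R$ and must contain this meeting point since both $c_1$ and $c_2$ are removable. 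Alternatively, cite the relevant structural lemma from \cite{Bergeron2023} explicitly. As written, your ``hence'' is doing work that Section~\ref{subsection:bergeron} of this paper does not supply.

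For the backward direction, your argument and the paper's are essentially the same perturbation, yours phrased as ``cross the arc $\HH_{c_1}$'' and the paper's as ``tilt the slope by $\delta$ and lower by $\varepsilon$''. One caveat: at the corner $(r,s)$, other lattice points on the segment $c_1c_2$ put additional hyperbolas through $(r,s)$, so you should say that you first slide along $\HH_{c_1}$ to a generic boundary point before crossing (which is exactly the paper's choice of small~$\delta$). The paper's version has the advantage of being self-contained; yours has the advantage of explaining \emph{why} the common cutting line exists rather than verifying it post hoc.
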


\begin{figure}[ht]
\centering
	\begin{tikzpicture}[scale=.7]
	    
	    \draw[gray, pattern color = gray, pattern=crosshatch] (1,6) -- (3,4) -- (6,2) -- (8,1) -- (9,1) -- (9,7) -- (1,7) -- (1,6);
		\filldraw[color=black, fill=yellow, semithick] (0,0) -- (0,5) -- (1,5) -- (1,4) -- (2,4) -- (2,3) -- (4,3) -- (4,2) -- (5,2) -- (5,1) -- (7,1) -- (7,0) -- (0,0);
        \axes{9}{7}
        \draw[green, thick] (0, 5 + 2/3) -- (8.5, 0);
        \draw[gray, pattern color = gray, pattern=crosshatch] (1,1) -- (1,5) -- (7,1) -- (1,1);
		\draw[green, thick] (0,6) -- (9,0);
        \filldraw[black] (1,5) circle (2.5pt);
        \filldraw[black] (7,1) circle (2.5pt);
        \filldraw[color=black, fill=white] (3,4) circle (2.5pt);
        \filldraw[black=black, fill=white] (6,2) circle (2.5pt);
	\end{tikzpicture}
\caption{In $\tau = 75421$, cells $(1,5)$, $(7,1)$ are removable, and cells $(3,4)$, $(6,2)$ are addable.}
\end{figure}
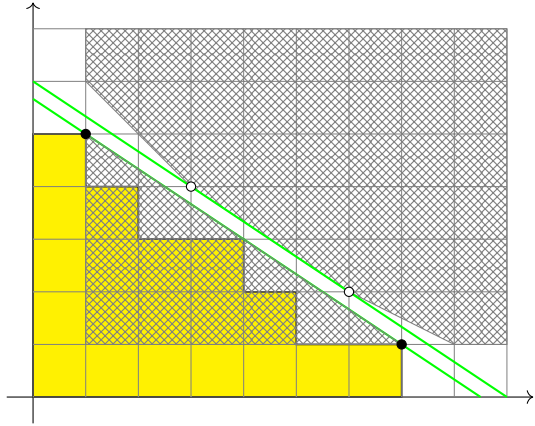

\begin{proof}
We will prove the statement for removable cells. The statement for addable cells can be proved analogously.
Let  $c_1 = (a_1,b_1)$ and $c_2 = (a_2,b_2)\in\tau$ be two different cells with $a_1\leq a_2$, and let $L$ be the line passing through them. 

To prove the forward direction, suppose that $c_1$ and $c_2$ are removable cells. Then, $a_1\neq a_2$, because otherwise the cell with lower $y$-coordinate would not be removable. By Lemma~\ref{lemma_vertices}, both must be vertices of $\Conv(\tau)$.
Suppose for contradiction that they are not consecutive vertices. Then there exists a vertex $c_3 = (a_3,b_3)$ of $\Conv(\tau)$ which lies above $L$, and such that $a_1<a_3<a_2$. 
If $a_2- a_3 \leq a_3 - a_1$, then any line that removes $c_2$ but not $c_3$ must lie above $(2a_3 - a_2, 2b_3 - b_2)$. But this cell is not in $\tau$, because $c_1$ lies strictly below and weakly to the left of it. This contradicts the fact that $c_2$ is removable. 
If, instead, $a_2 - a_3 > a_3 - a_1$, we can similarly reach a contradiction with the fact that $c_1$ is removable. It follows that $c_1$ and $c_2$ must be consecutive vertices of $\Conv(\tau)$.

Next we show that $L$ does not intersect $\Conv(\bbN^2\setminus\tau)$, by arguing that all the points in $\bbN^2\setminus\tau$ lie strictly above $L$.
Indeed, if there was a cell $c'\in\bbN^2\setminus\tau$ to the left of $c_1$ and lying weakly below $L$, then any line that removes $c_2$ but not $c_1$ would cut off a partition that contains $c'$. But that would contradict the fact that $c_2$ is removable. A similar argument shows that there cannot be a cell in $\bbN^2\setminus\tau$ to the right of $c_2$ and lying weakly below $L$. Finally, any point in $\bbN^2$ to the right of $c_1$, to the left of $c_2$ and weakly below $L$ must belong to $\tau$, since $c_1$ and $c_2$ must lie weakly below any cutting line.

To prove the backward direction, suppose that $c_1$ and $c_2$ are consecutive vertices of $\Conv(\tau)$, and that $L$ does not intersect $\Conv(\bbN^2\setminus\tau)$. In particular, $a_1 \neq a_2$, because otherwise $L$ would be vertical and it would intersect $\Conv(\bbN^2\setminus\tau)$.
We have that $\Conv(\tau)$ lies weakly below $L$, and there are no cells of $\tau$ on $L$ to the left of $c_1$ or to the right of $c_2$. Also, by hypothesis, $\Conv(\bbN^2\setminus\tau)$ lies strictly above $L$. Therefore, $L$ is a cutting line for $\tau$. 

Let us show that $c_1$ and $c_2$ are removable. Let $(t, 1 - t)$ be the slope vector of $L$. There exists $\delta > 0$ small enough so that the line passing through $c_1 = (a_1,b_1)$ with slope vector $(t - \delta, 1 - t + \delta)$ is also a cutting line for $\tau$ and does not touch any other point of $\bbN^2$, and there exists a $\varepsilon > 0$ small enough so that the line passing through $(a_1, b_1 - \varepsilon)$ with slope vector $(t - \delta, 1 - t + \delta)$ is a cutting line for $\tau\setminus\{c_1\}$. This proves that $c_1$ is removable, and a similar argument shows that so is $c_2$.
\end{proof}

An immediate consequence of Proposition~\ref{charact_trcp_tacp} is that a triangular partition can have no more than two removable cells and no more than two addable cells, agreeing with Lemma~\ref{lem:removable-addable}.
Note that the cell $(1,1)$ in a triangular partition $\tau$ is removable if and only if $|\tau| = 1$.

\begin{proposition}
\label{charact_orcp_oacp}
A cell $c = (a,b) \neq (1,1)$ in a triangular partition $\tau$ is its only removable cell if and only if it is a vertex of $\Conv(\tau)$ and both of the following conditions hold:
\begin{itemize}
    \item if $a > 1$, the line extending the edge of $\Conv(\tau)$ adjacent to $c$ from the left intersects $\Conv(\bbN^2\setminus\tau)$ to the right of $c$;
    \item if $b > 1$, the line extending the edge of $\Conv(\tau)$ adjacent to $c$ from below intersects $\Conv(\bbN^2\setminus\tau)$ above $c$.
\end{itemize}
The characterization for a cell in $\bbN^2\setminus\tau$ to be the only addable cell of $\tau$ is analogous.
\end{proposition}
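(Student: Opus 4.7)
The plan is to reduce both directions of the biconditional to Proposition~\ref{charact_trcp_tacp} together with the following auxiliary slope inequality, which is a consequence of the triangularity of $\tau$. For any $c = (a,b) \in \tau$, any $c^- = (x^-,y^-) \in \bbN^2\setminus\tau$ with $x^- < a$, and any $c^+ = (x^+,y^+) \in \bbN^2\setminus\tau$ with $x^+ > a$ and $y^+ < b$, one has
\[ \frac{y^- - b}{x^- - a} < \frac{y^+ - b}{x^+ - a}. \]
Note that $y^- > b$ is automatic, since otherwise $c^-$ would lie in $\tau$ by downward closure from $c$. The proof will be by contradiction: if the opposite inequality held, a direct calculation shows the segment from $c^-$ to $c^+$, which lies in $\Conv(\bbN^2\setminus\tau)$, crosses the horizontal line $y = b$ at some point with $x$-coordinate at most $a$; that point then lies in $\tau \subseteq \Conv(\tau)$ by downward closure from $c$, contradicting Proposition~\ref{charact_triang2D}.

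For the forward direction, assume $c = (a,b) \neq (1,1)$ is the only removable cell of $\tau$. Lemma~\ref{lemma_vertices} gives that $c$ is a vertex of $\Conv(\tau)$. Assuming $a > 1$, let $c_L$ be the vertex of $\Conv(\tau)$ adjacent to $c$ from the left and let $L$ be the line through them. Since $c_L$ is not removable by uniqueness of $c$, Proposition~\ref{charact_trcp_tacp} forces $L$ to meet $\Conv(\bbN^2\setminus\tau)$. The intersection is a convex segment of $L$ avoiding the edge $[c_L,c] \subseteq \Conv(\tau)$, so it lies entirely in one of the two open rays of $L$ exterior to that edge. Applying the auxiliary slope inequality to the pair of vertices of $\Conv(\bbN^2\setminus\tau)$ bounding any boundary edge that crosses $L$ shows the crossing point has $x$-coordinate strictly greater than $a$; hence both endpoints of the intersection segment lie at $x > a$, placing the whole intersection to the right of $c$. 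The case $b > 1$ is handled symmetrically.

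For the backward direction, assume $c \neq (1,1)$ is a vertex of $\Conv(\tau)$ satisfying the two line conditions. The condition for $a > 1$ translates, by the same convexity argument as above, into the statement that every complement cell to the left of $c$ is strictly above $L$; the condition for $b > 1$ gives the analogous statement for complement cells to the right of $c$ relative to the line $L'$ through $c$ and $c_B$. Combined with the auxiliary slope inequality, these constraints show that the set of slopes $m$ for which the line through $c$ of slope $m$ is a cutting line for $\tau$ meeting no cell of $\tau$ other than $c$ is a nonempty open interval, and any such slope witnesses that $c$ is removable. Uniqueness is then immediate: any second removable cell would, by Proposition~\ref{charact_trcp_tacp}, be a vertex of $\Conv(\tau)$ adjacent to $c$, that is, $c_L$ or $c_B$, making the line through it and $c$ disjoint from $\Conv(\bbN^2\setminus\tau)$ and directly contradicting the corresponding line condition. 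Lemma~\ref{lem:removable-addable} guarantees that at least one removable cell exists, pinning it down as $c$.

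The main obstacle will be the auxiliary slope inequality and its correct translation into the geometry of boundary-edge crossings of $L$; once that is in place, both directions reduce to careful bookkeeping with Proposition~\ref{charact_trcp_tacp}. Boundary cases $a = 1$ or $b = 1$ are handled by vacuously omitting the corresponding edge condition, and the analogous characterization of the only addable cell follows by the same argument after swapping the roles of $\Conv(\tau)$ and $\Conv(\bbN^2\setminus\tau)$.
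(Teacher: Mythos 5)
Your auxiliary slope inequality is correct (and its proof via the crossing with the horizontal line $y=b$ is fine), and your backward direction is essentially sound, though it takes a different route from the paper: you build a cutting line through $c$ directly, using that the two line conditions force every complement cell with $x\le a$ strictly above $L$ and every complement cell with $y\le b$ strictly above $L'$, and then the slope inequality separates the admissible slopes into a nonempty open interval; the paper instead never shows $c$ is removable directly, but shows no \emph{other} cell can be removable and invokes Lemma~\ref{lem:removable-addable}. Your route works but needs $c$ removable before Proposition~\ref{charact_trcp_tacp} can be applied to a hypothetical second removable cell, so the ``nonempty interval'' step is load-bearing and should be written out (in particular, that the relevant suprema and infima are attained by finitely many extremal complement cells).

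The forward direction, however, has a genuine gap at its key step. You claim the slope inequality forces every crossing of $L$ with the boundary of $\Conv(\bbN^2\setminus\tau)$ to occur at $x>a$. This does not follow: a boundary edge of $\Conv(\bbN^2\setminus\tau)$ meeting $L$ need not have one endpoint with $x<a$ and one with $x>a$, $y<b$ (both endpoints can lie to the left of $c$, in which case the inequality says nothing about that edge), and even when the inequality applies it locates the crossing of the segment with the horizontal line $y=b$, not with $L$. More fundamentally, the slope inequality is a consequence of the triangularity of $\tau$ alone, whereas the conclusion genuinely requires the removability of $c$. A concrete witness: for $\tau=431$, the cells $c=(4,1)$ and $c_L=(3,2)$ are consecutive vertices of $\Conv(\tau)$, and the line $L\colon x+y=5$ through them meets $\Conv(\bbN^2\setminus\tau)$ exactly in the segment from $(1,4)$ to $(2,3)$, entirely to the \emph{left} of $c_L$ --- while the slope inequality of course holds, since $\tau$ is triangular. (Here $(4,1)$ is not removable, so the proposition is not contradicted, but your chain of reasoning ``$L$ meets the complement hull, hence by the slope inequality it meets it to the right of $c$'' is.) The paper closes exactly this hole by exploiting the triangularity of $\tau\setminus\{c\}$: if a point $q$ of the intersection lay to the left of $c_L$, then $c_L$ would lie on the segment from $c$ to $q$, both endpoints of which belong to $\Conv\bigl(\bbN^2\setminus(\tau\setminus\{c\})\bigr)$, contradicting Proposition~\ref{charact_triang2D} applied to $\tau\setminus\{c\}$. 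You need an argument of this kind that actually uses the hypothesis that $c$ is removable.
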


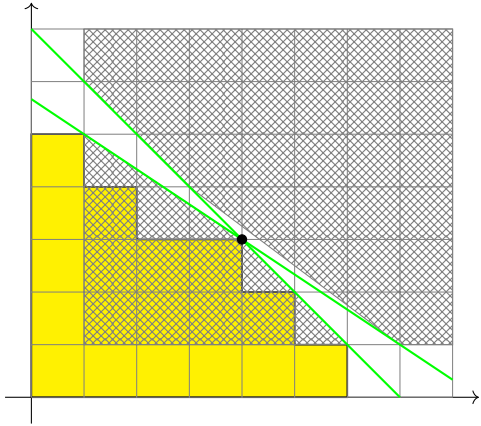
\begin{figure}[ht]
\centering
	\begin{tikzpicture}[scale=.7]
	    \draw[gray, pattern color = gray, pattern=crosshatch] (1,6) -- (3,4) -- (7,1) -- (8,1) -- (8,7) -- (1,7) -- (1,6);
		\filldraw[color=black, fill=yellow, semithick] (0,0) -- (0,5) -- (1,5) -- (1,4) -- (2,4) -- (2,3) -- (4,3) -- (4,2) -- (5,2) -- (5,1) -- (6,1) -- (6,0) -- (0,0);
  \draw[gray, pattern color = gray, pattern=crosshatch] (1,1) -- (1,5) -- (4,3) -- (6,1) -- (1,1);
        \axes{8}{7}
        \draw[green, thick] (0, 5 + 2/3) -- (8, 1/3);
		\draw[green, thick] (0,7) -- (7,0);
        \filldraw[black] (4,3) circle (2.5pt);
	\end{tikzpicture}
\caption{For partition $\tau = 65421$, the only removable cell is $(4,3)$.}
\end{figure}

\begin{proof}
We will prove the statement about removable cells. The statement about addable cells has a similar proof.

To prove the forward direction, suppose that $c$ is the only removable cell of $\tau$. By Lemma~\ref{lemma_vertices}, $c$ is a vertex of $\Conv(\tau)$. By symmetry, it suffices to prove the first condition, namely the one that assumes $a>1$. Let $c'$ be the vertex of $\Conv(\tau)$ adjacent to $c$ from the left. By Proposition \ref{charact_trcp_tacp}, the line through $c$ and $c'$ must intersect $\Conv(\bbN^2\setminus\tau)$, since $c$ is the only removable cell. Let $q$ be a point in this intersection. If $q$ lies in the segment between $c$ and $c'$, then $q\in\Conv(\tau)\cap\Conv(\bbN^2\setminus\tau)$, contradicting Proposition~\ref{charact_triang2D}. If $q$ lies to the left of $c'$, then $c'\in\Conv(\bbN^2\setminus(\tau\setminus\{c\}))$, since $c'$ lies in the segment between $c$ and $q$, but $c'$ is also in the triangular partition $\tau\setminus\{c\}$, again contradicting Proposition~\ref{charact_triang2D}. We deduce that $q$ lies to the right of $c$.

To prove the backward direction, it suffices to show that no cell other than $c$ is removable, since we know by Lemma~\ref{lem:removable-addable} that triangular partitions have at least one removable cell.

Suppose that there is a removable cell $c'$ to the left of $c$. Then $c'$ must lie weakly below the line extending the side of $\Conv(\tau)$ adjacent to $c$ from the left. By hypothesis, there is a point $q\in\Conv(\bbN^2\setminus\tau)$ to the right of $c$ on this line. But then, any cutting line $L$ for $\tau\setminus\{c'\}$ would have to pass below $c'$ and weakly above $c$, and thus also weakly above $q$. Since $q\in\Conv(\bbN^2\setminus\tau)$, there would would be a point in $\bbN^2\setminus\tau$ lying weakly below $L$, which is a contradiction.

Similarly, there cannot be a removable cell to the right of $c$. Finally, any cell of $\tau$ with the same $x$-coordinate as $c$ must have a lower $y$-coordinate because $c$ is a vertex of $\Conv(\tau)$, and thus it cannot be removable.
\end{proof}

For a triangular partition $\tau$, let $t^-\coloneqq t^-_\tau$ and $t^+\coloneqq t^+_\tau$ as defined in Lemma~\ref{charact_bergeron}.
Next we show how to use these values to find the removable cell(s) of $\tau$. Let $m^-=\max_{(a,b)\in\tau} t^-a + (1 - t^-)b$, let $C^-$ be the set of cells in $\tau$ that attain this maximum, and let $L^-$ be the line $t^-x + (1 - t^-)y=m^-$. Define $m^+$, $C^+$ and $L^+$ analogously. Let $c^-$ be the rightmost cell in $C^-$, and let $c^+$ be the uppermost cell in $C^+$. See Figure~\ref{fig:c+c-} for an example.

\begin{figure}[ht]
\centering
	\begin{tikzpicture}[scale=.8]
		\filldraw[color=black, fill=yellow, semithick] (0,0) -- (0,5) -- (1,5) -- (1,4) -- (2,4) -- (2,3) -- (3,3) -- (3,2) -- (5,2) -- (5,1) -- (6,1) -- (6,0) -- (0,0);
        \axes{8}{7}
        \draw[blue, thick] (0, 5 + 2/3) -- (8, 1/3);
		\draw[red, thick] (0,7) -- (7,0);
		\draw[blue, ->, thick] (4,3) -- (4.4, 3.6) node[above, scale=.8]{$(t^-, 1 - t^-)$};
		\draw[red, ->, thick] (4,3) -- (4.5, 3.5) node[right, scale=.8]{$(t^+, 1 - t^+)$};
        \filldraw[black] (1,5) circle (2.5pt) node[below left, outer sep=-1]{$c^-$};
        \filldraw[black] (5,2) circle (2.5pt) node[below left, outer sep=-1]{$c^+$};
        \filldraw[black] (6,1) circle (2.5pt);
        \draw[blue] (6.7,1.8) node {$L^-$};
        \draw[red] (1.9,6) node {$L^+$};
	\end{tikzpicture}
\caption{For $\tau = 65321$, we have $C^- = \{(1,5)\}$, $C^+ = \{(5,2),(6,1)\}$, and so $c^- = (1,5)$ and $c^+ = (5,2)$.}
\label{fig:c+c-}
\end{figure}
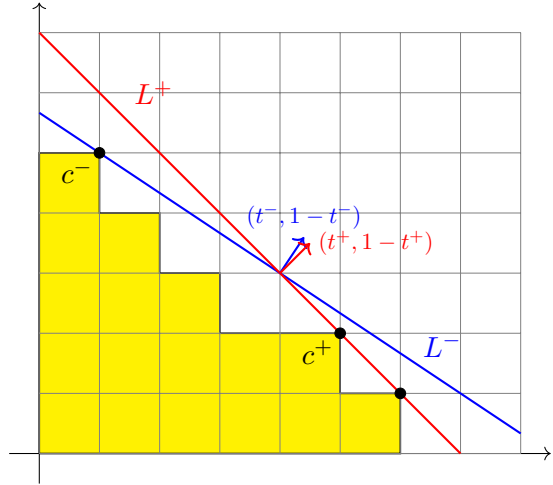

\begin{proposition}
\label{c+c-}
Let $\tau$ be a triangular partition and let $c^-$ and $c^+$ be as defined above. If $c^- = c^+$, then this is the only removable cell of~$\tau$. If $c^-\neq c^+$, then both of these cells are removable.
\end{proposition}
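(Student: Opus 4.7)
The plan is to convert removability into a single-parameter optimization problem: a cell $c\in\tau$ is removable if and only if there is some $t\in(t^-,t^+)$ for which $c$ is the \emph{unique} cell of $\tau$ maximizing the linear functional $f_t(a,b)=ta+(1-t)b$. The ``only if'' direction is immediate from Lemmas~\ref{line_touching_removable_cell} and~\ref{charact_bergeron}: a cutting line containing only $c$ has a slope vector $(t,1-t)$ with $t\in(t^-,t^+)$, and $c$ is the unique point of $\tau$ on this line. The ``if'' direction is the observation that if $c$ is the unique maximizer then the line $\{f_t=f_t(c)\}$ is a cutting line touching $\tau$ only at $c$, and shifting it slightly below $c$ yields a cutting line for $\tau\setminus\{c\}$.

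First I would verify that $c^-$ is removable by taking $t=t^-+\varepsilon$ with $\varepsilon>0$ small. On each cell $(a_i,b_i)\in C^-$ one has $f_t(a_i,b_i)=m^-+\varepsilon(a_i-b_i)$, which is maximized at the element of $C^-$ with the largest $a_i-b_i$. Because $L^-$ has non-positive slope, this is precisely the rightmost cell of $C^-$, namely $c^-$. For cells in $\tau\setminus C^-$, $f_{t^-}$ is strictly below $m^-$; since $\tau$ is finite this strict gap survives for $\varepsilon$ small enough, and choosing $\varepsilon<t^+-t^-$ also ensures that $(t,1-t)$ is a valid slope vector. Hence $c^-$ is the unique $f_t$-maximizer over $\tau$, so it is removable. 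The proof that $c^+$ is removable is symmetric, using $t=t^+-\varepsilon$ and the fact that the uppermost cell of $C^+$ is the one with smallest $a-b$ on $L^+$.

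To finish I would split into cases. If $c^-\neq c^+$, Lemma~\ref{lem:removable-addable} says $\tau$ has at most two removable cells, so $c^-$ and $c^+$ are exactly the removable cells. If $c^-=c^+$, I argue that no other cell is removable: suppose for contradiction that some $c'\neq c^-$ were removable, so there exists $t^\star\in(t^-,t^+)$ at which $c'$ is the unique $f_{t^\star}$-maximizer over $\tau$, giving $g(t^\star)>0$ where $g(t):=f_t(c')-f_t(c^-)$. By the preceding paragraph, however, $g(t^-+\varepsilon)<0$ and $g(t^+-\varepsilon)<0$ for every sufficiently small $\varepsilon>0$. Since $g$ is linear in $t$, it cannot be positive at an interior point of an interval on which it is negative near both endpoints, yielding the desired contradiction.

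The step that requires the most care is the ``if'' direction of the removability criterion, where one must check that a small perpendicular shift of the cutting line through $c$ continues to lie strictly below every cell of $\bbN^2\setminus\tau$; the rest of the argument is a straightforward continuity-and-linearity calculation in the single parameter $t$.
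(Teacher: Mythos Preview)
Your proof is correct, and the overall strategy---that removability of $c$ is equivalent to $c$ being the \emph{unique} maximizer of $f_t$ over $\tau$ for some $t\in(t^-,t^+)$---is essentially the same as the paper's. In particular, your perturbation $t=t^-+\varepsilon$ to isolate $c^-$ is exactly what the paper does when it produces a cutting line for $\tau\setminus\{c^-\}$ with slope vector $(t^-+\varepsilon,1-t^--\varepsilon)$.

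The one place where you genuinely diverge is the uniqueness argument when $c^-=c^+$. The paper invokes Proposition~\ref{charact_trcp_tacp}: a second removable cell $c$ would force the line through $c$ and $c^-$ to be a cutting line, and its slope vector would satisfy $t\le t^-$ (or $t\ge t^+$), contradicting Lemma~\ref{charact_bergeron}. Your alternative---that the affine function $g(t)=f_t(c')-f_t(c^-)$ is negative near both endpoints of $(t^-,t^+)$ and hence cannot be positive anywhere on the interval---is cleaner in that it avoids the convex-hull machinery of Section~\ref{sec:characterizations_triangular} and stays entirely within the one-parameter framework. The paper's route, on the other hand, ties the result more directly to the geometric picture of consecutive vertices of $\Conv(\tau)$.

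One small remark on your final caveat: the delicate step in the ``if'' direction is not the downward shift (cells of $\bbN^2\setminus\tau$ that were strictly above the line remain strictly above when you lower it), but rather the claim that $\{f_t=f_t(c)\}$ is a cutting line to begin with. This is where you use $t\in(t^-,t^+)$: by Lemma~\ref{charact_bergeron} there is \emph{some} cutting line with slope vector $(t,1-t)$, and since $c$ maximizes $f_t$ over $\tau$, that cutting line must be $\{f_t=f_t(c)\}$ itself.
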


\begin{proof}
We have $c^-\in L^-$ by construction. If there was a point in $\bbN^2\setminus\tau$ lying weakly below $L^-$ and to the left of $c^-$, then any cutting line for $\tau$ would have a slope vector $(t, 1 - t)$ with $t \leq t^-$, in contradiction with Lemma~\ref{charact_bergeron}. Moreover, no point in $\bbN^2\setminus\tau$ can lie strictly below $L^-$ and to the right of $c^-$, because by the same lemma, there is a cutting line with slope vector $(t, 1 - t)$ for any $t^-<t<t^+$.

As a consequence, there exist $\delta,\varepsilon > 0$ small enough so that the line passing through $c^- -(0,\delta)$ with slope vector $(t^- + \varepsilon, 1 - t^- - \varepsilon)$ is a cutting line for $\tau\setminus\{c^-\}$. Therefore, $c^-$ is removable.
An analogous argument shows that $c^+$ is removable as well.

Finally, let us show that if $c^- = c^+$, then there are no more removable cells. Suppose that there is another removable cell $c$ to the left of $c^-$, and let $L$ be the line through $c$ and $c^-$, which must be a cutting line by Proposition~\ref{charact_trcp_tacp}. Since $c$ lies weakly below $L^-$, the slope vector $(t, 1 - t)$ of $L$ satisfies $t\leq t^-$, contradicting Lemma~\ref{charact_bergeron}. A symmetric argument shows that there is no removable cell to the right of $c^+$ either.
\end{proof}

\subsection{An algorithm to determine triangularity}\label{sec:triangularity_algorithm}

Next we consider the problem of determining whether a given partition $\lambda$ is triangular.
A method for this was given by Bergeron and Mazin~\cite{Bergeron2023}, as described in Lemma~\ref{charact_bergeron}, but it requires computing two values $t_\lambda^-$ and $t_\lambda^+$ for each cell in $\lambda$. Next we present a more efficient method, which also yields the removable and addable cells when the partition is triangular. We start with some preliminary results.

Let $\lambda= \lambda_1\dots\lambda_k$ be a partition. A cell in $\lambda$ is called a \emph{corner cell} if removing it from $\lambda$ yields a partition, and a cell in $\bbN^2\setminus\lambda$ is called a \emph{complementary corner cell} if adding it to $\lambda$ yields a partition. Equivalently, a corner cell is of the form 
$(\lambda_i,i)$ with either $i = k$ or $\lambda_i > \lambda_{i + 1}$; and a complementary corner cell is of the form $(\lambda_1 + 1,1)$, $(1, k + 1)$, or $(\lambda_i + 1, i)$ for $2\leq i\leq k$ such that $\lambda_{i - 1}>\lambda_i$. For any $\lambda$, the number of complementary corner cells is one more than the number of corner cells, which in turn equals the number of distinct parts of $\lambda$. Let $\cC(\lambda)$ be set of corner cells of $\lambda$, together with the cells $(1,1),(\lambda_1,1),(1,k)$. Let $\cC'(\lambda)$ be set of complementary corner cells of $\lambda$, together with the cell $(\lambda_1 + 1, k + 1)$. 
The next lemma will help us find the vertices of $\Conv(\lambda)$ and $\Conv(\bbN^2\setminus\lambda)$ efficiently.

\begin{lemma} \label{lemma-corner-cells}
The following hold:
\begin{enumerate}[(a)]
    \item $\Conv(\lambda) = \Conv(\cC(\lambda))$, 
    \item the vertices of $\Conv(\bbN^2\setminus\lambda)$ are those of $\Conv(\cC'(\lambda))$ except for $(\lambda_1 + 1, k + 1)$.
    \end{enumerate}
\end{lemma}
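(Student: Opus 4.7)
For (a), I would reduce to showing that every vertex of $\Conv(\lambda)$ lies in $\cC(\lambda)$: once this is established, since $\cC(\lambda)\subseteq\lambda$, the chain $\Conv(\lambda)=\Conv(\text{vertices of }\Conv(\lambda))\subseteq\Conv(\cC(\lambda))\subseteq\Conv(\lambda)$ collapses to equality. I argue the contrapositive by a short midpoint argument: for any $(a,b)\in\lambda\setminus\cC(\lambda)$, I exhibit two distinct cells of $\lambda$ whose midpoint is $(a,b)$, showing $(a,b)$ is not extreme. Since $(a,b)$ is not a corner cell, either $(a+1,b)\in\lambda$ or $(a,b+1)\in\lambda$; combined with the facts that $(a-1,b)\in\lambda$ whenever $a\ge 2$ and $(a,b-1)\in\lambda$ whenever $b\ge 2$, the midpoint is immediate except when $a=1$ or $b=1$, and in those boundary cases the explicit exclusion of $(1,1)$, $(\lambda_1,1)$ and $(1,k)$ from $\lambda\setminus\cC(\lambda)$ guarantees room on both sides to form the midpoint along the relevant axis.

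For (b), the same midpoint argument, now exploiting that $\bbN^2\setminus\lambda$ is closed under adding $(1,0)$ and $(0,1)$, shows that every vertex of $\Conv(\bbN^2\setminus\lambda)$ must be a complementary corner cell, hence lies in $\cC'(\lambda)\setminus\{(\lambda_1+1,k+1)\}$. It remains to match vertices between the two convex hulls. For one direction, any linear functional $L(x,y)=ax+by$ achieving its minimum uniquely at a vertex $(c,d)$ of $\Conv(\bbN^2\setminus\lambda)$ must have $a,b\ge 0$ (otherwise $L$ is unbounded below on this set, which is unbounded in both positive coordinate directions), and restricting $L$ to the subset $\cC'(\lambda)\subseteq\bbN^2\setminus\lambda$ preserves the unique minimizer, so $(c,d)$ is also a vertex of $\Conv(\cC'(\lambda))$.

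For the reverse direction, I first observe that $(\lambda_1+1,k+1)$ is a vertex of $\Conv(\cC'(\lambda))$, since it uniquely maximizes $x+y$ on $\cC'(\lambda)$. For any other vertex $(c,d)$ of $\Conv(\cC'(\lambda))$, the cell $(\lambda_1+1,k+1)$ weakly dominates $(c,d)$ coordinate-wise with strict domination in at least one coordinate, which forces the inward normal cone at $(c,d)$ to contain a direction $(a,b)$ with $a,b>0$. For any such $(a,b)$, the functional $L=ax+by$ is uniquely minimized at $(c,d)$ on $\cC'(\lambda)$; and since $\bbN^2\setminus\lambda$ is upward-closed and $a,b>0$, the minimum of $L$ on $\bbN^2\setminus\lambda$ must be attained at a complementary corner cell, which lies in $\cC'(\lambda)$, so uniquely at $(c,d)$, making it a vertex of $\Conv(\bbN^2\setminus\lambda)$. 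The main subtlety of (b), compared with (a), is the unboundedness of $\bbN^2\setminus\lambda$, which precludes a direct convex-hull equality argument and forces the analysis through supporting functionals; the role of the auxiliary cell $(\lambda_1+1,k+1)$ is precisely to close off $\Conv(\cC'(\lambda))$ into a bounded polygon while leaving the lower-left staircase boundary (where positive-gradient supporting functionals exist) untouched.
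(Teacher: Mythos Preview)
Your argument for (a) is essentially the paper's: both show that any cell of $\lambda$ outside $\cC(\lambda)$ is a midpoint of two other cells of $\lambda$, hence not a vertex. Your treatment of the boundary cases is if anything more explicit than the paper's.

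For (b), the overall strategy---show the vertices of $\Conv(\bbN^2\setminus\lambda)$ are complementary corner cells via a midpoint argument, then match vertices of the two hulls via supporting functionals whose gradients lie in the positive quadrant---is sound and parallels the paper's use of supporting lines of negative slope. But there is a genuine gap in your reverse direction. You assert that because $(\lambda_1+1,k+1)$ weakly dominates every other vertex $(c,d)$ of $\Conv(\cC'(\lambda))$, the (min-)normal cone at $(c,d)$ must contain a direction with $a,b>0$. This implication is false in general: take the triangle with vertices $(0,0)$, $(2,0)$, $(2,2)$. The vertex $(2,2)$ dominates $(2,0)$, yet the open normal cone at $(2,0)$ is $\{a<0,\,b>0\}$, which misses the positive quadrant entirely. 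Domination by one vertex constrains only that vertex's normal cone, not the others'.

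The claim \emph{is} true in the specific situation of $\cC'(\lambda)$, but you need more of its structure to see it. One clean fix: observe that $(1,k+1)$ and $(\lambda_1+1,1)$ are the two vertices adjacent to $(\lambda_1+1,k+1)$ in $\Conv(\cC'(\lambda))$ (they are the unique minimizers of $x$ and of $y$, respectively), so the closed normal cone at $(\lambda_1+1,k+1)$ is exactly $\{a\le 0,\,b\le 0\}$. The remaining vertices lie on the lower-left boundary from $(1,k+1)$ to $(\lambda_1+1,1)$, along which consecutive vertices have strictly increasing $x$ and strictly decreasing $y$; hence every edge there has an inward normal in the open positive quadrant, and the normal cone at each such vertex meets $\{a>0,\,b>0\}$. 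For the two endpoint vertices $(1,k+1)$ and $(\lambda_1+1,1)$ one checks directly (e.g.\ $(1,\varepsilon)$ and $(\varepsilon,1)$ work, respectively). This is essentially how the paper proceeds: it handles $(1,k+1)$ and $(\lambda_1+1,1)$ as immediate cases, and for an interior vertex $c=(a,b)$ with $2\le a\le\lambda_1$ and $2\le b\le k$ it argues that any supporting line must have negative slope (because $(1,k+1)$, $(\lambda_1+1,1)$, $(\lambda_1+1,k+1)$ must all lie on the same side), which is exactly the geometric counterpart of ``gradient in the positive quadrant.''
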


\begin{proof}
    Clearly $\cC(\lambda)\subseteq\lambda$, so $\Conv(\cC(\lambda))\subseteq\Conv(\lambda)$. For the reverse inclusion, note that every $(a,b)\in\lambda$ that is not in $\cC(\lambda)$ can be expressed as a convex combination of other cells in $\lambda$, since either $(a-1,b),(a+1,b)\in\lambda$ or $(a,b-1),(a,b+1)\in\lambda$. Thus, the vertices of $\Conv(\lambda)$ must be in $\cC(\lambda)$, and so $\Conv(\lambda)\subseteq\Conv(\cC(\lambda))$, proving $(a)$.

    To prove $(b)$, let us first argue that every vertex of $\Conv(\bbN^2\setminus\lambda)$ must also be a vertex of $\Conv(\cC'(\lambda))$. Indeed, if $c$ is a vertex of $\Conv(\bbN^2\setminus\lambda)$, there is a line through $c$ that leaves the rest of $\bbN^2\setminus\lambda$ strictly on one side; in particular, it leaves the rest of $\cC'(\lambda)$ strictly on one side. Thus $c$ is a vertex of $\Conv(\cC'(\lambda))$. Also, it is clear that $c\neq (\lambda_1 + 1, k + 1)$, since this point is not a vertex of $\Conv(\bbN^2\setminus\lambda)$.

    Finally, let us show that every vertex $c$ of $\Conv(\cC'(\lambda))$, other than $(\lambda_1 + 1, k + 1)$, is also a vertex of $\Conv(\bbN^2\setminus\lambda)$. This is clearly true for the vertices $(1,k+1)$ and $(\lambda_1+1,1)$, so suppose that $c=(a,b)$ with $a \le \lambda_1$ and $b \le k$. Then there there exists a line $L$ through $c$ that leaves the rest of $\cC'(\lambda)$ strictly on one side; more specifically, this line  must have negative slope and leave the rest of $\cC'(\lambda)$ strictly above it. To conclude that $c$ is a vertex of $\Conv(\bbN^2\setminus\lambda)$, it suffices to show that the rest of $\bbN^2\setminus\lambda$ also lies strictly above $L$. Indeed, if this was not the case, we could find some $c'\in\bbN^2\setminus\lambda$ with $c'\neq c$, lying weakly below $L$, and such that its sum of coordinates is smallest among all the cells with this property. Then $c'$ would be a complementary corner cell, in contradiction with the fact that all cells of $\cC'(\lambda)$ other than $c$ lie strictly above $L$.
\end{proof}

The following lemma will help us implement a binary search for removable cells.

\begin{lemma}
    \label{lemma-binary-search}
    Let $\tau$ be a triangular partition. Let $c_1,c_2\neq(1,1)$ be consecutive vertices of $\Conv(\tau)$, with $c_1$ to the left of $c_2$, and let $L$ be the line through $c_1$ and $c_2$. If there is some $c_3\in\bbN^2\setminus\tau$  lying weakly below $L$ and to the left of $c_1$ (resp.\ right of $c_2$), then any removable cells of $\tau$ 
    lie weakly to the left of $c_1$ (resp.\ right of $c_2$).

    Similarly, let $c'_1,c'_2$ be consecutive vertices of $\Conv(\bbN^2\setminus\tau)$, with $c'_1$ to the left of $c'_2$, and let $L'$ be the line through $c'_1$ and $c'_2$. If there is some $c'_3\in\tau$ lying weakly above $L'$ and to the left of $c'_1$ (resp.\ right of $c'_2$), then any addable cells 
    lie weakly to the left of $c'_1$ (resp.\ right of $c'_2$). 
\end{lemma}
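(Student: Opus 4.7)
The plan is to argue by contradiction, exploiting the cutting line produced by Lemma~\ref{line_touching_removable_cell} and comparing it with $L$ as affine functions of the horizontal coordinate. Suppose $\tau$ has a removable cell $c=(a_c,b_c)$ with $a_c>a_1$. Lemma~\ref{line_touching_removable_cell} yields a cutting line $\LL$ of $\tau$ such that $c$ is the only cell of $\tau$ on $\LL$. Writing $\LL(x)$ and $L(x)$ for the corresponding affine functions of $x$, two bounds fall out of the hypotheses: since $c_1\in\tau$ and $c_1\neq c$, it lies strictly below $\LL$, giving $\LL(a_1)>b_1=L(a_1)$; and since $c_3\in\bbN^2\setminus\tau$ lies strictly above $\LL$ but weakly below $L$ by assumption, one has $\LL(a_3)<b_3\le L(a_3)$.

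Consequently the linear function $\LL-L$ is negative at $a_3$ and positive at $a_1>a_3$, so it has positive slope; hence $(\LL-L)(a_c)>0$ because $a_c>a_1$. On the other hand, $\LL(a_c)=b_c$ since $c$ lies on $\LL$, while the line $L$ contains the edge $c_1c_2$ of $\Conv(\tau)$ and is therefore a supporting line of $\Conv(\tau)$; this forces $L(a_c)\ge b_c$ for the lattice point $c\in\tau$, yielding $(\LL-L)(a_c)\le 0$, a contradiction. This handles the clause in which $c_3$ lies to the left of $c_1$; the mirror statement with $c_3$ to the right of $c_2$ follows by the same argument after swapping the roles of $c_1$ and $c_2$.

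For the addable cells statement, the same strategy applies with one translation: if $c'$ is a putative addable cell of $\tau$ violating the conclusion, then $c'$ is a removable cell of the triangular partition $\tau\cup\{c'\}$, so Lemma~\ref{line_touching_removable_cell} applied to $\tau\cup\{c'\}$ furnishes a cutting line $\LL'$ touching this partition only at $c'$. Repeating the affine comparison with the roles of $\tau$ and $\bbN^2\setminus\tau$ (and of ``above'' and ``below'' a cutting line) interchanged, and with $L'$ now supporting $\Conv(\bbN^2\setminus\tau)$ from below, produces the analogous contradiction. The main place where care is needed is bookkeeping: one must track that cells of a partition lie weakly below its cutting lines while complement cells lie strictly above, and that these roles are exchanged when passing between the removable and addable settings.
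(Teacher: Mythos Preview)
Your argument is correct and follows essentially the same geometric idea as the paper's proof: compare a cutting line associated to the putative removable/addable cell with the supporting line $L$ (resp.\ $L'$) as affine functions of $x$, and derive a sign contradiction at the offending coordinate. The only cosmetic difference is that the paper takes a cutting line for $\tau\setminus\{c_4\}$ directly, whereas you invoke Lemma~\ref{line_touching_removable_cell} to get a cutting line for $\tau$ touching only at $c$; both choices yield the needed strict inequality at $a_1$ and the argument is otherwise identical.
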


\begin{proof}    
    Suppose that $c_4\in\tau$ is a removable cell other than $c_1$. Any cutting line for $\tau\setminus\{c_4\}$ must pass strictly below $c_3$ and $c_4$ and weakly above $c_1$. Since both $c_3$ and $c_4$ lie weakly below $L$, this implies that $c_4$ lies to the same side of $c_1$ as $c_3$. A similar argument proves the statement for $c_3$ to the right of $c_2$, as well as for addable cells.
\end{proof}

Our algorithm to determine if a partition $\lambda$ is triangular starts by finding its corner cells. Then, it computes $\Conv(\lambda)$ and $\Conv(\bbN^2\setminus\lambda)$, and it performs a binary search on the edges of the boundary of $\Conv(\lambda)$, using Proposition~\ref{charact_trcp_tacp} to look for a pair of removable cells. For each edge, it tries to find a point in $\bbN^2\setminus\lambda$ that lies below the line extending the edge, in order to apply Lemma~\ref{lemma-binary-search} and keep searching in the correct direction. If no pair of removable cells is found, the same procedure is applied to addable cells. Below is a more detailed description.

\begin{algorithm}[to determine whether a partition $\lambda = \lambda_1\dots\lambda_k$ is triangular]
\label{alg:triangular}
\begin{enumerate}
    \item Read the input $\lambda$ and record the cells in $\cC(\lambda)$ in counter-clockwise order starting at $(1,1)$, as well as the cells in $\cC'(\lambda)$ in clockwise order starting at $(\lambda_1 + 1, k + 1)$. Use Graham's scan~\cite{Graham1972} to find the convex hulls of $\cC(\lambda)$ and $\cC'(\lambda)$, which, by Lemma~\ref{lemma-corner-cells}, yield the vertices of $\Conv(\lambda)$ and $\Conv(\bbN^2\setminus\lambda)$. Let $w = (w_1,\dots,w_i)$ and $w' = (w'_1,\dots,w'_j)$ be the resulting lists of vertices, after deleting vertex $(1,1)$ and vertex $(\lambda_1 + 1, k + 1)$, respectively.
    \item If $w$ has at least two elements, let $c_1 = w_{\lfloor i/2 \rfloor + 1}$, $c_2 = w_{\lfloor i/2 \rfloor}$, and let $L$ be the line through $c_1$ and $c_2$. The signed distances of the vertices in $w'$ to $L$ (considering distances to be negative for vertices below $L$) form an upside-down unimodal sequence. Perform a ternary search to find the vertex $c'$ with the minimum signed distance.
    \begin{enumerate}
        \item If $c'$ lies weakly below $L$ and  to the left of $c_1$, then set $w=(w_{\lfloor i/2 \rfloor + 1},\dots,w_i)$ and repeat step~2.
        \item If $c'$ lies weakly below $L$ and to the right of $c_2$, then set $w=(w_1,\dots,w_{\lfloor i/2 \rfloor})$ and repeat step~2.
        \item If $c'$ lies weakly below $L$, to the left of $c_1$ and to the right of $c_2$, then $\lambda$ is not triangular because $c'\in\Conv(\lambda)$ (using Lemma~\ref{lemma_triangular_iff_vertex_on_cv}), and the algorithm ends.
        \item If $c'$ lies strictly above $L$, then $L$ is a cutting line for $\lambda$. In this case, $\lambda$ is triangular, and by Proposition~\ref{charact_trcp_tacp}, $c_1$ and $c_2$ are its removable cells. Check if the edge between $c'$ and the previous vertex in $w'$ is parallel to $L$, and similarly for the edge between $c'$ and the next vertex in $w'$. If so, then the two endpoints of the edge are the addable cells,
        by Proposition~\ref{charact_trcp_tacp}. If neither of these segments is parallel to $L$, then there is a line parallel to $L$ that cuts off $\lambda\cup\{c'\}$. Thus, $c'$ is an addable cell and, by Lemma~\ref{parallel_lines_removable_addable_cells}, it is the only one. We have found the addable and removable cells, and the algorithm ends.
    \end{enumerate}
    \item This step is reached when $\lambda$ has no pair of removable cells. In this case, we proceed analogously to step 2 but with the list $w'$ instead of $w$, in order to find a pair of addable cells and then a removable cell, as in step 2(d). If this step is not reached, then $\lambda$ is not triangular, by Lemma~\ref{lem:removable-addable}.
\end{enumerate}
\end{algorithm}

Let $\lambda$ be a partition of $n$ into $k$ parts, and let $m$ its number of distinct parts, which is $\bigO(\min\{k,\sqrt{n}\})$.
The complexity of finding the corner cells in step~1 is $\bigO(k)$.
Graham's scan runs in linear time in $m$, because corner cells (resp.\ complementary corner cells) are found in counter-clockwise (resp.\ clockwise) order with respect to $(1,1)$ (resp.\ $(\lambda_1 + 1, k + 1)$). Finally, steps~2 and~3 perform a binary search where each step runs a ternary search, and so they run in time $\bigO((\log m)^2)$. 

In summary, our algorithm takes time $\bigO(k)$ to read the input, and time $\bigO(m)$ to run the rest of the steps, while the space used is $\bigO(m)$.
For comparison, an algorithm based on Lemma~\ref{charact_bergeron} would run in time $\bigO(n)$ while occupying $\bigO(k)$ space.

\section{The triangular Young poset} \label{sec:triangular_young_poset} 

Bergeron and Mazin \cite{Bergeron2023} introduced the poset $\TYP$ of triangular partitions ordered by containment of their Young diagrams. An interesting property of this poset, as explained in Section~\ref{subsection:bergeron}, is that it has a planar Hasse diagram. This property is used in \cite{Bergeron2023} to deduce that $\TYP$ is a lattice, and it is ranked by the size of each partition. In this section we describe the M\"obius function of $\TYP$, and we give explicit constructions for the meet and the join of any two elements.

Our first result confirms Bergeron's conjecture (personal communication, 2022) that the M\"obius function, which we denote by $\mu$, only takes values in $\{-1,0,1\}$.

\begin{theorem}
\label{Mobius} 
Let $\tau,\nu\in\TYP$ such that $\tau\leq\nu$. Then
$$
\mu(\tau, \nu) = \begin{cases}
1 & \text{if either $\tau = \nu$ or there exist $\zeta^1\neq\zeta^2$ such that $\nu = \zeta^1\lor\zeta^2$ and $\tau\lessdot\zeta^1,\zeta^2$},\\
-1 &\text{if $\tau\lessdot\nu$},\\
0 &\text{otherwise.}
\end{cases}
$$
\end{theorem}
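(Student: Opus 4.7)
The plan is to deduce the theorem directly from Lemma~\ref{lem:removable-addable} via the atom form of the crosscut theorem (a consequence of Philip Hall's theorem). Recall that for any finite lattice $L$ with $\hat 0 \neq \hat 1$ and set of atoms $X$,
$$
\mu_L(\hat 0, \hat 1) \;=\; \sum_{\substack{S \subseteq X \\ \bigvee S = \hat 1}} (-1)^{|S|}.
$$
I would apply this formula to the interval $[\tau, \nu]$ inside $\TYP$.

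First I would identify the atoms. Since $\TYP$ is ranked by size and its covering relations add a single cell, the atoms of $[\tau, \nu]$ are the partitions of the form $\zeta = \tau \cup \{c\}$ where $c$ is an addable cell of $\tau$ with $\zeta \subseteq \nu$. Lemma~\ref{lem:removable-addable} guarantees that every nonempty triangular partition has one or two addable cells (and the empty partition has exactly one), so the interval $[\tau, \nu]$ has at most two atoms.

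I would then run a case analysis on the number $a$ of atoms. If $a = 0$, no partition of $[\tau,\nu]$ covers $\tau$, which forces $\tau = \nu$ and yields $\mu(\tau, \nu) = 1$. If $a = 1$ with unique atom $\zeta^1$, the crosscut sum collapses to $-[\zeta^1 = \nu]$, giving $-1$ when $\tau \lessdot \nu$ and $0$ otherwise. If $a = 2$ with atoms $\zeta^1 \neq \zeta^2$, both have rank $|\tau| + 1$ and are incomparable (since $\TYP$ is ranked), so neither can equal $\nu$: otherwise the other atom would lie strictly below $\nu$ at the same rank. The sum then reduces to $[\zeta^1 \vee \zeta^2 = \nu]$, giving $1$ when $\nu = \zeta^1 \vee \zeta^2$ and $0$ otherwise. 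These three behaviours match the three cases of the theorem exactly.

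I foresee no serious obstacle: the argument is essentially a one-line consequence of Lemma~\ref{lem:removable-addable}. The nontrivial content of the theorem has already been absorbed into that lemma, whose upper bound of two on addable cells forces the crosscut sum to carry at most four terms, making its value depend only on whether $\nu$ equals $\tau$, one of the atoms, or their join.
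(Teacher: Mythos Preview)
Your argument is correct and genuinely different from the paper's. The paper computes $\mu(\tau,\nu)$ directly from the recursive definition $\mu(\tau,\nu)=-\sum_{\tau\le\theta<\nu}\mu(\tau,\theta)$, splitting into cases according to whether one or two elements of $[\tau,\nu]$ cover $\tau$, and in each case inducting on $|\nu|-|\zeta|$ (where $\zeta$ is the unique atom or the join of the two atoms) to show that all later terms in the sum vanish. Your route via the crosscut theorem for atoms is shorter: since intervals of the lattice $\TYP$ are finite lattices, and Lemma~\ref{lem:removable-addable} bounds the number of atoms by two, the crosscut sum has at most three nonempty subsets to consider, and your case analysis on $a\in\{0,1,2\}$ is complete. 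The paper's approach is more elementary in that it invokes nothing beyond the definition of $\mu$, while yours is cleaner but assumes the reader knows (or is willing to cite) the crosscut theorem; both ultimately rest on the same structural input, the two-atom bound.
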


\begin{proof}
Trivially, if $\tau = \nu$, then $\mu(\tau,\nu) = 1$, and if $\tau\lessdot\nu$, then $\mu(\tau,\nu) = -1$. 
Otherwise, consider two possibilities depending on how many elements in the interval $[\tau,\nu]$ cover $\tau$. This number has to be one or two, since $\tau$ has at most two addable cells by Lemma~\ref{lem:removable-addable}.

If only one element $\zeta\in[\tau,\nu]$ covers $\tau$, we will show that $\mu(\tau,\nu) = 0$ by induction on $m=|\nu|-|\zeta|$. If $m=1$, then $\zeta\lessdot\nu$, and $\mu(\tau,\nu) = -\mu(\tau,\tau) - \mu(\tau,\zeta) = -1 + 1 = 0$. 
If $m>1$, then
$$\mu(\tau,\nu) = -\sum_{\tau\leq\theta<\nu}\mu(\tau,\theta) = -(1 - 1 + 0 + \dots  + 0) = 0,
$$
using the induction hypothesis.

If there are two elements $\zeta^1,\zeta^2\in[\tau, \nu]$ that cover $\tau$, let $\zeta=\zeta^1\lor\zeta^2$. If $\zeta=\nu$, then any $\theta\in[\tau,\nu]$ with $\theta<\nu$ falls in one of the above cases, and so
$$
\mu(\tau,\nu) = -\sum_{\tau\leq\theta<\nu}\mu(\tau,\theta) = -(1 - 1 - 1 + 0 + \dots  + 0) = 1.
$$
If $\zeta<\nu$, we will show that $\mu(\tau,\nu) = 0$ by induction on $m=|\nu|-|\zeta|$. If $m=1$, then $\zeta\lessdot\nu$, and
$$
\mu(\tau, \nu) = -\sum_{\tau\leq\theta<\nu}\mu(\tau,\theta) = -(1 - 1 - 1 + 0 + \dots  + 0 + 1) = 0.
$$
If $m>1$, then
$$\mu(\tau,\nu) = -\sum_{\tau\leq\theta<\nu}\mu(\tau,\theta) = -(1 - 1 - 1 + 0 + \dots  + 0 + 1 + 0 + \dots  + 0) = 0,
$$
using the induction hypothesis.
\end{proof}

As mentioned below Definition~\ref{def:diagonal}, the faces of the Hasse diagram of $\TYP$ are polygons with an even number of sides. We can interpret Theorem~\ref{Mobius} as stating that, if $\tau < \nu$ and $\nu$ does not cover $\tau$, then
$\mu(\tau, \nu)$ equals $1$ if $[\tau,\nu]$ is one of the polygonal faces (equivalently, if $\tau = \nu^\circ$), and $0$ otherwise.

Our next result explicitly characterizes the join and meet of elements of $\TYP$. See Figure~\ref{fig:join_meet} for an example.

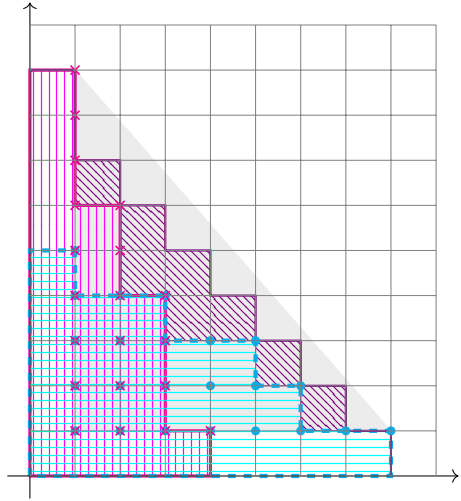
\begin{figure}[ht]
\centering
    \usetikzlibrary {patterns,patterns.meta}
	\begin{tikzpicture}[scale=.6]
 \filldraw[gray!15, very thin] (1,1) -- (1,9) -- (8,1) --  (1,1);
	    \draw[violet, pattern={north west lines},pattern color=violet] (1,7) -- (2,7) -- (2,6) -- (3,6) -- (3,5) -- (4,5) -- (4,4) -- (5,4) -- (5,3) -- (6,3) -- (6,2) -- (7,2) -- (7,1) -- (6,1) -- (6,2) -- (5,2) -- (5,3) -- (3,3) -- (3,4) -- (2,4) -- (2,6) -- (1,6) -- (1,7);
		\draw[magenta, very thick, pattern={vertical lines},pattern color=magenta]
        (0,0) -- (0,9) -- (1,9) -- (1,6) -- (2,6) -- (2,4) -- (3,4) -- (3,1) -- (4,1) -- (4,0) -- (0,0);
        \draw[cyan, dashed, ultra thick, pattern={horizontal lines},pattern color=cyan] (0,0) -- (0,5) -- (1,5) -- (1,4) -- (3,4) -- (3,3) -- (5,3) -- (5,2) -- (6,2) -- (6,1) -- (8,1) -- (8,0) -- (0,0);
        \draw[violet, thick] (0,9) -- (1,9) -- (1,7) -- (2,7) -- (2,6) -- (3,6) -- (3,5) -- (4,5) -- (4,4) -- (5,4) -- (5,3) -- (6,3) -- (6,2) -- (7,2) -- (7,1) -- (8,1) -- (8,0);
        \foreach \c in {(1,1),(2,1),(3,1),(4,1),(5,1),(6,1),(7,1),(8,1),(1,2),(2,2),(3,2),(4,2),(5,2),(6,2),(1,3),(2,3),(3,3),(4,3),(5,3),(1,4),(2,4),(3,4),(1,5)} 
            {\filldraw[cyan] \c circle (2.5pt); }
        \foreach \c in {(1,1),(2,1),(3,1),(4,1),(1,2),(2,2),(3,2),(1,3),(2,3),(3,3),(1,4),(2,4),(3,4),(1,5),(2,5),(1,6),(2,6),(1,7),(1,8),(1,9)} 
            {\draw[semithick, magenta] \c+(-.1,-.1) -- ++(.1,.1);
            \draw[semithick, magenta] \c+(-.1,.1) -- ++(.1,-.1);}
        \axes{9}{10}
	\end{tikzpicture}
\caption{The join of two partitions: $86531\lor433322111 = 876543211$.}
\label{fig:join_meet}
\end{figure}

\begin{proposition}
\label{triang_join_meet_construction}
For any $\tau,\nu\in\TYP$, we have
$$
    \tau\lor\nu = \bbN^2\cap\Conv(\tau\cup\nu) \quad\text{and}\quad
    \tau\land\nu = \bbN^2\setminus\Big(\bbN^2\cap\Conv\big(\bbN^2\setminus(\tau\cap\nu)\big)\Big).
$$
\end{proposition}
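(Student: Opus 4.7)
The plan is to prove the join formula in detail and then indicate how the meet formula follows by a dual argument. Set $\alpha = \bbN^2 \cap \Conv(\tau \cup \nu)$. I would verify in order that (i) $\alpha$ is a partition, (ii) $\alpha$ contains both $\tau$ and $\nu$, (iii) $\alpha$ is triangular, and (iv) any triangular partition containing $\tau\cup\nu$ also contains $\alpha$.

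For (i), what needs checking is that $\alpha$ is downward-closed in $\bbN^2$. Since $\tau\cup\nu$ is a Young diagram containing the entire leftmost column up to its height and the entire bottom row up to its width, for any $(a,b)\in\Conv(\tau\cup\nu)$ with $a\ge 2$ the point $(1,b)$ lies in $\tau\cup\nu$ (as $b$ is at most the height), so $(a-1,b)$ lies on the segment between $(1,b)$ and $(a,b)$ and hence belongs to $\Conv(\tau\cup\nu)$; a symmetric argument handles the other coordinate. Part (ii) is immediate from $\tau\cup\nu\subseteq\alpha$.

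For (iii), the key observation is that $\Conv(\alpha)=\Conv(\tau\cup\nu)$, since $\tau\cup\nu\subseteq\alpha\subseteq\Conv(\tau\cup\nu)$. I would then apply Lemma~\ref{lemma_triangular_iff_vertex_on_cv}. Any vertex $v$ of $\Conv(\alpha)$ is a vertex of $\Conv(\tau\cup\nu)$, hence lies in $\tau$ or $\nu$; since $\alpha\supseteq\tau,\nu$ forces $\bbN^2\setminus\alpha\subseteq(\bbN^2\setminus\tau)\cap(\bbN^2\setminus\nu)$, Proposition~\ref{charact_triang2D} applied to whichever of $\tau,\nu$ contains $v$ shows $v\notin\Conv(\bbN^2\setminus\alpha)$. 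Conversely, any vertex of $\Conv(\bbN^2\setminus\alpha)$ is a lattice point outside $\alpha$, so by the definition of $\alpha$ it lies outside $\Conv(\tau\cup\nu)=\Conv(\alpha)$. For (iv), the crucial identity is that $\bbN^2\cap\Conv(\beta)=\beta$ for every triangular $\beta$, because any lattice point in $\Conv(\beta)\setminus\beta$ would witness a nonempty intersection $\Conv(\beta)\cap\Conv(\bbN^2\setminus\beta)$, contradicting Proposition~\ref{charact_triang2D}. Thus if $\beta\supseteq\tau\cup\nu$ is triangular, $\alpha=\bbN^2\cap\Conv(\tau\cup\nu)\subseteq\bbN^2\cap\Conv(\beta)=\beta$.

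The meet formula follows by an entirely dual argument that swaps the roles of a partition and its complement. One verifies that $\bbN^2\cap\Conv(\bbN^2\setminus(\tau\cap\nu))$ is upward-closed (exploiting that the complement contains cofinite horizontal and vertical rays, so lattice points can be ``pushed'' northeast along segments and remain in the convex hull), uses the analogous identity $\bbN^2\cap\Conv(\bbN^2\setminus\beta)=\bbN^2\setminus\beta$ for triangular $\beta$, and applies Lemma~\ref{lemma_triangular_iff_vertex_on_cv} to the pair formed by $\tau\cap\nu$ and its complement. The step I expect to be the main obstacle is the monotonicity claim in (i) (and its dual), because convex hulls do not in general preserve the downward-closed property of lattice subsets; the resolution is to exploit the presence of the bottom row and left column of the underlying Young diagram, which forces the convex hull to retract along coordinate directions.
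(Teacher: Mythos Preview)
Your proof is correct and follows essentially the same route as the paper's: both use the identity $\Conv(\alpha)=\Conv(\tau\cup\nu)$ together with Lemma~\ref{lemma_triangular_iff_vertex_on_cv} and Proposition~\ref{charact_triang2D} to show $\alpha$ is triangular, and both derive minimality from the fact that a triangular partition contains every lattice point in its own convex hull. The only real difference is that you explicitly verify that $\alpha$ is downward-closed (your step~(i)), whereas the paper applies Lemma~\ref{lemma_triangular_iff_vertex_on_cv} without first checking this; your extra care is warranted since that lemma is stated for partitions, though in fact the separating-hyperplane argument behind it would force the downward-closed property anyway.
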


\begin{proof}
We will prove the statement for $\tau\lor\nu$. The statement for $\tau\land\nu$ has a similar proof.

Let $\zeta =\bbN^2\cap\Conv(\tau\cup\nu)$. 
The partition $\tau\lor\nu$ is triangular, so it consists of the points in $\bbN^2$ weakly below some cutting line. Therefore, since $\tau\lor\nu$ contains $\tau$ and $\nu$, it must also contain every lattice point that is a convex combination of points in $\tau$ and $\nu$. It follows that $\zeta \subseteq \tau\lor\nu$. 

On the other hand, it is clear that $\tau,\nu\subseteq\zeta$. To prove that $\tau\land\nu\subseteq\zeta$, it suffices to show that $\zeta$ is triangular. By Lemma~\ref{lemma_triangular_iff_vertex_on_cv}, this will follow if we show that no vertex of $\Conv(\zeta)$ is in $\Conv(\bbN^2\setminus\zeta)$ and vice versa.

Suppose that there is a vertex $c$ of $\Conv(\bbN^2\setminus\zeta)$ (which must be a point in $\bbN^2\setminus\zeta$) such that $c\in\Conv(\zeta)$. Note that $\Conv(\zeta) = \Conv(\tau\cup\nu)$, so $c$ is a lattice point in $\Conv(\tau\cup\nu)$, implying that $c\in\zeta$, which is a contradiction.

Suppose now that there is a vertex $c$ of $\Conv(\zeta)$ such that $c\in\Conv(\bbN^2\setminus\zeta)$. Since $\Conv(\zeta) = \Conv(\tau\cup\nu)$, every vertex must be a point in $\tau\cup\nu$, therefore either $c\in\tau$ or $c\in\nu$. Let us assume that $c\in\tau$ without loss of generality. Since $\tau\subseteq\zeta$, we have $\Conv(\bbN^2\setminus\tau)\supseteq\Conv(\bbN^2\setminus\zeta)$, and so $c\in\Conv(\bbN^2\setminus\tau)$. But this contradicts that $\tau$ is triangular.
\end{proof}

By repeatedly applying Proposition~\ref{triang_join_meet_construction}, one can show that, for any set of triangular partitions $\tau^1,\dots,\tau^k$, we have 
$$\tau^1\lor\dots \lor\tau^k = \bbN^2\cap\Conv(\tau^1\cup\dots \cup\tau^k) \quad\text{and}\quad \tau^1\land\dots \land\tau^k = \bbN^2\setminus\Big(\bbN^2\cap\Conv\big(\bbN^2\setminus(\tau^1\cap\dots \cap\tau^k)\big)\Big).$$

Recall that a non-minimum element in a lattice is called {\em join-irreducible} if it cannot be expressed as the join of other elements.

\begin{corollary}
The join-irreducible elements of $\TYP$ are the triangular partitions with one removable cell.
\end{corollary}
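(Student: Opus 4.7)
The plan is to invoke the standard lattice-theoretic fact that, in any finite lattice, an element is join-irreducible if and only if it is not the minimum and covers exactly one element. Combining Lemma~4.2 (which says $\zeta\lessdot\tau$ in $\TYP$ iff $\zeta=\tau\setminus\{c\}$ for some removable cell $c$ of $\tau$) with Lemma~\ref{lem:removable-addable}, the number of elements of $\TYP$ covered by a nonempty $\tau$ equals its number of removable cells, which is $1$ or $2$. So the corollary amounts to matching these two cases with join-irreducibility.

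For the forward direction, suppose $\tau$ has a unique removable cell $c$, so $\nu\coloneqq\tau\setminus\{c\}$ is the only element covered by $\tau$ in $\TYP$. If $\tau=\zeta^1\lor\zeta^2$ with $\zeta^1,\zeta^2<\tau$, then since $\TYP$ is finite, each $\zeta^i$ lies below some cover of $\tau$, and the only option is $\zeta^i\le\nu$. This gives $\tau=\zeta^1\lor\zeta^2\le\nu<\tau$, a contradiction.

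For the converse, suppose $\tau$ has two removable cells $c_1\ne c_2$ and set $\zeta^i=\tau\setminus\{c_i\}$ for $i=1,2$. Each $\zeta^i$ is triangular and strictly below $\tau$; moreover, $c_2\in\zeta^1\setminus\zeta^2$ and $c_1\in\zeta^2\setminus\zeta^1$, so neither contains the other. Since $\tau$ is an upper bound of $\zeta^1$ and $\zeta^2$, we have $\zeta^1\lor\zeta^2\le\tau$, and since this join strictly exceeds $\zeta^1$ while $\tau$ covers $\zeta^1$, we conclude $\zeta^1\lor\zeta^2=\tau$. Hence $\tau$ is expressible as a join of two strictly smaller elements and is not join-irreducible.

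No real obstacle appears; the argument is a routine application of the characterization of join-irreducibles in a finite lattice, using only Lemmas~\ref{lem:removable-addable} and~4.2 that were recorded earlier. (Alternatively, Proposition~\ref{triang_join_meet_construction} gives a direct proof of $\zeta^1\lor\zeta^2=\tau$ in the two-removable-cell case, since $\bbN^2\cap\Conv(\zeta^1\cup\zeta^2)=\bbN^2\cap\Conv(\tau)=\tau$ by triangularity of $\tau$, but the covering-count argument above is shorter.)
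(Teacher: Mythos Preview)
Your approach is essentially the same as the paper's: show that partitions with one removable cell cover a unique element and hence cannot be a nontrivial join, while partitions with two removable cells are the join of the two elements they cover. You simply spell out the details that the paper leaves implicit.

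There is one factual slip to fix: $\TYP$ is \emph{not} finite (it contains triangular partitions of every size), so you cannot invoke the finite-lattice characterization of join-irreducibles directly, nor write ``since $\TYP$ is finite.'' What you actually need, and what does hold, is that every interval $[\zeta^i,\tau]$ is finite, because $\TYP$ is ranked by size. That is enough to guarantee that any $\zeta^i<\tau$ lies below some element covered by $\tau$, which is the only place finiteness is used. With that correction the argument goes through unchanged.
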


\begin{proof}
A triangular partition with one removable cell covers only one element in $\TYP$, so it cannot be the join of two elements other than itself. On the other hand, a triangular partition $\tau$ with two removable cells is the join of the two triangular partitions that it covers.
\end{proof}

\section{Bijections to balanced words and efficient generation} \label{sec:sturmian} 

In this section we will present two different encodings of triangular partitions in terms of factors of Sturmian words. The first one, which is hinted in~\cite{Bergeron2023}, is quite natural, and it will allow us to prove some enumeration formulas in Section~\ref{sec:subpartitions}.
The second one encodes families of triangular partitions by one single balanced word, along with two other parameters, and it will be used in Section~\ref{sec:algorithm} to implement efficient algorithms to count triangular partitions by their size.

\subsection{Sturmian words}

Sturmian words have applications in combinatorics, number theory, and dynamical systems; see \cite[Chapter 2]{Lothaire2002} for a thorough study. In the following definition, a word $w$ is called a \emph{factor} of another word $s$ if $s = uwt$ for some words $u$ and~$t$. 

\begin{definition}
    An infinite binary word $s$ is {\em Sturmian} if, for every $n\ge1$, the number of factors of $s$ of length $n$ equals $n + 1$.
\end{definition}

We will be interested in factors of Sturmian words, which have the following two useful characterizations.

\begin{proposition}[\cite{Lothaire2002}]\label{characterizations_of_sturmian_words}
    Let $w = w_1\dots w_\ell$ be a finite binary word over $\{0,1\}$. The following statements are equivalent:
    \begin{enumerate}[(a)]
        \item $w$ is a factor of some Sturmian word;
        \item $w$ is a \emph{balanced} word, that is, for any $h \leq \ell$ and $i,j \leq \ell - h + 1$, we have 
        \begin{equation}\label{eq:balanced}
        \left|\sum_{t = i}^{i + h - 1}w_t - \sum_{t = j}^{j + h - 1}w_t\right| \leq 1;
        \end{equation}
        \item $w$ is a \emph{mechanical} word, that is, there exist real numbers $0 < \alpha, \beta < 1$ such that $w_i = \lfloor i\alpha + \beta \rfloor - \lfloor (i - 1)\alpha + \beta \rfloor$ for $1\le i\le \ell$.
    \end{enumerate}
\end{proposition}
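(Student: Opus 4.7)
The plan is to prove the equivalences cyclically as (a) $\Rightarrow$ (b) $\Rightarrow$ (c) $\Rightarrow$ (a), following the classical treatment of Sturmian and mechanical words in Lothaire.

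For (a) $\Rightarrow$ (b), it suffices to show that every infinite Sturmian word $s$ is itself balanced, since any factor of a balanced word is balanced. I would argue by contradiction: assume $s$ contains two factors of the same length whose numbers of 1's differ by at least~$2$, and pick a pair $u,v$ of minimum such length $h$. Minimality forces $u$ and $v$ to begin with different letters and end with different letters (otherwise a prefix or suffix could be trimmed). A short counting argument then exhibits enough distinct factors of length $h$ or $h+1$ to force the factor complexity of $s$ to exceed $n+1$, contradicting the Sturmian condition.

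For (c) $\Rightarrow$ (b), the sum telescopes:
$$\sum_{t=i}^{i+h-1} w_t = \lfloor (i+h-1)\alpha + \beta\rfloor - \lfloor (i-1)\alpha + \beta\rfloor = \lfloor x + h\alpha\rfloor - \lfloor x\rfloor,$$
where $x = (i-1)\alpha + \beta$. This quantity is always either $\lfloor h\alpha\rfloor$ or $\lceil h\alpha\rceil$, depending on whether the fractional parts of $x$ and $h\alpha$ sum to less than~$1$ or not. Hence two such sums differ by at most~$1$, establishing~\eqref{eq:balanced}.

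The main obstacle is (b) $\Rightarrow$ (c). My approach is to consider the region
$$R(w) = \bigl\{(\alpha,\beta)\in(0,1)^2 \colon w_i = \lfloor i\alpha + \beta\rfloor - \lfloor (i-1)\alpha + \beta\rfloor \text{ for } 1\leq i\leq \ell\bigr\},$$
which equals the intersection of $(0,1)^2$ with the strips $s_i \leq i\alpha + \beta < s_i + 1$, where $s_i = w_1 + \dots + w_i$. This is a convex polygon, and the claim is that it is nonempty whenever $w$ is balanced. I would induct on $\ell$, showing that the balanced condition on $w_1\dots w_{\ell+1}$ translates exactly into the compatibility of the new strip with the previous region $R(w_1\dots w_\ell)$; the technical heart is to identify the vertex of the previous polygon that would be cut off and show that the balanced hypothesis prevents emptiness. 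An alternative route is to first extend $w$ to an infinite balanced word via K\H{o}nig's lemma on the binary tree of balanced extensions, yielding (b) $\Rightarrow$ (a), and then invoke the classical identification of infinite balanced words with mechanical sequences to obtain~(c). Finally, (c) $\Rightarrow$ (a) is immediate: extending the mechanical finite word to the infinite mechanical word with the same $\alpha,\beta$ (perturbing $\alpha$ to an irrational if necessary) produces a Sturmian word containing $w$ as a factor.
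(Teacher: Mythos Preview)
The paper does not prove this proposition; it is stated with attribution to \cite{Lothaire2002} and used as a black box. There is therefore no ``paper's own proof'' to compare your attempt against.

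As for your sketch itself: the overall architecture is the standard one and the pieces $(c)\Rightarrow(b)$ and $(a)\Rightarrow(b)$ are fine. Two points are underspecified. In $(b)\Rightarrow(c)$, your inductive claim that the new strip is compatible with $R(w_1\dots w_\ell)$ ``because of balancedness'' is exactly the nontrivial step, and you have not indicated why the relevant vertex survives; this is where the actual combinatorics of balanced words enters, and it is not a one-line check. In $(c)\Rightarrow(a)$, perturbing $\alpha$ to an irrational can change the first $\ell$ letters of the mechanical sequence, so you must argue that the region $R(w)\subset(0,1)^2$ is open (or at least has nonempty interior) so that an irrational $\alpha$ can be chosen without altering $w$; this is true but deserves a sentence. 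Your alternative route via K\H{o}nig's lemma for $(b)\Rightarrow(a)$ is cleaner and avoids both issues, provided you then quote the Morse--Hedlund theorem that infinite aperiodic balanced words coincide with Sturmian (equivalently, irrational mechanical) words.
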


We denote by $\cB$ the set of all words satisfying any of the above equivalent conditions, and by $\cB_\ell$ the set of those of length $\ell$.
Condition~\eqref{eq:balanced} states that, for any two consecutive subwords of $w$ of the same length, the number of ones in these subwords differs by at most~$1$.

Visualizing $w$ as a lattice path $P$ that starts at the origin and has $i$th step $(1,0)$ if $w_i = 0$, and $(1,1)$ if $w_i = 1$, the condition of $w$ being a mechanical word is equivalent to $P$ being the highest path with $\ell$ steps $(1,0)$ and $(1,1)$ starting at the origin and staying weakly below the line $y = \alpha x + \beta$ (see Figure \ref{fig:mechanical_balanced}). 

\begin{figure}[ht]
\centering
	\begin{tikzpicture}[scale=.7]
        \axes{9}{7};
        \draw[brown, thick] (9,6.15) -- (0,615/1100);
        \draw[thick] (0,0) -- (1,1) -- (2,1) -- (4,3) -- (5,3) -- (6,4) -- (7,4) -- (8,5);
        \draw[very thick] (-.1, 615/1100)--++(.2,0);
        \node at (-0.5,615/1100) {$\beta$};
        \node[brown] at (3.5,4.5) {$y=\alpha x+\beta$};
        \node at (7.5,-0.4) {$1$};
        \node at (6.5,-0.4) {$0$};
        \node at (5.5,-0.4) {$1$};
        \node at (4.5,-0.4) {$0$};
        \node at (3.5,-0.4) {$1$};
        \node at (2.5,-0.4) {$1$};
        \node at (1.5,-0.4) {$0$};
        \node at (0.5,-0.4) {$1$};
        \filldraw[black] (0,0) circle (1.5pt);
        \filldraw[black] (1,1) circle (1.5pt);
        \filldraw[black] (2,1) circle (1.5pt);
        \filldraw[black] (3,2) circle (1.5pt);
        \filldraw[black] (4,3) circle (1.5pt);
        \filldraw[black] (5,3) circle (1.5pt);
        \filldraw[black] (6,4) circle (1.5pt);
        \filldraw[black] (7,4) circle (1.5pt);
        \filldraw[black] (8,5) circle (1.5pt);
	\end{tikzpicture}
\caption{The mechanical word $10110101\in\cB$ viewed as a lattice path.}
\label{fig:mechanical_balanced}
\end{figure}

The following enumeration formula for balanced words is due to Lipatov~\cite{Lipatov1982}. Throughout the paper, we use $\varphi$ to denote Euler's totient function.

\begin{theorem}[\cite{Lipatov1982}]\label{lipatov}
The number of balanced words of length $\ell$ is
$$
|\cB_\ell|=1 + \sum_{i = 1}^\ell (\ell - i + 1)\varphi(i).
$$
\end{theorem}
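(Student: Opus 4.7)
The plan is to deduce Lipatov's formula as a byproduct of enumerating the triangular partitions fitting inside a square (equivalently, inside a staircase) of side $\ell$, which is the main goal of Section~\ref{sec:subpartitions}.

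First, I would set up a bijection between $\cB_\ell$ and the set of triangular partitions contained in a bounding region of size $\ell$ (the $\ell \times \ell$ square, or equivalently $\sigma^\ell$). The bijection is powered by the mechanical-word characterization of Proposition~\ref{characterizations_of_sturmian_words}: to a balanced word $w \in \cB_\ell$ with mechanical parameters $(\alpha,\beta)$ I would associate the triangular partition whose cutting line is determined by $(\alpha,\beta)$ inside the region. Conversely, every triangular partition in the region has a cone of cutting lines in the moduli space, and any line in that cone determines the same balanced word via its mechanical representation, giving the inverse map. Checking well-definedness reduces to verifying that two parameter pairs $(\alpha,\beta)$ produce the same mechanical word of length $\ell$ precisely when they correspond to the same triangular partition in the region.

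Second, I would enumerate the triangular partitions in the region by grouping them according to the primitive rational slope $p/q$ of one of the bounding rays of their cone of slope vectors, in the spirit of the moduli-space analysis from Section~\ref{sec:enumeration}. For each denominator $q=i$ with $1\le i\le \ell$, there are $\varphi(i)$ admissible numerators $p \in \{1,\dots,i\}$ coprime to $i$; for each such primitive slope $-p/q$, the number of distinct triangular partitions in the region whose cutting line can have that slope is precisely $\ell - i + 1$, corresponding to the allowed parallel translates of the cutting line within the bounding region. Summing yields $\sum_{i=1}^{\ell}(\ell - i + 1)\varphi(i)$, and the extra $+1$ accounts for the empty partition, which lies in the trivial cone of slopes before any primitive ray is crossed.

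The main obstacle will be the multiplicity count: showing cleanly that each primitive slope $(p,q)$ contributes exactly $\ell - i + 1$ distinct triangular partitions in the region. This requires a careful geometric argument, tracking how a cutting line of slope $-p/q$ passes through different configurations of lattice points of the region as it is translated parallel to itself, and verifying that the $\ell-i+1$ distinct partitions obtained are exactly those admitting a cutting line of this slope. Once this multiplicity is established, the bijection and the count combine to give Lipatov's formula directly.
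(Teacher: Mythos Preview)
Your proposed bijection in step one cannot exist: the two sets have different cardinalities. For instance, when $\ell=2$ one has $|\cB_2|=4$ (all four binary words of length~$2$ are balanced), whereas $|\Delta^{2\times 2}|=\I(\sigma^2)=5$ (the partitions $\epsilon,1,2,11,21$). In general the paper's Theorem~\ref{thm:staircase} gives $|\Delta^{\ell\times\ell}|=1+\sum_{i=1}^{\ell}\binom{\ell-i+2}{2}\varphi(i)$, which is strictly larger than $|\cB_\ell|$ for $\ell\ge 2$. So assigning to each $w\in\cB_\ell$ a triangular partition in the $\ell\times\ell$ square via its mechanical parameters cannot be injective onto that set, and the rest of the argument collapses.

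The correct relationship, established in Lemma~\ref{lem:h<=l,w=l}, is
\[
|\cB_\ell|-1 \;=\; |\Delta^{\ell\times\ell}|-|\Delta^{(\ell-1)\times(\ell-1)}|\;=\;\I(\sigma^\ell)-\I(\sigma^{\ell-1}),
\]
coming from the encoding $\omega$ of Proposition~\ref{sturmian_encoding_1}: balanced words of length $\ell$ starting with $1$ correspond bijectively to \emph{wide} triangular partitions of width \emph{exactly} $\ell$, not to all partitions fitting in the square. The paper then computes $\I(\sigma^\ell)$ independently---not by your slope-and-translate count, but via the encoding $\phi$ of Lemma~\ref{lem:phi_bijection} sending $\tau$ to $(a,b,d,e)$ (rightmost removable cell plus a primitive direction), followed by counting lattice points in the triangles $T^<_{d,e,\ell}$ and $T^\ge_{d,e,\ell}$ (Lemmas~\ref{LxL_triangle1}--\ref{LxL_sum_of_triangles}). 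Lipatov's formula then drops out by taking the difference $\I(\sigma^\ell)-\I(\sigma^{\ell-1})$. Your step-two heuristic that each primitive slope of denominator $i$ contributes $\ell-i+1$ partitions is therefore aimed at the wrong target; if it were counting $\Delta^{\ell\times\ell}$ the multiplicity would have to be $\binom{\ell-i+2}{2}$, not $\ell-i+1$, and if it is meant to count balanced words directly then the bijection to the square is irrelevant and the argument needs to be made precise on its own terms.
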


\subsection{Wide and tall partitions}

The following definition will be useful for our encodings of triangular partitions by balanced words.

\begin{definition}
A triangular partition is \emph{wide} (respectively \emph{tall}) if it admits a cutting line $\LL_{r,s}$ with $r > s$ (respectively $r < s$).
Denote the set of wide triangular partitions by $\Wide$.
\end{definition}

It follows from Lemma~\ref{charact_bergeron}, and in fact was already noted in~\cite{Corteel1999}, 
that the slopes of the cutting lines of any given triangular partition form an open interval.
Thus, every triangular partition must be wide, tall, or both.

It is immediate from the definition that a triangular partition $\tau$ is wide if and only if its conjugate $\tau'$ is tall. This is because $\LL_{r,s}$ is a cutting line for $\tau$ if and only if $\LL_{s,r}$ is a cutting line for $\tau'$.
Next we give alternative characterizations of wide partitions. We use the notation $[k]=\{1,2,\dots,k\}$.

\begin{lemma}
\label{lem:wide}
For any triangular partition $\tau = \tau_1\dots\tau_k$, the following are equivalent:
\begin{enumerate}[$(a)$]
    \item $\tau$ is wide,
    \item $\tau_1 \ge k$,
    \item the parts of $\tau$ are distinct.
\end{enumerate}
Separately, the following are equivalent:\begin{enumerate}[$(a')$]
    \item $\tau$ is wide and tall,
    \item $\tau_1=k$,
    \item $\tau$ is the staircase partition $\sigma^k$.
\end{enumerate}
\end{lemma}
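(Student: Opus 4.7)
The plan is to leverage the explicit parameterization $\tau_j = \lfloor r - jr/s \rfloor$ from Definition~\ref{def:triangular} to control the differences of consecutive parts in terms of the slope of a cutting line. The starting observation is that for any real $x$ and $d > 0$, $\lfloor x + d \rfloor - \lfloor x \rfloor \in \{\lfloor d \rfloor, \lceil d \rceil\}$, so if $\LL_{r,s}$ is a cutting line for $\tau$, then $\tau_j - \tau_{j+1} \in \{\lfloor r/s \rfloor, \lceil r/s \rceil\}$ for every $1 \le j < k$.

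Building on this, the proof rests on two key sub-claims. \emph{First}, if $\tau$ has a cutting line with $r \geq s$, then the parts of $\tau$ are all distinct, because $\lfloor r/s \rfloor \geq 1$ forces every difference to be at least $1$. \emph{Second}, if $\tau$ has a cutting line with $r < s$, then $\tau_k = 1$ and $\tau_1 \leq k$: the bound on $\tau_k$ comes from combining $\tau_k \geq 1$ (i.e., $r(s-k) \geq s$) with the fact that row $k+1$ is empty (i.e., $r(s-k-1) < s$), so that $r(s-k) = r(s-k-1) + r < s + r < 2s$, forcing $\tau_k = \lfloor r(s-k)/s \rfloor = 1$; the bound $\tau_1 \leq k$ then follows from $\lceil r/s \rceil = 1$ via the telescoping $\tau_1 \le \tau_k + (k-1)$.

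With these sub-claims in hand, the equivalences $(a) \Leftrightarrow (b) \Leftrightarrow (c)$ follow cleanly. The implication $(a) \Rightarrow (c)$ is immediate from the first sub-claim applied to a cutting line with $r > s$, and $(c) \Rightarrow (b)$ is the trivial observation $\tau_1 > \tau_2 > \cdots > \tau_k \geq 1$. For $(b) \Rightarrow (a)$, I argue by contradiction: if $\tau$ is not wide, every cutting line satisfies $r \leq s$, and by the openness of the set of cutting lines in the $(r,s)$-plane (which follows from the open interval $(t_\tau^-, t_\tau^+)$ in Lemma~\ref{charact_bergeron}), in fact every cutting line satisfies $r < s$ strictly. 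The second sub-claim then gives $\tau_1 \leq k$, which combined with $(b)$ yields $\tau_1 = k$ and $\tau_k = 1$; since consecutive differences lie in $\{0,1\}$ and sum to $k-1$ over $k-1$ steps, they must all equal $1$, so $\tau = \sigma^k$. But $\sigma^k$ is cut off by $\LL_{k+1,k+1}$, which has $r = s$, contradicting the strict inequality.

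For the second set of equivalences, $(c') \Rightarrow (b')$ is immediate, and $(b') \Rightarrow (c')$ follows by invoking $(b) \Rightarrow (c)$ to obtain distinct parts and then the telescoping argument to force $\tau = \sigma^k$. For $(c') \Rightarrow (a')$, the cutting line $\LL_{k+1,k+1}$ for $\sigma^k$ has $r = s$, and the openness of cutting lines lets us perturb in either direction to obtain both wideness and tallness. Finally, $(a') \Rightarrow (b')$ combines the first equivalences applied to both $\tau$ and its conjugate $\tau'$, using that $\LL_{r,s}$ cuts off $\tau$ iff $\LL_{s,r}$ cuts off $\tau'$: wideness of $\tau$ gives $\tau_1 \geq k$, while tallness of $\tau$ is equivalent to wideness of the triangular partition $\tau'$ (which has width $k$ and height $\tau_1$), whence the first equivalence yields $k \geq \tau_1$; together these give $\tau_1 = k$.

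The main obstacle is the second sub-claim: combining $\tau_k \geq 1$, $\tau_{k+1} = 0$, and $r < s$ in just the right telescoping way to pin $\tau_k$ to the single value $1$ is the one nonroutine step. Everything else is bookkeeping, relying on the openness of the cutting-line set and the conjugation symmetry $\tau \leftrightarrow \tau'$.
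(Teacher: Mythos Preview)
Your proof is correct. The route differs from the paper's mainly in how the first cycle of equivalences is closed. The paper argues $(b)\Rightarrow(c)$ by a geometric contrapositive (if $\tau_i=\tau_{i+1}=j$, any cutting line must pass weakly above $(j,i+1)$ and strictly below $(j+1,i)$, forcing $k\ge i+j>\tau_1$), establishes $(a)\Rightarrow(b)$ by directly comparing the two floor expressions $\tau_1=\lfloor r-r/s\rfloor$ and $k=\lfloor s-s/r\rfloor$, and finishes $(b)\Rightarrow(a)$ by a short case split together with the dichotomy ``not wide $\Rightarrow$ tall'' applied to $\tau'$. You instead run $(a)\Rightarrow(c)\Rightarrow(b)$ straight from the difference bound $\tau_j-\tau_{j+1}\in\{\lfloor r/s\rfloor,\lceil r/s\rceil\}$, and close $(b)\Rightarrow(a)$ via your second sub-claim (the computation that $r<s$ forces $\tau_k=1$ and $\tau_1\le k$), pinning $\tau$ to $\sigma^k$ and reaching a contradiction through the explicit cutting line $\LL_{k+1,k+1}$. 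Your argument is a bit more arithmetical with floor functions, the paper's a bit more geometric about cutting lines; your second sub-claim is a clean standalone fact the paper does not isolate, while the paper's direct comparison for $(a)\Rightarrow(b)$ avoids the detour through $\sigma^k$ and the appeal to openness of the slope interval. The treatments of the primed equivalences are essentially the same, both resting on conjugation.
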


\begin{proof}
    Let us start by proving $(b)\Rightarrow(c)$ by contrapositive. If $(c)$ does not hold, there exists $i\in[k-1]$ such that $\tau_i=\tau_{i+1}$. Let $j$ denote this value. Any cutting line for $\tau$ must pass weakly above $(j,i+1)\in\tau$ and strictly below $(j+1,i)\notin\tau$. It follows that it must pass above $(1,i+j)$ and below $(i+j,1)$, so $(1,i+j)\in\tau$ but $(i+j,1)\notin\tau$. We conclude that $k\ge i+j>\tau_1$, contradicting $(b)$.

    Next we prove that $(b')\Rightarrow(c')$. Suppose that $\tau_1=k=\tau_1'$. Applying the fact that $(b)\Rightarrow(c)$ to $\tau$ and to $\tau'$, it follows that both $\tau$ and $\tau'$ are partitions into distinct parts. This implies that $\tau$ is a staircase partition, since the upper-right boundary of its Young diagram cannot have two consecutive steps in the same direction. 
    
    To prove $(c)\Rightarrow(b)$, note that if the parts of $\tau$ are distinct, then $\tau_1 = \tau_k + \sum_{i = 1}^{k - 1}(\tau_i - \tau_{i + 1}) \ge k$. The implication $(c')\Rightarrow(b')$ is trivial.

    To prove $(a)\Rightarrow(b)$ and $(a')\Rightarrow(b')$, let $\LL_{r,s}$ be a cutting line for $\tau$. By Definition~\ref{def:triangular}, $\tau_1 = \lfloor r - r/s \rfloor$ and $k = \lfloor s - s/r \rfloor$. It follows that, if $r>s$, then $\tau_1\ge k$, and if $r<s$, then $\tau_1\le k$. In particular, if $\tau$ is wide, then $\tau_1\ge k$, and if $\tau$ is wide and tall, then $\tau_1=k$.

    Next we prove that $(b)\Rightarrow(a)$ and $(b')\Rightarrow(a')$. Suppose first that $\tau_1=k$. Then $\tau=\sigma^k$, which is wide and tall, so both $(a)$ and $(b)$ hold.
    
    Suppose now that $\tau_1>k=\tau_1'$. We claim that $\tau'$ cannot be wide in this case. Indeed, if it was, then the fact that $(a)\Rightarrow(b)$ applied to $\tau'$ would imply that $\tau'_1\ge\tau_1$, which is a contradiction. Since $\tau'$ is not wide, it must be tall, so $(a)$ holds. 
\end{proof}

\subsection{First Sturmian encoding}
\label{subsection:first-sturmian-encoding}

We are now ready to describe the first encoding of wide triangular partitions by balanced words. Given $\tau = \tau_1\dots\tau_k\in\Wide$, define the binary word
\begin{equation}\label{def:w}
\omega(\tau)=10^{\tau_1-\tau_2-1}10^{\tau_2-\tau_3-1}\dots10^{\tau_{k-1}-\tau_k-1}10^{\tau_k - 1}.
\end{equation}
The fact that all the parts of $\tau$ are distinct (by Lemma~\ref{lem:wide}) guarantees that the exponents are nonnegative. For example, we have $\omega(86531)=10110101$, as illustrated in Figure~\ref{fig:triangular-balanced}.

\begin{figure}[ht]
\centering
	\begin{tikzpicture}[scale=.7]
		\filldraw[color=black, fill=yellow, semithick] (0,0) -- (0,5) -- (1,5) -- (1,4) -- (3,4) -- (3,3) -- (5,3) -- (5,2) -- (6,2) -- (6,1) -- (8,1) -- (8,0) -- (0,0);
        \axes{10}{7}
        \draw[green, thick] (0,6.15) -- (9.9,0);
                \draw[very thick] (.1,6.15)--(-.1,6.15) node[left]{$s$};
         \draw[very thick] (9.9,.1)--(9.9,-.1) node[below]{$r$};
        \draw[black, semithick] (9,0) -- (8,1) -- (7,1) -- (5,3) -- (4,3) -- (3,4) -- (2,4) -- (1,5);
        \draw[black, semithick] (9 - 1/10, 615/1100) -- (9 + 1/10, 615/1100);
        \draw[black, semithick] (9 + 2.5/10, 615/1100) -- (9 + 4.5/10, 615/1100);
        \draw[black, semithick] (9 + 6/10, 615/1100) -- (9 + 8/10, 615/1100);
        \draw[black, semithick] (10 - 0.5/10, 615/1100) -- (10 + 1/10, 615/1100);
        \node at (10.5,615/1100) {$\beta$};
        \draw[black, semithick] (3/10, 328/55) -- (3/10, 6150/1100) -- (9/10, 6150/1100);
        \node at (-0.1,5.4) {$-\alpha$};
        \node at (1.5,-0.4) {$1$};
        \node at (2.5,-0.4) {$0$};
        \node at (3.5,-0.4) {$1$};
        \node at (4.5,-0.4) {$0$};
        \node at (5.5,-0.4) {$1$};
        \node at (6.5,-0.4) {$1$};
        \node at (7.5,-0.4) {$0$};
        \node at (8.5,-0.4) {$1$};
        \filldraw[black] (9,0) circle (1.5pt);
        \filldraw[black] (8,1) circle (1.5pt);
        \filldraw[black] (7,1) circle (1.5pt);
        \filldraw[black] (6,2) circle (1.5pt);
        \filldraw[black] (5,3) circle (1.5pt);
        \filldraw[black] (4,3) circle (1.5pt);
        \filldraw[black] (3,4) circle (1.5pt);
        \filldraw[black] (2,4) circle (1.5pt);
        \filldraw[black] (1,5) circle (1.5pt);
	\end{tikzpicture}
\caption{An example of the bijection $\omega$. The word should be read from right to left.}
\label{fig:triangular-balanced}
\end{figure}
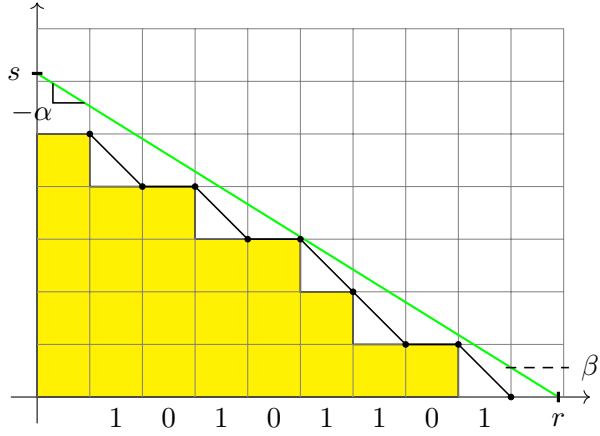

\begin{proposition}
\label{sturmian_encoding_1}
For every $k,\ell\ge1$,  the map $\omega$ from equation~\eqref{def:w} is a bijection 
$$\{\tau=\tau_1\dots\tau_k\in\Wide\mid \tau_1=\ell\}\to\{w=w_1\dots w_\ell\in\cB_\ell\mid \text{$w$ has $k$ ones and }w_1=1\}.$$
\end{proposition}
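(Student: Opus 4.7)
The plan is to prove $\omega$ is a bijection by identifying its natural inverse and showing that the triangular/balanced conditions match on the two sides. Reading off the positions $p_1 < \dots < p_k$ of the $1$s in $w = \omega(\tau)$ recovers the parts of $\tau$ via $\tau_i = \ell - p_i + 1$, so as a set map between partitions into $k$ distinct parts with largest part $\ell$ and binary words of length $\ell$ with $k$ ones beginning with $1$, $\omega$ is manifestly invertible. The real content is to show that, under this correspondence, $\tau$ is (wide) triangular if and only if $w$ is balanced, for which I will use the mechanical-word characterization in Proposition~\ref{characterizations_of_sturmian_words}(c).

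The key observation is that if $P$ is the lattice path of $w$ (with $1$s as up-steps and $0$s as right-steps), then its height after $j$ steps equals $w_1 + \dots + w_j$, which counts the parts of $\tau$ of size at least $\ell - j + 1$; that is, $\tau'_{\ell - j + 1}$. For a triangular $\tau$ cut off by $\LL_{r,s}$, applying Definition~\ref{def:triangular} to the conjugate gives $\tau'_m = \lfloor s - sm/r \rfloor$, so the height at abscissa $j$ equals $\lfloor j\alpha + \beta \rfloor$ with $\alpha = s/r$ and $\beta = s(r - \ell - 1)/r$. This is exactly the path of the mechanical word determined by $(\alpha,\beta)$, hence $w \in \cB_\ell$.

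For the converse, starting with any $w \in \cB_\ell$ with $k$ ones and $w_1 = 1$, I write $w$ as a mechanical word with parameters $\alpha, \beta \in (0,1)$, then set $r = \ell + 1 + \beta/\alpha$ and $s = \alpha r$. The hypotheses that $w_1 = 1$ and that $w$ has $k$ ones translate, via elementary floor manipulations, to $\lfloor r - r/s \rfloor = \ell$ and $\lfloor s - s/r \rfloor = k$, respectively, so $\LL_{r,s}$ cuts off a triangular partition with first part $\ell$ and $k$ parts, wide since $r > s$. Computing $\tau_i = \lfloor r - ir/s \rfloor = \ell + 1 - \lceil (i - \beta)/\alpha \rceil$ and comparing with the positions of the $1$s in $w$ (namely $p_i = \lceil (i-\beta)/\alpha \rceil$) confirms that $\omega(\tau) = w$.

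The main obstacle I anticipate is verifying that the parameters $(\alpha, \beta)$ lie strictly inside $(0,1)$, and carefully tracking the boundary cases of the floor and ceiling identities. In the forward direction, $\beta = 0$ would force $r = \ell + 1$, which combined with $\ell = \lfloor r - r/s \rfloor$ forces $s \ge r$ and contradicts $r > s$ (wideness); and $\beta < 1$ follows from $r - r/s < \ell + 1$. In the backward direction, the assumption $w_1 = 1$ is precisely $\alpha + \beta \ge 1$, which is what makes $\lfloor r - r/s \rfloor$ equal $\ell$ and not $\ell + 1$. Once these boundary checks are in place, everything reduces to routine algebra relating the geometric parameters $(r, s)$ of the cutting line to the arithmetic parameters $(\alpha, \beta)$ of the mechanical word.
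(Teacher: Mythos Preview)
Your proof is correct and follows essentially the same route as the paper: both directions hinge on the mechanical-word characterization with the very same parameters $\alpha = s/r$ and $\beta = s(r-\ell-1)/r$, and the inverse is the obvious position-of-ones map $\tau_i = \ell - p_i + 1$. The presentations differ only cosmetically: the paper phrases the forward direction geometrically (trace the path through the tops of the columns, reflect via $x \mapsto \ell+1-x$, and recognize the highest path under a line), whereas you compute the same thing algebraically by noting that the partial sum $w_1+\dots+w_j$ equals $\tau'_{\ell-j+1}$ and invoking Definition~\ref{def:triangular} for the conjugate. Likewise, for the converse the paper reflects the mechanical path back and reads off the cutting line, while you solve explicitly for $(r,s)$ from $(\alpha,\beta)$ and verify Definition~\ref{def:triangular} term by term; the boundary checks you flag ($\alpha+\beta\ge 1 \Leftrightarrow w_1=1$, and $0<\beta<1$ from $\ell = \lfloor r-r/s\rfloor$ together with $r>s$) are exactly the ones the paper handles in its verification that the intercept lies in $(0,1)$.
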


\begin{proof}
Let $\tau = \tau_1\dots\tau_k\in\Wide$.
It is clear by construction that $\omega(\tau)$ has length $\ell=\tau_1$, that it has $k$ ones, and that it starts with $1$. Let us show that this word is balanced.

Let $\LL_{r,s}$ be a cutting line for $\tau$ with $r>s$. Consider the lattice path from $(\tau_1 + 1,\; 0)$ to $(1, k)$
which passes through the highest point of each column of the Ferrers diagram of $\tau$ from right to left (see Figure \ref{fig:triangular-balanced}). Since the parts of $\tau$ are distinct, the steps of the path belong to $\{(-1,0), (-1,1)\}$, and since $\tau$ is a triangular partition, this is the highest such path that stays weakly below the line~$\LL_{r,s}$. 

By applying the vertical reflection $x\mapsto \tau_1+1-x$, the resulting path from $(0,0)$ to $(\tau_1, k)$ is the highest path with steps in $\{(1,0), (1,1)\}$ that stays weakly below the reflection of $\LL_{r,s}$, which is the line
$(\tau_1 + 1 - x)/r + y/s = 1$, or equivalently,
$$y=\frac{s}{r}\,x+s\left(1-\frac{\tau_1+1}{r}\right).$$
Note that $0 < s/r < 1$. Moreover, $0 < s(1 - (\tau_1 + 1)/r) < 1$, because it is the ordinate of the cutting line $\LL_{r,s}$ at $x = \tau_1 + 1$; this value must be positive because $(\tau_1, 1)\in\tau$ and $r > s$, and it must be less than $1$ because $(\tau_1 + 1,\; 1)\notin\tau$.

By the equivalence between balanced words and mechanical words given in Proposition~\ref{characterizations_of_sturmian_words}, this reflected path corresponds to the balanced word of length $\tau_1$ obtained by encoding steps $(1,0)$ with $0$ and steps $(1,1)$ with $1$. This word is precisely $\omega(\tau)$ by construction, proving that $\omega(\tau)\in\cB_\ell$.

Finally, we prove that $\omega$ is a bijection by constructing its inverse. Let $w\in\cB_\ell$ with $w_1=1$ and having $k$ ones, and let $1 = i_1<\dots <i_k$ be the indices of these ones. Define a partition $\tau = \tau_1\dots\tau_k$ by $\tau_j = \ell - i_j + 1$ for $j \in[k]$, and note that $\tau_1=\ell$. By Proposition~\ref{characterizations_of_sturmian_words}$(c)$, $w$ encodes a lattice path from $(0,0)$ to $(\ell,k)$ with steps in $\{(1,0),(1,1)\}$, which is the highest path staying weakly below the line $y = \alpha x + \beta$, for some $0<\alpha,\beta<1$. 

Applying the vertical reflection $x\mapsto \tau_1+1-x$, the resulting path from $(\ell + 1, 0)$ to $(1, k)$ passes through  the highest point of every column of the Ferrers diagram of $\tau$, and it is the highest path weakly below the reflected line (which is the line through $(\ell + 1, \beta)$ having slope $-\alpha$). This proves that $\tau$ is a wide triangular partition. By construction, the map that sends $w$ to $\tau$ is the inverse of $\omega$, proving that $\omega$ is a bijection.
\end{proof}

The bijection $\omega$ can be used to reduce the problem of determining whether a binary word is balanced to that of determining whether a partition is triangular, which we studied in Section~\ref{sec:triangularity_algorithm}. A naive algorithm 
to determine if a word is balanced using equation~\eqref{eq:balanced} would take quadratic time in the length of the word. However, several linear-time algorithms are known, using a variety of tools: number-theoretic methods~\cite{Boshernitzan1984}, techniques from discrete geometry and a bijection similar to the one in our Lemma~\ref{lem:phi_bijection} \cite{Bruckstein1993,Berstel1996}, a recursive  algorithm~\cite{Richomme1999}, and Lyndon words~\cite{deLuca2006,Reutenauer2015}.

Next we give yet another linear-time 
algorithm, based on the geometry of triangular partitions and the above encoding~$\omega$.
To be able to apply the inverse of this map, we need our word to start with a one. This can be easily achieved by replacing zeros with ones and ones with zeros if needed, since this operation preserves the balanced property from equation~\eqref{eq:balanced}.

\begin{algorithm}[to determine whether a binary word $w = w_1\dots w_\ell$ is balanced]
    \begin{enumerate}
        \item Read the input $w$, and record the indices $1 = i_1<\dots <i_k$ of the letters which are equal to $w_1$.
        \item Let $\lambda_j = \ell - i_j + 1$ for $j \in[k]$.
        \item Apply Algorithm~\ref{alg:triangular} to the partition $\lambda = \lambda_1\dots\lambda_k$.
    \end{enumerate}
\end{algorithm}

For a binary word of length $\ell$ with $k$ letters equal to the first one, reading the input takes time $\Theta(\ell)$ and the rest of the algorithm takes time $\Theta(k)$.

\subsection{Second Sturmian encoding}
\label{subsection:second-sturmian-encoding}

Next we give a different encoding of triangular partitions using balanced words, which appears to be new.
Let $\epsilon$ denote the empty partition, and let $\Wide'$ be the set of wide triangular partitions with at least two parts. Let $\cB^0$ denote the set of balanced words that contain at least one zero.

First we describe the possible sets that can be obtained by taking the differences of consecutive parts in a wide triangular partition.
For $\tau = \tau_1\dots\tau_k\in\Wide'$, define
$$\cD(\tau) = \{\tau_1 - \tau_2,\;\tau_2 - \tau_3, \dots,  \tau_{k-1} - \tau_k \}.$$

\begin{lemma}
\label{lemma_ell_ell+1}
For any $\tau = \tau_1\dots\tau_k\in\Wide'$, 
either $\cD(\tau) = \{d\}$ or $\cD(\tau) = \{d, d + 1\}$ for some $d\ge1$ such that $\tau_k\le d+1$.
\end{lemma}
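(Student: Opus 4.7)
The plan is to translate the claim into a statement about gaps between consecutive $1$'s in a balanced word, via the Sturmian encoding $\omega$ from Proposition~\ref{sturmian_encoding_1}. Setting $w = \omega(\tau) \in \cB_{\tau_1}$, the definition~\eqref{def:w} shows that $w$ has exactly $k$ ones; writing $g_i$ for the number of $0$'s between its $i$th and $(i+1)$th one (for $i<k$), and $g_k$ for the number of trailing $0$'s after the last one, one reads off
$$ g_i = \tau_i - \tau_{i+1} - 1 \quad (i < k), \qquad g_k = \tau_k - 1, $$
all of which are nonnegative by Lemma~\ref{lem:wide}.

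The first key step is to show that the inner gaps take at most two values, differing by at most $1$. If there were indices $i,j \in [k-1]$ with $g_j \geq g_i + 2$, then the factor of $w$ of length $g_i + 2$ that starts at the $i$th one contains exactly two ones, whereas another factor of the same length can be carved out from within the run of $g_j$ zeros and contains no ones. Their counts differ by $2$, contradicting the balanced condition~\eqref{eq:balanced}. Setting $d = 1 + \min_{i<k} g_i$, we then have $\tau_i - \tau_{i+1} \in \{d, d+1\}$ for every $i < k$, and $d \geq 1$ comes from the distinctness of the parts in Lemma~\ref{lem:wide}. This gives the first half of the statement.

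The same type of balanced-factor comparison bounds the trailing gap. If $g_k \geq g_i + 2$ for some $i < k$, a sub-factor of length $g_i + 2$ carved from the trailing run of $g_k$ zeros contains no ones, while the factor of the same length starting at the $i$th one contains two, again contradicting balance. Hence $g_k \leq g_i + 1$ for every $i < k$; taking $i$ that minimizes $g_i$ yields $g_k \leq d$, i.e., $\tau_k \leq d + 1$.

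The only subtlety I foresee is the degenerate case $k = 2$, where there is a single inner gap and the first half of the statement is automatic (since $\cD(\tau) = \{\tau_1-\tau_2\} = \{d\}$). In that case the bound $\tau_k \leq d + 1$ is still nontrivial, but the trailing-gap argument above goes through verbatim, using the one available inner gap to produce the comparison factor.
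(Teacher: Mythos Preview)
Your proof is correct, and it takes a genuinely different route from the paper's. The paper argues directly from a cutting line $\LL_{r,s}$: writing $\tau_i=\lfloor r-ir/s\rfloor$ gives $r/s-1<\tau_i-\tau_{i+1}<r/s+1$, so the differences lie in two consecutive integers; then the fact that $(1,k+1)$ lies above $\LL_{r,s}$ yields $\tau_k<r/s+1<d+2$. Your argument instead passes through the encoding $\omega$ of Proposition~\ref{sturmian_encoding_1} and reads both conclusions off the balanced condition~\eqref{eq:balanced} by comparing a two-one factor $10^{g_i}1$ against an all-zero factor of the same length carved from a longer gap.

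The paper's proof is more self-contained, needing only Definition~\ref{def:triangular} and Lemma~\ref{lem:wide}, whereas yours relies on the bijection $\omega$ already being in place. On the other hand, your approach is pleasantly combinatorial and makes transparent \emph{why} the bound $\tau_k\le d+1$ should hold: the trailing block of zeros in $\omega(\tau)$ is governed by the same balance constraint as the internal gaps. This also dovetails nicely with the factor comparisons used later in the proof of Theorem~\ref{thm:sturmian_encoding_2}.
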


\begin{proof}
Let $\LL_{r,s}$ be a cutting line for $\tau$. By Definition~\ref{def:triangular}, $\tau_{i + 1} = \left\lfloor{r - (i + 1)r/s}\right\rfloor$ and $\tau_i = \left\lfloor{r - ir/s}\right\rfloor$ for any $i\in[k-1]$, which implies that $r/s - 1 < \tau_i - \tau_{i + 1} < r/s + 1$. It follows that the differences $\tau_i - \tau_{i + 1}$ can take at most two values, and so either $\cD(\tau) = \{d\}$ or $\cD(\tau) = \{d, d + 1\}$ for some $d$. By Lemma~\ref{lem:wide}, the parts of $\tau$ are distinct, so $d\ge1$. Moreover, since the point $(1, k + 1)$ must lie above $\LL_{r,s}$, we have $\tau_k < r/s + 1 < d + 2$, hence $\tau_k \leq d + 1$.
\end{proof}

For $\tau = \tau_1\dots\tau_k\in\Wide'$, let $\min(\tau)=\tau_k$, and let $\dif(\tau) = \min\cD(\tau)$.
Finally, let $\wrd(\tau) = w_1\dots w_{k-1}$ where, for $i\in[k-1]$, we let $w_i = \tau_{i} - \tau_{i + 1} - \dif(\tau)$. Lemma~\ref{lemma_ell_ell+1} guarantees that $w_i\in\{0,1\}$ for all~$i$. Our second encoding is given by the map $\chi = (\min,\dif,\wrd)$. See Figure~\ref{fig:chi} for an example.

\begin{figure}[ht]
\centering
	\begin{tikzpicture}[scale=.7]
		\filldraw[color=black, fill=yellow, semithick] (0,0) -- (0,5) -- (1,5) -- (1,4) -- (4,4) -- (4,3) -- (7,3) -- (7,2) -- (9,2) -- (9,1) -- (12,1) -- (12,0) -- (0,0);
        \axes{14}{7}
        \node at (0.8,5.5) {$\min(\tau)$};
        \node at (2.5,4.5) {$\dif(\tau) + 1$};
        \node at (5.5,3.5) {$\dif(\tau) + 1$};
        \node at (8,2.5) {$\dif(\tau)$};
        \node at (10.5,1.5) {$\dif(\tau) + 1$};
        \node at (2.5,3.6) {$1$};
        \node at (5.5,2.6) {$1$};
        \node at (8,1.6) {$0$};
        \node at (10.5,0.6) {$1$};
	\end{tikzpicture}

\caption{The image of $\tau = (12,9,7,4,1)$ is $\chi(\tau) = (1, 2, 1011)$.}
\label{fig:chi}
\end{figure}
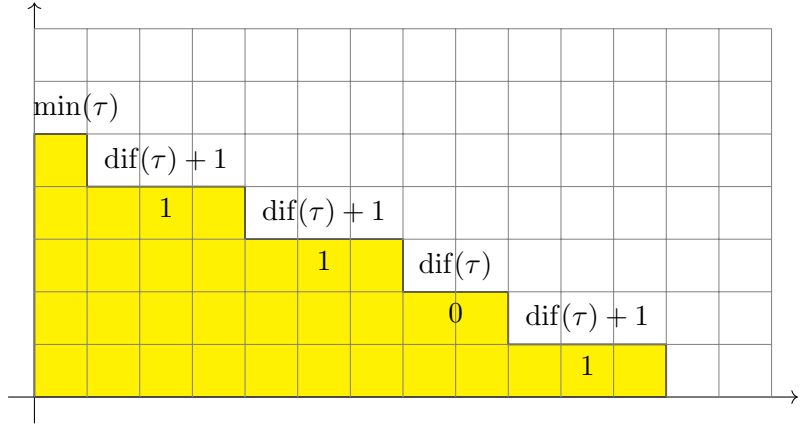

\begin{theorem}
    \label{thm:sturmian_encoding_2}
    The map $\chi = (\min,\dif,\wrd)$ is a bijection between $\Wide'$ and the set
    $$
    \cT = \{(m,d,w)\in\bbN\times\bbN\times\cB^0\mid m\le d + 1; \;  w1\in\cB^0\;\mathrm{if}\;m = d + 1\}.
    $$
    Its inverse is given by the map
    \begin{equation}\label{eq:xi}
    \xi(m,d,w_1\dots w_{k-1}) = \tau_1\dots\tau_k, \quad\text{where }\ \tau_i =  m + \sum_{j = i}^{k-1}(w_j+d)\  \text{ for }i\in[k].
    \end{equation}    
     Additionally, given $\tau\in\Wide'$ with image $\chi(\tau)=(m,d,w)$, its number of parts equals the length of $w$ plus one, and its size is
   \begin{equation}\label{eq:size}
   |\tau|=km + \binom{k}{2}d + \sum_{i = 1}^{k-1} iw_i.
   \end{equation}
\end{theorem}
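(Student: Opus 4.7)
\emph{Plan.} The strategy is to verify, in order, that (i) $\chi(\tau)\in\cT$ for every $\tau\in\Wide'$, (ii) $\xi(m,d,w)\in\Wide'$ for every $(m,d,w)\in\cT$, (iii) the two maps are mutually inverse, and finally (iv) the counting identities for the number of parts and size are immediate consequences.

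\emph{Well-definedness of $\chi$.} Let $\tau\in\Wide'$ and pick a cutting line $\LL_{r,s}$ with $r>s>0$. Lemma~\ref{lemma_ell_ell+1} already supplies $\cD(\tau)\subseteq\{d,d+1\}$ with $d\ge 1$ and $m=\tau_k\le d+1$, and also implies each letter of $w=\wrd(\tau)$ is $0$ or $1$, and that at least one letter equals $0$ (since $d=\min\cD(\tau)$). The nontrivial claim is that $w$ is balanced. I would prove this by writing $r/s=d+\alpha$ with $\alpha\in[0,1)$ and observing that
\[
w_i \;=\; (\tau_i-\tau_{i+1})-d \;=\; \lfloor r-ir/s\rfloor-\lfloor r-(i+1)r/s\rfloor-d \;=\; \lfloor i\alpha+\beta\rfloor-\lfloor(i-1)\alpha+\beta\rfloor
\]
for a suitable $\beta\in[0,1)$ obtained by absorbing $r$ into the fractional parts. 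Thus $w$ is a mechanical word, hence balanced by Proposition~\ref{characterizations_of_sturmian_words}. For the boundary case $m=d+1$: the point $(1,k+1)\notin\tau$ lies strictly above $\LL_{r,s}$, and combining $\tau_k=d+1$ with this fact forces the would-be $(k+1)$-st mechanical bit to equal $1$; appending it to $w$ preserves the mechanical form, so $w1\in\cB^0$.

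\emph{Well-definedness of $\xi$ and inverse property.} Given $(m,d,w)\in\cT$, the partition $\tau=\xi(m,d,w)$ has $k=|w|+1\ge 2$ parts (since $w$ contains a zero), $\tau_k=m\ge 1$, and strictly decreasing consecutive differences $\tau_i-\tau_{i+1}=w_i+d\in\{d,d+1\}\subseteq\bbN$, so it is a partition into distinct parts. To promote this to triangularity and wideness, I would run the previous construction backwards: since $w$ is balanced, it is mechanical, so there exist $\alpha,\beta\in(0,1)$ realizing $w_i=\lfloor i\alpha+\beta\rfloor-\lfloor(i-1)\alpha+\beta\rfloor$, and choosing $r,s$ with $r/s=d+\alpha$ together with an appropriate vertical shift produces a line satisfying $\tau_i=\lfloor r-ir/s\rfloor$. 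The inequality $m\le d+1$ (and the sharper condition $w1\in\cB^0$ in the equality case) is exactly what is needed to enforce $k=\lfloor s-s/r\rfloor$, i.e., that the line cuts off \emph{exactly} $k$ parts. Then by Lemma~\ref{lem:wide}, $\tau\in\Wide\cap\Wide'$. The verifications $\xi\circ\chi=\id$ and $\chi\circ\xi=\id$ follow immediately from the telescoping identity $\tau_i=m+\sum_{j=i}^{k-1}(\tau_j-\tau_{j+1})$.

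\emph{Size formula and main obstacle.} The parts count is immediate: $k=|w|+1$. For the size, summing \eqref{eq:xi} and swapping the order of summation gives
\[
|\tau|=\sum_{i=1}^k\tau_i = km+\sum_{i=1}^{k}\sum_{j=i}^{k-1}(w_j+d) = km+\sum_{j=1}^{k-1}j(w_j+d) = km+\binom{k}{2}d+\sum_{i=1}^{k-1}iw_i,
\]
which is \eqref{eq:size}. The technical heart of the proof is the translation between the mechanical form $\lfloor i\alpha+\beta\rfloor-\lfloor(i-1)\alpha+\beta\rfloor$ and the triangular floor formula $\lfloor r-ir/s\rfloor$ in \emph{both} directions; in particular, carefully tracking fractional parts to identify the correct $\beta$, and pinning down the role of the boundary condition $m=d+1$ (which encodes the partition terminating exactly at the cutting line, so that the next mechanical bit is forced to be $1$). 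Once this correspondence is in hand, everything else is routine bookkeeping.
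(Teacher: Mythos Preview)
Your overall architecture matches the paper's proof exactly: show $\chi(\Wide')\subseteq\cT$, show $\xi(\cT)\subseteq\Wide'$, check mutual inversion, then read off the size formula by telescoping. The size computation and the inversion checks are identical to the paper's.

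There is one genuine technical difference, and it hides a small gap. To show $w=\wrd(\tau)$ is balanced, you claim
\[
w_i=\lfloor r-ir/s\rfloor-\lfloor r-(i+1)r/s\rfloor-d=\lfloor i\alpha+\beta\rfloor-\lfloor(i-1)\alpha+\beta\rfloor
\]
for some $\beta\in(0,1)$. Writing $r/s=d+\alpha$ and extracting the integer $id$, what actually falls out is
\[
w_i=\lfloor r-i\alpha\rfloor-\lfloor r-(i+1)\alpha\rfloor=\lceil(i+1)\alpha-r\rceil-\lceil i\alpha-r\rceil,
\]
which is an \emph{upper} mechanical sequence, not of the floor form in Proposition~\ref{characterizations_of_sturmian_words}(c); the ``suitable $\beta$'' does not materialize directly because the index $i$ appears with the wrong sign. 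The paper sidesteps this in two complementary ways: in the $\chi$ direction it verifies inequality~\eqref{eq:balanced} directly, using $|\lfloor a\rfloor-\lfloor b\rfloor-\lfloor d\rfloor+\lfloor e\rfloor|\le 1$ when $a-b=d-e$; in the $\xi$ direction it applies the mechanical characterization to the \emph{reversed} word $w_{k-1}\cdots w_1$ (balanced iff $w$ is), which fixes the sign and yields the explicit $r=m+\beta+k(d+\alpha)$, $s=r/(d+\alpha)$. Your sketch of the $\xi$ direction (``run the construction backwards'') is correct in spirit but needs this same reversal to produce an honest $\lfloor r-ir/s\rfloor$; the paper also splits off the case $m=d+1$ explicitly, using $w1\in\cB^0$ to get the mechanical parameters, exactly as you anticipate.
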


\begin{proof}
    We start by proving that $\chi(\Wide') \subseteq \cT$. Let $\tau = \tau_1\dots\tau_k\in\Wide'$, and let $w = w_1\dots w_{k-1} = \wrd(\tau)$.
    First, let us show that $w\in\cB^0$. It is clear that $w$ must have a $0$, because if it consisted of only ones, we would have $\tau_i - \tau_{i + 1} = \dif(\tau) + 1$ for all $i\in[k - 1]$, by definition of $\wrd$, which would imply that $\cD(\tau) = \{\dif(\tau) + 1\}$, contradicting the definition of $\dif$. To prove that $w$ is balanced, we show that inequality~\eqref{eq:balanced} holds for any $h\le k-1$ and $i,j \leq k - h$. Since $w_t+\dif(\tau)=\tau_{t} - \tau_{t + 1}$ for all $t\in[k-1]$, we can rewrite the left-hand side as a telescoping sum
    \begin{align*}
    \left|\sum_{t = i}^{i + h - 1}w_t - \sum_{t = j}^{j + h - 1}w_t\right|&=\left|\sum_{t = i}^{i + h - 1}(w_t+\dif(\tau)) - \sum_{t = j}^{j + h - 1}(w_t+\dif(\tau))\right|=\\
    &= \left| \sum_{t = i}^{i + h - 1}(\tau_{t} - \tau_{t + 1}) - \sum_{t = j}^{j + h - 1}(\tau_{t} - \tau_{t+1}) \right| 
    = \left|(\tau_{i} - \tau_{i + h}) - (\tau_{j} - \tau_{j + h})\right|.
    \end{align*}
    By Definition~\ref{def:triangular}, there exist $r,s > 0$ such that $\tau_{t} = \lfloor r - tr/s \rfloor$ for $t\in[k]$, so we can express the above difference as
    $$
    \left|\lfloor r - ir/s \rfloor - \lfloor r - (i + h)r/s \rfloor 
    - \lfloor r - jr/s \rfloor + \lfloor r - (j + h)r/s \rfloor\right|,$$
    which is bounded above by~$1$, using that $(a - 1) - b - d + (e - 1) <\lfloor a\rfloor - \lfloor b\rfloor - \lfloor d \rfloor + \lfloor e\rfloor< a - (b - 1) - (d - 1) + e$ for any real numbers $a,b,d,e$.

    By Lemma \ref{lemma_ell_ell+1},  $\min(\tau) \le \dif(\tau) + 1$.
    It remains to show that, if $\min(\tau) = \dif(\tau) + 1$, then $w1\in\cB^0$. In this case, with the convention $w_k = 1$ and $\tau_{k + 1} = 0$, we have that $w_t = \tau_{t} - \tau_{t+ 1} - \dif(\tau)$ also for $t=k$, and so we can apply the above argument to the word $w_1\dots w_{k-1}w_k=w1$ to conclude that it belongs to $\cB^0$. This finishes the proof that $\chi(\Wide') \subseteq \cT$.\smallskip
    
    Next we show that $\xi(\cT) \subseteq \Wide'$. Let $(m,d,w)\in\cT$, where $w=w_1\dots w_{k-1}$, and let $\tau = \tau_1\dots\tau_k = \xi(m,d,w)$. Since $w$ cannot be empty, we have $k\ge2$.
    
    Consider first the case $m \le d$. Notice that $w_1\dots w_{k-1}$ is balanced if and only if $w_{k-1}w_{k-2}\dots w_1$ is balanced. By Proposition~\ref{characterizations_of_sturmian_words}$(c)$ applied to $w_{k-1}w_{k-2}\dots w_1$, there exist $0 < \alpha, \beta < 1$ such that $w_{k-i} = \lfloor \beta + i\alpha \rfloor - \lfloor \beta + (i - 1)\alpha \rfloor$ for $i\in[k - 1]$. Then, for $i\in[k]$,
    \begin{align*}
            \tau_i &= m + \sum_{j = i}^{k - 1} (w_j+d) 
            = m + (k - i)d  + \sum_{j = i}^{k - 1} \left( \lfloor \beta + (k - j)\alpha \rfloor - \lfloor \beta + (k - j - 1)\alpha \rfloor \right)  \\
            & = m + (k - i)d + \lfloor \beta + (k - i)\alpha \rfloor - \lfloor \beta \rfloor  
            = \lfloor m + \beta + (k - i)(d + \alpha)\rfloor. 
    \end{align*}
    Letting $r = m + \beta + k(d + \alpha)$ and $s = r/(d + \alpha)$, we have $\tau_i = \lfloor r - ir/s \rfloor$. Moreover, $\lfloor s - s/r \rfloor = \lfloor (m + \beta - 1)/(d + \alpha) \rfloor + k = k$, using the assumption that $m \le d$. By Definition~\ref{def:triangular}, this proves that $\tau$ is triangular with cutting line $\LL_{r,s}$. Since $r > s$, we have $\tau\in\Wide'$.

    Now consider the case $m = d + 1$. By the definition of $\cT$, we have $w' := w1\in\cB^0$. Arguing as above, there exist $0 < \alpha', \beta' < 1$ such that $w'_{k+1 - i'} = \lfloor \beta' + i\alpha' \rfloor - \lfloor \beta' + (i - 1)\alpha' \rfloor$ for $i\in[k]$. Then,
    $$
    \tau_i =  m + \sum_{j = i}^{k - 1} (w_j+d) = d+1+(k-i)d+\sum_{j = i}^{k - 1} w'_{j}
    =(k - i + 1)d + \sum_{j = i}^{k}w'_{j} = \lfloor \beta' + (k - i + 1)(d + \alpha')\rfloor.
    $$
    Letting $r' = \beta' + (k + 1)(d + \alpha')$ and $s' = r'/(d + \alpha')$, we have $\tau_i = \lfloor r' - ir'/s' \rfloor$. Moreover, $\lfloor s' - s'/r' \rfloor = \lfloor (\beta' - 1)/(d + \alpha') \rfloor + k + 1 = k$. This proves that $\tau$ is triangular with cutting line $\LL_{r',s'}$. Since $r' > s'$, we have $\tau\in\Wide'$. This finishes the proof that $\xi(\cT) \subseteq \Wide'$.\smallskip

    Next we show that $\xi$ and $\chi$ are inverses of each other. Let $(m,d,w)\in\cT$, with $w = w_1\dots w_{k-1}$, and let $\tau = \tau_1\dots\tau_k = \xi(m,d,w)$.  By definition of $\xi$, we have $\min(\tau) = \tau_k= m$ and $\cD(\tau) = \{w_i+d \mid  i\in[k-1]\}$. Since $w\in\cB^0$, there exists $i\in[k-1]$ such that $w_i = 0$, and therefore $\dif(\tau) = d$. Additionally, the $i$th entry of $\wrd(\tau)$ is $\tau_{i} - \tau_{i + 1} - \dif(\tau) = d + w_i - d = w_i$ for $i\in[k-1]$. This proves that $\chi(\xi(m,d,w)) = (m,d,w)$.

    Now take any $\tau = \tau_1\dots\tau_k\in\Wide'$. To show that $\xi(\chi(\tau))=\tau$, let $w=\wrd(\tau)$, and note that the $i$th entry of $\xi(\chi(\tau))$ is $\min(\tau) + \sum_{j = i}^{k - 1}(w_j+\dif(\tau)) = \tau_k + \sum_{j = i}^{k - 1}(\tau_j - \tau_{j + 1}) = \tau_i$.

    Finally, to prove equation~\eqref{eq:size} for any $\tau\in\Wide'$ with $\chi(\tau)=(m,d,w)$, we use the definition of the inverse map $\xi$ from equation~\eqref{eq:xi}. Adding all the parts of $\tau$, 
    $$|\tau| = |\xi(m,d,w)| = \sum_{i = 1}^k\left(m + \sum_{j = i}^{k - 1}(w_j+d)\right) = km + \binom{k}{2}d + \sum_{i = 1}^{k - 1} iw_i.\qedhere$$
\end{proof}

Before we discuss how to use Theorem~\ref{thm:sturmian_encoding_2} to generate triangular partitions, we finish this subsection describing another application of the above encoding. 

\begin{proposition}
\label{equiv_chi3_removable}
Let $\tau,\nu\in\Wide'$ with $k$ parts such that $\wrd(\tau) = \wrd(\nu)$, and suppose that the values $\min(\tau) - \dif(\tau)$ and $\min(\nu) - \dif(\nu)$ are either both equal to $1$ or both less than $1$.
Then, for every $i\in[k]$, the cell $c = (\tau_i, i)$ is a removable cell of $\tau$ if and only if $c' = (\nu_i, i)$ is a removable cell of $\nu$.
\end{proposition}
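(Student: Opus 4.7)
My plan is to show that, in each of the two cases of the hypothesis, whether the corner cell $(\tau_i,i)$ is removable depends only on $w:=\wrd(\tau)=\wrd(\nu)$, $k$, and~$i$. Since $\tau$ and $\nu$ share $w$ and $k$ and lie in the same case, the claim will follow. Throughout, I write $\chi(\tau)=(m_\tau,d_\tau,w)$ and $\chi(\nu)=(m_\nu,d_\nu,w)$, and set $W_j:=\sum_{l=j}^{k-1}w_l$.

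First I treat the case $m_\tau\le d_\tau$ (equivalently $\min(\tau)-\dif(\tau)<1$). Revisiting the proof of Theorem~\ref{thm:sturmian_encoding_2}, every cutting line of $\tau$ is of the form $x+(d_\tau+\alpha)y=m_\tau+\beta+k(d_\tau+\alpha)$ for some $(\alpha,\beta)\in(0,1)^2$ with $w_{k-1}\dots w_1$ the mechanical word of parameters $(\alpha,\beta)$; a short computation shows that the auxiliary condition $k=\lfloor s-s/r\rfloor$ is automatic under $m_\tau\le d_\tau$, so the set of admissible pairs is a region $R_w\subseteq(0,1)^2$ depending only on $w$, and this region is identical for $\nu$.

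Next, a lattice cell $(a,b)\in\tau$ lies on this cutting line iff $a-m_\tau-(k-b)d_\tau=\beta+(k-b)\alpha$. Since $a$ is an integer and $\lfloor\beta+(k-b)\alpha\rfloor=W_b$ throughout $R_w$, this forces $\beta+(k-b)\alpha=W_b$ and hence $a=\tau_b$; consequently cutting lines only meet $\tau$ at corner cells, and the line at $(\alpha,\beta)$ passes through $(\tau_j,j)$ iff the linear condition $\beta+(k-j)\alpha=W_j$ holds in the parameter plane. By Lemma~\ref{line_touching_removable_cell}, $(\tau_i,i)$ is then removable iff some $(\alpha,\beta)\in\overline{R_w}$ satisfies $\beta+(k-i)\alpha=W_i$ but $\beta+(k-j)\alpha\ne W_j$ for all $j\ne i$; since every quantity in this criterion depends only on $w$, $k$, and~$i$, removability of $(\tau_i,i)$ in $\tau$ and of $(\nu_i,i)$ in $\nu$ coincide. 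The case $m_\tau=d_\tau+1$ is handled entirely analogously, now parameterizing cutting lines via the mechanical-word representation of the reverse of $w':=w1$, with removability reduced to the condition $\beta'+(k-i+1)\alpha'=W'_i$ (where $W'_i=\sum_{j=i}^{k}w'_j$) on a region $R_{w'}$ depending only on~$w'$.

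The main technical subtlety I foresee is the boundary behaviour of the parameter region: certain removable cells (for instance $(\tau_k,k)$, which corresponds to $\beta=0$) arise only as limits in $\overline{R_w}$ rather than in the open square $(0,1)^2$, so one must verify that these limiting cutting lines still separate each partition from its complement. The hypothesis of the proposition is precisely what is needed here: the case split ensures that the condition $k=\lfloor s-s/r\rfloor$ persists uniformly along the relevant portions of $\partial R_w$ for $\tau$ and $\nu$ simultaneously, so that exactly the same parameters witness removability in both partitions.
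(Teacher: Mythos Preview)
Your approach and the paper's are the same idea in different coordinates. The paper takes a cutting line $x-\tau_i+a(y-i)=0$ for $\tau$ touching only $(\tau_i,i)$ and constructs the line $x-\nu_i+(a+d_\nu-d_\tau)(y-i)=0$ for $\nu$; in your variables $\alpha=r/s-d$, $\beta=r-k\,r/s-m$ this is precisely ``use the same $(\alpha,\beta)$'', and the paper's key identity $\nu_j-\nu_i+(a+d_\nu-d_\tau)(j-i)=\tau_j-\tau_i+a(j-i)$ is exactly your claim that the corner-cell conditions depend only on~$w$.

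Your sketch, however, has genuine gaps. First, the removability criterion is misstated: a point of $\overline{R_w}$ with $\beta+(k-j)\alpha=W_j+1$ for some $j$ meets your stated conditions yet corresponds to a line through $(\tau_j+1,j)\in\bbN^2\setminus\tau$, hence not a cutting line. The correct region is the half-open set where all the inequalities $\beta+(k-j)\alpha<W_j+1$ are strict; the face $\beta+(k-i)\alpha=W_i$ already lies in that set, so no limiting argument is needed and the ``boundary subtlety'' you flag is illusory. Second, your account of the hypothesis is off the mark. The case split has nothing to do with $\partial R_w$; it governs the one constraint not captured by $R_w$, namely that $(1,k+1)$ lie strictly above the line, which in your variables reads $\beta<1+d_\tau-m_\tau+\alpha$. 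When $m_\tau\le d_\tau$ (and $w$ contains a one, forcing $\alpha>0$) this is automatic; when $m_\tau=d_\tau+1$ it becomes $\beta<\alpha$. Either way the constraint depends only on the case, not on $(m,d)$ individually, which is exactly why $\tau$ and $\nu$ must lie in the same case---and why the detour through $w'=w1$ is unnecessary. Third, the argument fails outright for $w=0^{k-1}$: a cutting line touching only $(\tau_1,1)$ then has $r/s<d$, hence $\alpha<0$, so your criterion (confined to $(0,1)^2$) would wrongly declare $(\tau_1,1)$ non-removable. The paper handles this case by a separate direct argument.
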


\begin{proof}
Let $\chi(\tau)=(m, d, w)$ and $\chi(\nu)=(m', d', w)$, where $\chi$ is the map from Theorem~\ref{thm:sturmian_encoding_2}.  
In the special case where $w$ consists of only zeros, all the corner cells $(\tau_i,i)$ of $\tau$ lie on the cutting line $x-m+d(y-k)=0$, and all the corner cells of $\nu$ lie on the cutting line $x-m'+d'(y-k)=0$. It follows that the removable cells of $\tau$ are $(\tau_1,1)$ and $(\tau_k,k)$, and the removable cells of $\nu$ are $(\nu_1,1)$ and $(\nu_k,k)$. Thus, the result holds in this case. In the rest of the proof we will assume that $w$ contains some one.

By symmetry, it suffices to show that if $c$ is removable in $\tau$, then $c'$ is removable in $\nu$.
By Lemma~\ref{line_touching_removable_cell}, if $c$ is a removable cell of $\tau$, there is a cutting line $L$ that passes through $c$ and so that all the other corner cells $(\tau_j,j)$ for $j\ne i$  lie strictly below $L$, and the cells $(1,k+1)$ and $(\tau_j+1,j)$ for all $j$  lie strictly above $L$. Writing the equation of $L$ as $x-\tau_i+a(y-i)=0$ for some positive real number $a$, these conditions are equivalent to 
$-1<\tau_j-\tau_i+a(j-i)<0$ for $j\neq i$, and
$1-\tau_i+a(k+1-i)>0$.

Let $L'$ be the line with equation $x-\nu_i+(a+d'-d)(y-i)=0$. This line passes through $c'$. We will show that all the other corner cells $(\nu_j,j)$ for $j\ne i$ lie strictly below $L'$, and the cells $(1,k+1)$ and $(\nu_j+1,j)$ for all $j$ lie strictly above $L'$, thus proving that $L'$ is a cutting line for $\nu$ and $c'$ is a removable cell. This is equivalent to showing that 
\begin{equation}\label{eq:nuj} -1<\nu_j-\nu_i+(a+d'-d)(j-i)<0
\end{equation} 
for $j\neq i$, and
\begin{equation}\label{eq:1k+1} 1-\nu_i+(a+d'-d)(k+1-i)>0.
\end{equation}

Equation~\eqref{eq:nuj} follows from the analogous equation for $\tau_j$, since the description of $\xi$ from equation~\eqref{eq:xi} implies that  
\begin{equation}\label{eq:nujtauj} 
\nu_j-\tau_j=m' + \sum_{\ell = j}^{k - 1}(w_\ell+d')-
m - \sum_{\ell = j}^{k - 1}(w_\ell+d)=m'-m+(k-j)(d'-d),
\end{equation}
and so
$$\nu_j-\nu_i+(a+d'-d)(j-i)=\tau_j-\tau_i+(i-j)(d'-d)+(a+d'-d)(j-i)=\tau_j-\tau_i+a(j-i).$$

To prove equation~\eqref{eq:1k+1} in the case $m-d=m'-d'=1$, we again use equation~\eqref{eq:nujtauj} to write
$$1-\nu_i+(a+d'-d)(k+1-i)=m-d-(m'-d')+1-\tau_i+a(k+1-i)=1-\tau_i+a(k+1-i)>0.$$

It remains to show that, in the case that both $m-d$ and $m'-d'$ are less than $1$, the cell $(1,k+1)$ also lies strictly above $L'$. By the above assumption, there is some $j$ such that $w_j=1$. Then, the fact that the cell $(\nu_j,j)$ lies weakly below $L'$ but the cell $(\nu_{j+1}+1,j+1)$ lies strictly above $L'$ forces the slope of $L'$ to be smaller (in absolute value) than the slope of the line between these two cells; equivalently, $a+d'-d>\nu_j-(\nu_{j+1}+1)=d'$. Thus, since the cell $(\nu_k+1,k)=(m'+1,k)$ lies strictly above $L'$, so does the cell $(m'+1-d',k+1)$, and, using that $m'-d'\le0$, the cell $(1,k+1)$ must lie strictly above~$L'$.
\end{proof}

As a consequence of the above result, we can determine the removable cells of any $\tau = \tau_1\dots\tau_k\in\Wide'$ by instead finding the removable cells of the smallest partition $\nu$ satisfying the conditions of Proposition~\ref{equiv_chi3_removable}. 
Letting $w=\wrd(\tau)$, this is the partition $\xi(2, 1, w)$ if $\min(\tau) - \dif(\tau) = 1$, or $\xi(1,1,w)$ otherwise, where $\xi$ is given by equation~\eqref{eq:xi}.
After finding the removable cells of $\nu$, we can use Proposition \ref{equiv_chi3_removable} to determine those of $\tau$.
For triangular partitions $\tau$ that are not wide, we can apply the same procedure to $\tau'$, which must be wide, noting that a cell $c = (a,b)$ is removable from $\tau$ if and only if $c' = (b,a)$ is removable from $\tau'$. For example, to find the removable cells of $\tau = (5^{576},4^{1037},3^{1037},2^{1036},1^{1037})$ (where the exponents indicate the multiplicities of the parts), we take $\tau' = (4723, 3686, 2650, 1613, 576)$, which has $\min(\tau') = 576$, $\cD(\tau') = \{1036,1037\}$, and $\dif(\tau') = 1036$. Thus, $\wrd(\tau') = 1011$, so we consider the much smaller partition $\nu = \xi(1,1,1011) = (8,6,5,3,1)$, whose only removable cell is $(5,3)$. By Proposition~\ref{equiv_chi3_removable}, the only removable cell of $\tau'$ is $(2650, 3)$, and so the only removable cell of $\tau$ is $(3, 2650)$.

\subsection{Efficient generation} \label{sec:algorithm}

Next we describe an efficient algorithm to generate triangular partitions, which relies on our new encoding of such partitions as balanced words from Section~\ref{subsection:second-sturmian-encoding}. 

At the time of writing this paper, the entry of the On-Line Encyclopedia of Integer Sequences~\cite[A352882]{oeis} for the number triangular partitions of $n$ only included values for $n\le 39$. These are the terms that appear in~\cite{Corteel1999}, where they were obtained using the generating function in Theorem~\ref{thm:GDelta}. We have checked that an algorithm based on this generating function becomes very slow when trying to compute the next terms of the sequence, and it is impractical for large~$n$.

Theorem~\ref{thm:sturmian_encoding_2} can be used to implement the following much more efficient algorithm, which can easily find the first $10^5$ terms of the sequence. 

\begin{algorithm}[to compute $|\Delta(n)|$ for $1\le n\le N$]
\begin{enumerate}
\item Perform a depth first search through the tree of balanced words of length up to $\lfloor \sqrt{2N} \rfloor$ (this is because a wide triangular partition $\tau$ with $k$ parts has size at least $\binom{k + 1}{2}$). 
The nodes of this tree correspond to balanced words, and the parent of a nonempty balanced word is the balanced word obtained by removing its last letter. Each $w\in\cB$ has one or two children, because at least one of the words $w0$ and $w1$ must be balanced. Recall from equation~\eqref{eq:balanced} that a binary word $w$ of length $\ell$ is balanced if, for every $h\le \ell$,
all the factors of length $h$ have the same number of ones, or some factors have $a$ ones and some have $a+1$ ones, for some $a$.
For $w\in\cB_\ell$, to determine whether $w0$ and $w1$ are balanced, we keep a vector that records, for each $h\le\ell$, whether all the factors of length $h$ have the same number of ones, or otherwise, whether the rightmost factor of $w$ has $a$ or $a+1$ ones. This vector 
can be used to determine whether $w0$ and $w1$ are balanced, and if they are, to compute the vectors corresponding to these words in time $\bigO(\ell)$. We also keep a variable $n_{\min} = \binom{\ell + 1}{2} + \sum_{i = 1}^{\ell} iw_i$ that is updated in constant time every time a letter is added.
\item For each $w\in\cB_\ell$ (other than $w=1^\ell$) found in step~1, after checking whether $w0$ and $w1$ are balanced and before continuing the exploration of the tree, search in lexicographic order through the pairs $(d,m)\in\bbN^2$ such that $(m, d, w)\in\cT$, as defined in Theorem~\ref{thm:sturmian_encoding_2}. For each pair, use equation~\eqref{eq:size} to calculate the size in constant time as $n_{\min} + \binom{\ell + 1}{2}(d - 1) + (\ell + 1)m$. If this size is greater than $N$, then if $m > 1$ we move on to the pair $(d+1,1)$, whereas if $m = 1$ we are done with $w$, so we move on to the next word in the tree.
\item Each triplet $(m,d,w)$ with size smaller than or equal to $N$ accounts for two triangular partitions, namely $\tau=\chi(m,d,w)$ and its conjugate $\tau'$, except when $w=0^{k-1}$ (for some $k\ge2$) and $m = d$, in which case it accounts for only one partition, the staircase $\sigma^k$.
\end{enumerate}
\end{algorithm}

A C++ implementation of this algorithm is available at \cite{website}.
In a standard laptop computer, it yields the first $10^3$ terms of the sequence $|\Delta(n)|$ in under one second, the first $10^4$ terms in under ten seconds, and the first $10^5$ terms in under one hour. Next we study the running time of our algorithm.
By comparison, Onn and Sturmfels showed in~\cite{Onn1999} that it is possible to generate triangular partitions of $n$ in time $\bigO(n^4)$.

\begin{proposition}
    The above algorithm finds $|\Delta(n)|$ for $1\le n\le N$ in time $\bigO(N^{5/2})$. Additionally, it can be modified to generate all (resp., all wide) triangular partitions of size at most $N$ in time $\bigO(N^{3}\log N)$ (resp., $\bigO(N^{5/2}\log N)$). 
\end{proposition}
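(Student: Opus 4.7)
The plan is to split the running time into two parts: the cost of the depth-first search through the tree of balanced words (step~1) and the cost of the inner loop over pairs $(d,m)$ (steps~2 and~3). Throughout, set $L=\lfloor\sqrt{2N}\rfloor$; the length of any word $w$ processed by the algorithm is at most $L$, since equation~\eqref{eq:size} implies that a wide triangular partition with $k=\ell+1$ parts has size at least $\binom{k+1}{2}$.

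For the DFS, I would first apply Lipatov's formula (Theorem~\ref{lipatov}) together with the classical estimate $\sum_{i=1}^{\ell}\varphi(i)=\bigO(\ell^2)$ to get $|\cB_\ell|=\bigO(\ell^3)$. Each balanced word of length $\ell\le L$ is visited exactly once, and extending the auxiliary vector (which records, for each $h\le\ell$, the number of ones in the rightmost factor of length $h$ and whether this count is constant over all such factors) together with updating $n_{\min}$ takes $\bigO(\ell)$ time when a new letter is appended. Summing gives
\[
\sum_{\ell=0}^{L}\ell\,|\cB_\ell|=\sum_{\ell=0}^{L}\bigO(\ell^4)=\bigO(L^5)=\bigO(N^{5/2}).
\]

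For step~2, Theorem~\ref{thm:sturmian_encoding_2} guarantees that each triplet $(m,d,w)\in\cT$ with $|\xi(m,d,w)|\le N$ corresponds to a distinct wide triangular partition of size at most $N$, of which there are at most $\sum_{n\le N}|\Delta(n)|=\bigO(N^2\log N)$ by Theorem~\ref{thm:corteel}. Equation~\eqref{eq:size} forces $d=\bigO(N/\ell^2)$ for words of length $\ell=|w|$, so the number of ``overshoot'' pairs examined but discarded per word is $\bigO(N/\ell^2)$; summing over $\cB_\ell$ and $\ell\le L$ contributes $\bigO(\sum_\ell \ell N)=\bigO(N^2)$. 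Because each pair is processed in $\bigO(1)$ time in the counting setting, step~2 takes $\bigO(N^2\log N)=o(N^{5/2})$ time, and the total counting complexity is $\bigO(N^{5/2})$.

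For the generation variants, I would note that each produced wide partition $\tau$ has $k=\bigO(\sqrt N)$ parts, each recovered in constant time by accumulating the partial sums in equation~\eqref{eq:xi}, so writing $\tau$ costs $\bigO(\sqrt N)$; multiplying by the $\bigO(N^2\log N)$ wide partitions yields $\bigO(N^{5/2}\log N)$. To list \emph{all} triangular partitions, the conjugate $\tau'$ must also be output, and it has up to $\tau_1=\bigO(N)$ parts, adding $\bigO(N)$ per partition for a total of $\bigO(N^3\log N)$. The main obstacle is the careful accounting of the overshoot in step~2 (showing that it really is absorbed into the $\bigO(N^{5/2})$ bound and does not introduce an extra factor) and verifying that the vector update in the DFS can genuinely be done in $\bigO(\ell)$ per node; both are routine once the data structure is spelled out, but they are what makes the $N^{5/2}$ bound tight rather than $N^{5/2}\log N$ or worse.
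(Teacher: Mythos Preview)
Your proof is correct and follows essentially the same approach as the paper: bound the DFS cost by $\sum_{\ell\le L}\ell\,|\cB_\ell|=\bigO(L^5)=\bigO(N^{5/2})$, bound the inner loop by the number of triangular partitions of size at most $N$ (which is $\bigO(N^2\log N)$ via Theorem~\ref{thm:corteel}) plus an overshoot term, and then charge the extra $\bigO(\sqrt N)$ or $\bigO(N)$ per output partition for the generation variants. The only cosmetic difference is that you derive $|\cB_\ell|=\bigO(\ell^3)$ directly from Lipatov's formula, whereas the paper cites an external result for $\sum_{\ell\le L}|\cB_\ell|=\bigO(L^4)$; and your overshoot bound of $\bigO(N/\ell^2)$ per word (summing to $\bigO(N^2)$) replaces the paper's observation that each overshoot triplet either has $m=1$ (one per word, hence $\bigO(N^2)$ total) or is immediately preceded by a valid triplet (hence $\bigO(N^2\log N)$ total).
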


\begin{proof}
    For each word $w$ of length $\ell\le\sqrt{2N}$ in the tree of balanced words, the algorithm makes $\bigO(\ell)$ comparisons to determine whether appending a $0$ or a $1$ produces a balanced word. In \cite{Akiyama2022}, it is proved that the number of balanced words of length at most $L$ is $\bigO(L^4)$. Therefore, the total number of comparisons performed in step~1 is $\bigO(N^{5/2})$.

    In step~2, for each balanced word $w$ containing at least one zero, the algorithm runs through pairs $(d,m)$ such that $(m,d,w)\in\cT$. For each one of these, the algorithm computes $|\tau|$ using equation~\eqref{eq:size} and checks if $|\tau|\le N$ in constant time. By Theorem~\ref{thm:corteel}, the number of triangular partitions of size at most $N$ is $\bigO(N^2\log N)$. Additionally, the number of  ``incorrect'' visited triplets $(m,d,w)$ for which $|\tau| > N$ is also $\bigO(N^2\log N)$, since those with $m=1$ come from different words $w$ (and we visit $\bigO(N^2)$ of them), and those with $m > 1$ came after a ``correct'' triplet $(m-1,d,w)$, i.e., corresponding to a triangular partition of size at most $N$ (and there are $\bigO(N^2\log N)$ of these). We conclude that the algorithm runs in time $\bigO(N^{5/2})$.

    To output the list of triangular partitions, for each $(m,d,w)$ that the algorithm finds, one can compute $\tau=\xi(m,d,w)$ using equation~\eqref{eq:xi} in time $\bigO(\sqrt{N})$, and its conjugate $\tau'$ in time $\bigO(N)$. Since the number of triangular partitions that are computed is $\bigO(N^2\log N)$, this construction generates the wide ones in time $\bigO(N^{5/2}\log N)$, and their conjugates in time $\bigO(N^3\log N)$.
\end{proof}

In their proof of Theorem~\ref{thm:corteel}, Corteel et al.~\cite{Corteel1999} show that $\pp(n/8)/3 \leq|\Delta(n)| \leq \pp(2n + 1)$, where $\pp(m)$ is the number of relatively prime pairs $(a,b)$ such that $ab < m$. In fact, it is possible to slightly improve the lower bound to $\pp(n/2)/3$, by tweaking the same argument that they use, but noting that if $n > 2ab$, then the set $H((a,b))$ defined in~\cite{Corteel1999} contains at least two lattice points.
 
\begin{figure}[ht]
    \centering
    \includegraphics[width=11cm]{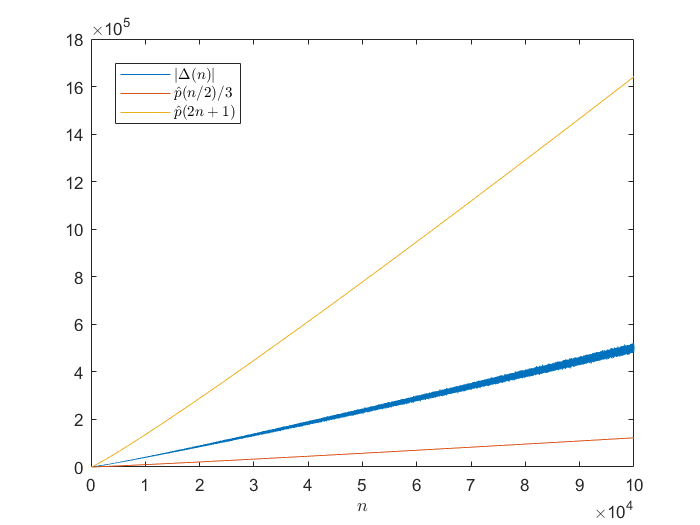}
    \caption{The sequence $|\Delta(n)|$ and the bounds $\pp(2n + 1)$ and $\pp(n/2)/3$ for $n\le10^5$.}
    \label{fig:tp_bounds}
\end{figure}

In Figure \ref{fig:tp_bounds}, the first $10^5$ terms of the sequence $|\Delta(n)|$ are plotted against these bounds $\pp(2n + 1)$ and $\pp(n/2)/3$.  While the constants $C$ and $C'$ (as in Theorem~\ref{thm:corteel}) that result from these bounds are far from tight,
Figure \ref{fig:tp_bounds_nlogn} suggests that, for large $n$, the value of 
$|\Delta(n)|/(n\log n)$ oscillates between two decreasing functions that differ by about $0.05$.

\begin{figure}[ht]
    \centering
    \includegraphics[width=11cm]{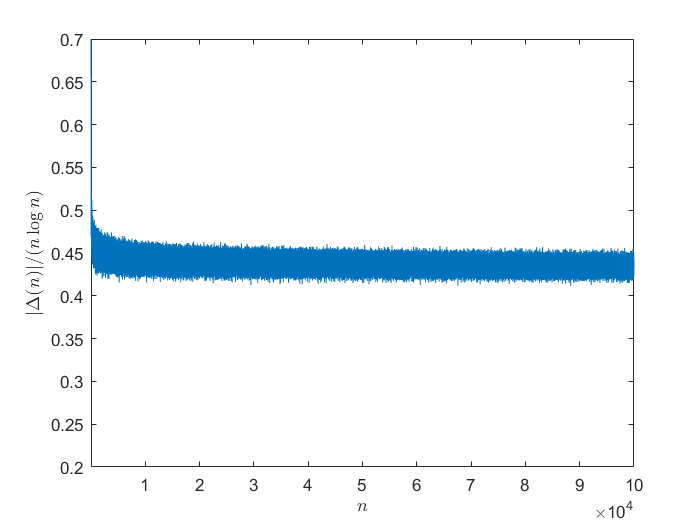}
    \caption{The values $|\Delta(n)|/(n\log n)$ for $n\le10^5$.}
    \label{fig:tp_bounds_nlogn}
\end{figure}

\section{Generating functions for subsets of triangular partitions}
\label{sec:generating-functions}

Let us introduce notation for some subsets of triangular partitions. Let $\Delta_1$ and $\Delta_2$ denote the subsets of partitions with one removable cell and with two removable cells, respectively. Let $\Delta^1$ and $\Delta^2$ denote the subsets of partitions with one addable cell and with two addable cells, respectively. Let $\Delta_2^2=\Delta_2\cap\Delta^2$.
Denote partitions of size $n$ in each subset by $\Delta_1(n)$, $\Delta_2(n)$, $\Delta^1(n)$, $\Delta^2(n)$ and $\Delta_2^2(n)$. 
In this section we give generating functions for each of these sets, refining Theorem~\ref{thm:GDelta}.

To obtain a generating function for partitions in $\Delta_2$, we modify the construction that Corteel et al.~\cite{Corteel1999} used to prove Theorem~\ref{thm:GDelta} for all triangular partitions. 
In our case, each $\tau\in\Delta_2$ is uniquely determined by the following parameters, as given by Definition~\ref{def:diagonal}: its diagonal slope (encoded by an irreducible fraction $a/b$), the number of cells in $\partial_\tau$ (denoted by $k = |\partial_\tau|$), the $x$-coordinate of the leftmost cell in $\partial_\tau$ (denoted by $i+1$), and the $y$-coordinate of the rightmost cell in $\partial_\tau$  (denoted by $j+1$).

Decomposing $\tau$ as shown in Figure~\ref{fig:Delta2} and counting cells as in Section~\ref{sec:enumeration}, we see that the size of $\tau$ is $N_{\Delta}(a,b,k,k,i,j)$,
with $N_\Delta$ as given by equation~\eqref{eq:NDelta}. Bearing in mind that $k\geq2$ because the diagonal contains both removable cells, we obtain the following generating function for $\Delta_2$.

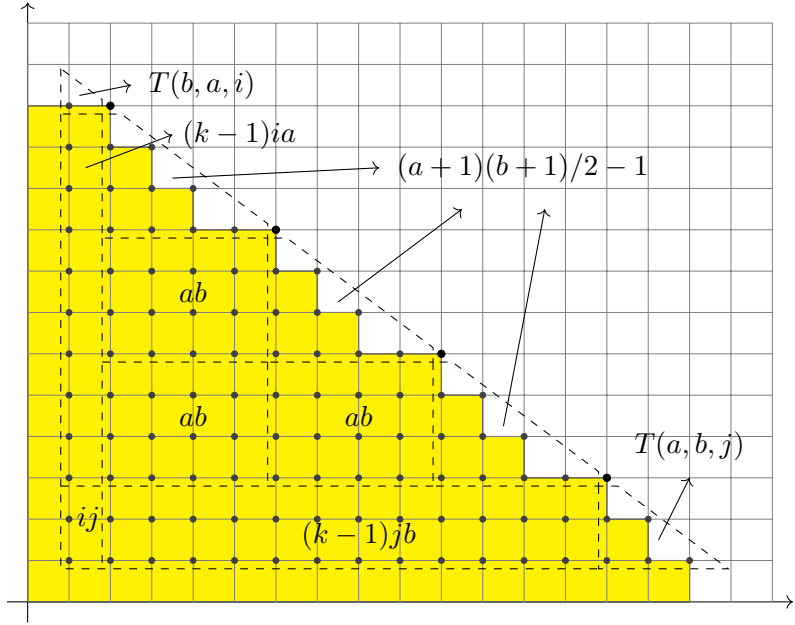
\begin{figure}[ht]
\centering
	\begin{tikzpicture}[scale=.55]
		\filldraw[color=black, fill=yellow, thin] (0,0) -- (0,12) -- (2,12) -- (2,11) -- (3,11) -- (3,10) -- (4,10) -- (4,9) -- (6,9) -- (6,8) -- (7,8) -- (7,7) -- (8,7) -- (8,6) -- (10,6) -- (10,5) -- (11,5) -- (11,4) -- (12,4) -- (12,3) -- (14,3) -- (14,2) -- (15,2) -- (15,1) -- (16,1) -- (16,0) -- (0,0);
        \axes{18}{14}
        \draw[dashed] (1 - 1/5, 12 + 9/10) -- (1 - 1/5, 12 - 1/5) -- (2 - 1/5, 12 - 1/5) -- (2 - 1/5, 12 + 3/20) -- (1 - 1/5, 12 + 9/10);
        \foreach \i in {0,1,2}
            {\draw[dashed] (2 - 1/5 + 4*\i, 12 - 1/5 - 3*\i) -- (2 - 1/5 + 4*\i, 9 - 1/5 - 3*\i) -- (6 - 1/5 + 4*\i, 9 - 1/5 - 3*\i) -- (6 - 1/5 + 4*\i, 9 + 3/20 - 3*\i) -- (2 + 4/15 + 4*\i, 12 - 1/5 - 3*\i) -- (2 - 1/5 + 4*\i, 12 - 1/5 - 3*\i);}
        \draw[dashed] (14 - 1/5, 3 - 1/5) -- (14 - 1/5, 1 - 1/5) -- (16 + 14/15, 1 - 1/5) -- (14 + 4/15, 3 - 1/5) -- (14 - 1/5, 3 - 1/5);
        \draw[dashed] (1 - 1/5, 12 - 1/5) -- (1 - 1/5, 1 - 1/5) -- (14 - 1/5, 1 - 1/5);
        \draw[dashed] (2 - 1/5, 9 - 1/5) -- (2 - 1/5, 1 - 1/5);
        \draw[dashed] (1 - 1/5, 3 - 1/5) -- (10 - 1/5, 3 - 1/5);
        \draw[dashed] (2 - 1/5, 6 - 1/5) -- (6 - 1/5, 6 - 1/5);
        \draw[dashed] (6 - 1/5, 6 - 1/5) -- (6 - 1/5, 3 - 1/5);
        \filldraw[black] (4,7) circle (1.5pt) node[above]{$ab$};
        \filldraw[black] (4,4) circle (1.5pt) node[above]{$ab$};
        \filldraw[black] (8,4) circle (1.5pt) node[above]{$ab$};
        \filldraw[black] (2,2) circle (1.5pt) node[left]{$ij$};
        \filldraw[black] (8,1) circle (1.5pt) node[above]{$(k - 1)jb$};
        \draw[->] (1.4, 10.5) -- (3.5, 11.3) node[right]{$(k-1)ia$};
        \foreach \x in {1,...,16}
            {\filldraw[gray!150] (\x,1) circle (2pt);}
        \foreach \x in {1,...,15}
            {\filldraw[gray!150] (\x,2) circle (2pt);}
        \foreach \x in {1,...,13}
            {\filldraw[gray!150] (\x,3) circle (2pt);}
        \filldraw[black] (14,3) circle (2.5pt);
        \foreach \x in {1,...,12}
            {\filldraw[gray!150] (\x,4) circle (2pt);}
        \foreach \x in {1,...,11}
            {\filldraw[gray!150] (\x,5) circle (2pt);}
        \foreach \x in {1,...,9}
            {\filldraw[gray!150] (\x,6) circle (2pt);}
        \filldraw[black] (10,6) circle (2.5pt);
        \foreach \x in {1,...,8}
            {\filldraw[gray!150] (\x,7) circle (2pt);}
        \foreach \x in {1,...,7}
            {\filldraw[gray!150] (\x,8) circle (2pt);}
        \foreach \x in {1,...,5}
            {\filldraw[gray!150] (\x,9) circle (2pt);}
        \filldraw[black] (6,9) circle (2.5pt);
        \foreach \x in {1,...,4}
            {\filldraw[gray!150] (\x,10) circle (2pt);}
        \foreach \x in {1,...,3}
            {\filldraw[gray!150] (\x,11) circle (2pt);}
        \filldraw[gray!150] (1,12) circle (2pt);
        \filldraw[black] (2,12) circle (2.5pt);
        \draw[->] (3.5, 10.25) -- (8.5, 10.5) node[right, outer sep=3pt]{$(a + 1)(b + 1)/2 - 1$};
        \draw[->] (7.5, 7.25) -- (10.5, 9.5);
        \draw[->] (11.5, 4.25) -- (12.5, 9.5);
        \draw[->] (1.25, 12.25) -- (2.5, 12.5) node[right, outer sep=3pt]{$T(b,a,i)$};
        \draw[->] (15.25, 1.5) -- (16, 3) node[above, outer sep=3pt]{$T(a,b,j)$};
	\end{tikzpicture}
\caption{Decomposition of a partition in $\Delta_2$.}
\label{fig:Delta2}
\end{figure}

\begin{proposition}\label{prop:GDelta_2}
The generating function for triangular partitions with two removable cells is
$$
G_{\Delta_2}(z) = \sum_{n\ge0}|\Delta_2(n)|z^n = \sum_{\gcd(a, b) = 1}\sum_{\substack{0\leq j < a \\ 0\leq i < b}}\sum_{k\ge2}z^{N_{\Delta}(a,b,k,k,i,j)}.
$$
\end{proposition}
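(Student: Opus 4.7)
The plan is to specialize the bijective decomposition from Corteel et al.~\cite{Corteel1999} recalled in Section~\ref{sec:enumeration}, restricting attention to triangular partitions with two removable cells. By Definition~\ref{def:diagonal} and Proposition~\ref{charact_trcp_tacp}, a partition $\tau$ lies in $\Delta_2$ exactly when its diagonal $\partial_\tau$ contains at least two cells, and in that case the two endpoints of $\partial_\tau$ are the removable cells of $\tau$, while all $k:=|\partial_\tau|\ge2$ cells of $\partial_\tau$ are consecutive lattice points on a line of rational slope which does not meet $\Conv(\bbN^2\setminus\tau)$.

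Given $\tau\in\Delta_2$, I would associate to it the tuple $(a,b,k,i,j)$ just as in Figure~\ref{fig:dec_tp}: write the slope of the diagonal line as $-a/b$ with $\gcd(a,b)=1$ and $a,b\ge1$, set $k=|\partial_\tau|\ge2$, and let $i+1$ and $j+1$ be, respectively, the $x$-coordinate of the leftmost cell and the $y$-coordinate of the rightmost cell of $\partial_\tau$. The inequalities $0\le i<b$ and $0\le j<a$ encode the fact that pushing the diagonal by one more lattice step beyond either endpoint would exit $\bbN^2$; equivalently, $\partial_\tau$ is the entire intersection of the diagonal line with $\bbN^2$. The inverse map sends $(a,b,k,i,j)$ to the partition consisting of all lattice points weakly below the line through $(i+1,\,j+1+(k-1)a)$ and $(i+1+(k-1)b,\,j+1)$; this line is a cutting line by construction, so the resulting $\tau$ is triangular, and by Proposition~\ref{charact_trcp_tacp} its two extreme lattice points on the line are removable, placing $\tau$ in $\Delta_2$ with diagonal recovering $(a,b,k,i,j)$.

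Finally, the size identity $|\tau|=N_\Delta(a,b,k,k,i,j)$ is obtained by reading off the cells of $\tau$ according to the decomposition in Figure~\ref{fig:Delta2}, which is identical to Figure~\ref{fig:dec_tp} except that the parameter $m$ (the number of cells on the cutting line that belong to $\tau$) now takes its maximum value $k$, so that the summand ``$m$'' of equation~\eqref{eq:NDelta} becomes ``$k$''. Summing $z^{N_\Delta(a,b,k,k,i,j)}$ over the described indexing set then yields the claimed generating function. The main subtlety I expect is the bijection argument: one must confirm that for any tuple in the index set, the intermediate lattice points on the diagonal line are not vertices of $\Conv(\tau)$ (and hence by Lemma~\ref{lemma_vertices} are not removable), so that only the two extreme cells are removable and thus the constructed $\tau$ genuinely lies in $\Delta_2$.
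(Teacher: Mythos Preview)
Your proposal is correct and follows essentially the same route as the paper: parametrize $\tau\in\Delta_2$ by the data $(a,b,k,i,j)$ of its diagonal $\partial_\tau$, observe that the size equals $N_\Delta(a,b,k,k,i,j)$ via the decomposition in Figure~\ref{fig:Delta2}, and sum over the index set. One small remark: the ``main subtlety'' you flag at the end is not actually an issue, since Lemma~\ref{lem:removable-addable} already guarantees that a triangular partition has at most two removable cells, so once you have exhibited two removable cells there can be no others.
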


Note that the term $\frac{1}{1 - z}$ that appeared in Theorem~\ref{thm:GDelta} is not included here, since its purpose was to account for partitions with one part, but those have only one removable cell.
Our next result shows how to obtain generating functions for all the other cases, in terms of the expressions in Theorem~\ref{thm:GDelta} and Proposition~\ref{prop:GDelta_2}.

\begin{proposition}
The generating functions for partitions in $\Delta_1$, $\Delta^2$, $\Delta^1$, $\Delta_2^2$ can be written in terms of $G_\Delta(z)$ and $G_{\Delta_2}(z)$ as follows:
\begin{align}
\label{eq:GDelta_1}
    G_{\Delta_1}(z) &= G_{\Delta}(z) - G_{\Delta_2}(z)-1,\\
\label{eq:GDelta^2}
    G_{\Delta^2}(z) &= \frac{1 - z}{z}G_\Delta(z) + \frac{1}{z}G_{\Delta_2}(z) - \frac{1}{z},\\
\label{eq:GDelta^1}
    G_{\Delta^1}(z) &= \frac{2z - 1}{z}G_\Delta(z) - \frac{1}{z}G_{\Delta_2}(z) + \frac{1}{z},\\
\label{eq:GDelta_2^2}
    G_{\Delta_2^2}(z) &= \frac{1 - 2z}{z}G_\Delta(z) + \frac{1 + z}{z}G_{\Delta_2}(z) - \frac{1}{z}.
\end{align}
\end{proposition}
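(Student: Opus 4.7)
The plan is to derive all four identities from two inputs: the trichotomy of Lemma~\ref{lem:removable-addable} (a nonempty triangular partition has $(1,2)$, $(2,1)$, or $(2,2)$ removable-addable cells), and a double count of edges in the Hasse diagram of $\TYP$.

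First, I will record the set partitions that Lemma~\ref{lem:removable-addable} yields. Since the empty partition has one addable cell (namely $(1,1)$) and no removable cells, and every nonempty triangular partition belongs to exactly one of $\Delta_1\cap\Delta^2$, $\Delta_2\cap\Delta^1$, $\Delta_2^2$, the following are disjoint unions:
\begin{equation*}
\Delta=\{\epsilon\}\sqcup\Delta_1\sqcup\Delta_2,\qquad
\Delta=\Delta^1\sqcup\Delta^2,\qquad
\Delta_2=\Delta^1\sqcup\Delta_2^2.
\end{equation*}
Equation~\eqref{eq:GDelta_1} is immediate from the first of these, since $|\Delta(0)|=1$ and $|\Delta_1(0)|=|\Delta_2(0)|=0$.

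Next I will set up a bijection on edges of the Hasse diagram of $\TYP$. Pairs $(\nu,c)$ where $\nu\in\Delta(n)$ and $c$ is an addable cell of $\nu$ are in bijection with pairs $(\tau,c')$ where $\tau\in\Delta(n+1)$ and $c'$ is a removable cell of $\tau$, via $\tau=\nu\cup\{c\}$, $c'=c$. Counting these pairs in two ways and using the trichotomy above gives
\begin{equation*}
|\Delta^1(n)|+2|\Delta^2(n)|=|\Delta_1(n+1)|+2|\Delta_2(n+1)|\qquad(n\ge 0).
\end{equation*}
Multiplying by $z^n$, summing over $n\ge 0$, and using $|\Delta_1(0)|=|\Delta_2(0)|=0$ to shift the right-hand index, I obtain
\begin{equation*}
G_{\Delta^1}(z)+2G_{\Delta^2}(z)=\frac{1}{z}\bigl(G_{\Delta_1}(z)+2G_{\Delta_2}(z)\bigr)=\frac{1}{z}\bigl(G_\Delta(z)+G_{\Delta_2}(z)-1\bigr),
\end{equation*}
where the last equality uses equation~\eqref{eq:GDelta_1}.

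Finally I solve the $2\times 2$ linear system in $G_{\Delta^1},G_{\Delta^2}$ consisting of the displayed equation and $G_{\Delta^1}(z)+G_{\Delta^2}(z)=G_\Delta(z)$ (which is the second set partition above). Subtracting gives~\eqref{eq:GDelta^2}, and then substituting back yields~\eqref{eq:GDelta^1}. Equation~\eqref{eq:GDelta_2^2} follows from the third set partition as $G_{\Delta_2^2}=G_{\Delta_2}-G_{\Delta^1}$, simplified using~\eqref{eq:GDelta^1}. There is no genuine obstacle: the only thing to keep careful track of is the role of the empty partition (it sits in $\Delta^1$ but not in $\Delta_1\cup\Delta_2\cup\Delta^2$), which is exactly what produces the $\pm 1/z$ and $\pm 1$ correction terms.
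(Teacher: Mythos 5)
Your proposal is correct and takes essentially the same route as the paper: equation~\eqref{eq:GDelta_1} comes from the trichotomy of Lemma~\ref{lem:removable-addable}, the double count of covering edges of $\TYP$ between ranks $n$ and $n+1$ gives your identity $|\Delta^1(n)|+2|\Delta^2(n)|=|\Delta_1(n+1)|+2|\Delta_2(n+1)|$ (the paper states the equivalent form $|\Delta(n)|+|\Delta^2(n)|=|\Delta(n+1)|+|\Delta_2(n+1)|$), and the remaining two formulas follow from $G_{\Delta^1}=G_\Delta-G_{\Delta^2}$ and $G_{\Delta_2^2}=G_{\Delta_2}-G_{\Delta^1}$, exactly as in the paper. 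Even your handling of the empty partition in the last step (you note $\epsilon\in\Delta^1$, yet still use $\Delta_2=\Delta^1\sqcup\Delta_2^2$, which affects only the constant term) mirrors the paper's own convention, so there is no discrepancy between your argument and the published proof.
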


\begin{proof}
Equation~\eqref{eq:GDelta_1} is immediate since all nonempty triangular partitions have one or two removable cells, by Lemma~\ref{lem:removable-addable}.

To prove equation~\eqref{eq:GDelta^2}, we will count in two ways the number of edges between levels $n$ and $n+1$ in the Hasse diagram of $\TYP$. Since partitions in $\Delta^1(n)$ are covered by one element, and partitions in $\Delta^2(n)$ are covered by two, the number of such edges is $|\Delta^1(n)| + 2|\Delta^2(n)| = |\Delta(n)| + |\Delta^2(n)|$. On the other hand, counting how many elements are covered by each partition of $n+1$, we see that the number of such edges is $|\Delta(n + 1)| + |\Delta_2(n + 1)|$. It follows that 
\begin{equation}\label{eq:edge_counting}
|\Delta(n)| + |\Delta^2(n)| = |\Delta(n + 1)| + |\Delta_2(n + 1)|
\end{equation}
for $n\geq1$. In terms of generating functions, we obtain
\begin{align*}
G_{\Delta^2}(z) &= \sum_{n\geq1}|\Delta^2(n)|z^n = \sum_{n\geq1}|\Delta(n + 1)|z^n + \sum_{n\geq1}|\Delta_2(n + 1)|z^n - \sum_{n\geq1}|\Delta(n)|z^n =\\
&= \frac{1}{z}(G_\Delta(z) - 1-z) + \frac{1}{z}G_{\Delta_2}(z) - (G_\Delta(z)-1) = \frac{1 - z}{z}G_\Delta(z) + \frac{1}{z}G_{\Delta_2}(z) - \frac{1}{z}.
\end{align*}

Equation~\eqref{eq:GDelta^1} follows from equation~\eqref{eq:GDelta^2} and the fact that all triangular partitions have one or two addable cells, so $G_{\Delta^1}(z) = G_{\Delta}(z) - G_{\Delta^2}(z)$.
Finally, to deduce equation~\eqref{eq:GDelta_2^2}, we use the fact that partitions with two removable cells have one or two addable cells, and that $\Delta^1(n)\subseteq\Delta_2(n)$ by Lemma~\ref{lem:removable-addable}, so $G_{\Delta_2^2}(z) = G_{\Delta_2}(z) - G_{\Delta^1}(z)$.
\end{proof}

From equation~\eqref{eq:edge_counting}, we can derive the equality
$$
2|\Delta(n)|-|\Delta^1(n)| = 2|\Delta(n + 1)| - |\Delta_1(n + 1)|
$$
for $n\ge1$, from where we obtain the upper bound
$$|\Delta(n + 1)| - |\Delta(n)| = \frac{1}{2}\left(|\Delta_1(n + 1)| - |\Delta^1(n)|\right)\le \frac{1}{2}|\Delta_1(n + 1)|.$$

The expression for $G_{\Delta_2}$ given in Proposition~\ref{prop:GDelta_2} can be used to write an algorithm to find $|\Delta_2(n)|$. We have computed the first $100$ terms of this sequence using a MATLAB implementation of this algorithm, which is available at~\cite{website}. 
The first 50 terms of the sequences $|\Delta_1(n)|$ and $|\Delta_2(n)|$ appear in Table~\ref{tab:Delta_12}, and the first 100 terms are plotted in Figure~\ref{fig:orcp_trcp}. It appears to be the case that 
$|\Delta_2(n)|>|\Delta_1(n)|$ for all $n\ge9$, although we do not have a proof of this fact. It is interesting to note that both the local maxima of $|\Delta_1(n)|$ and the local minima of $|\Delta_2(n)|$ seem to occur precisely when $n\equiv2 \pmod 3$. On the other hand, $|\Delta(n)|$ does not exhibit such periodic extrema.

\begin{table}[ht]
\centering
\begin{tabular}{ c|c|c|c|c|c|c|c|c|c|c|c|c|c|c|c|c|c|c} 
 $n$ & 1 & 2 & 3 & 4 & 5 & 6 & 7 & 8 & 9 & 10 & 11 & 12 & 13 & 14 & 15 & 16 & 17 & 18 \\
 \hline
 \hline
 $|\Delta_1(n)|$ & 1 & 2 & 2 & 2 & 4 & 2 & 2 & 6 & 4 & 2 & 8 & 2 & 4 & 10 & 6 & 2 & 10 & 6 \\ 
 \hline
 $|\Delta_2(n)|$ & 0 & 0 & 1 & 2 & 2 & 5 & 6 & 4 & 8 & 11 & 8 & 14 & 14 & 10 & 17 & 22 & 16 & 20 \\ 
 \hline
 $|\Delta(n)|$ & 1 & 2 & 3 & 4 & 6 & 7 & 8 & 10 & 12 & 13 & 16 & 16 & 18 & 20 & 23 & 24 & 26 & 26 \\ 
 \hline
\end{tabular}\medskip 

\begin{tabular}{ c|c|c|c|c|c|c|c|c|c|c|c|c|c|c|c|c} 
 $n$ & 19 & 20 & 21 & 22 & 23 & 24 & 25 & 26 & 27 & 28 & 29 & 30 & 31 & 32 & 33 & 34 \\
 \hline
 \hline
 $|\Delta_1(n)|$ & 8 & 12 & 6 & 2 & 18 & 6 & 8 & 16 & 8 & 8 & 24 & 2 & 6 & 20 & 14 & 12 \\ 
 \hline
 $|\Delta_2(n)|$ & 22 & 20 & 29 & 32 & 20 & 32 & 34 & 28 & 38 & 39 & 30 & 50 & 48 & 32 & 42 & 48 \\ 
 \hline
 $|\Delta(n)|$ & 30 & 32 & 35 & 34 & 38 & 38 & 42 & 44 & 46 & 47 & 54 & 52 & 54 & 52 & 56 & 60 \\ 
 \hline
\end{tabular}\medskip 

\begin{tabular}{ c|c|c|c|c|c|c|c|c|c|c|c|c|c|c|c|c} 
 $n$ & 35 & 36 & 37 & 38 & 39 & 40 & 41 & 42 & 43 & 44 & 45 & 46 & 47 & 48 & 49 & 50 \\
 \hline
 \hline
 $|\Delta_1(n)|$ & 26 & 6 & 6 & 24 & 22 & 6 & 30 & 6 & 20 & 30 & 10 & 8 & 36 & 14 & 18 & 32 \\ 
 \hline
 $|\Delta_2(n)|$ & 40 & 61 & 62 & 42 & 50 & 66 & 50 & 68 & 62 & 54 & 77 & 78 & 54 & 74 & 78 & 64 \\ 
 \hline
 $|\Delta(n)|$ & 66 & 67 & 68 & 66 & 72 & 72 & 80 & 76 & 82 & 84 & 87 & 86 & 90 & 88 & 96 & 96 \\ 
 \hline
\end{tabular}
\caption{The sequences $|\Delta_1(n)|$, $|\Delta_2(n)|$ and $|\Delta(n)|$ for $1\le n\le50$.}
\label{tab:Delta_12}
\end{table}

\begin{figure}[ht]
    \centering
    \includegraphics[width=11cm]{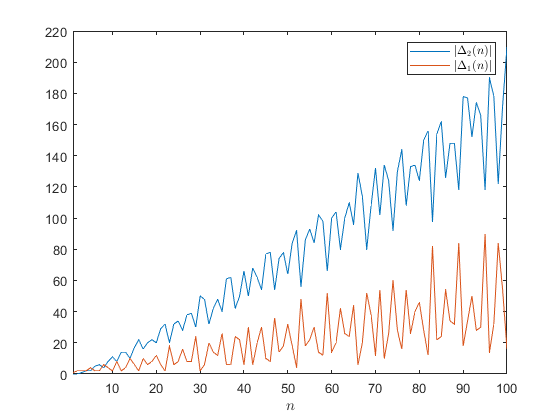}
    \caption{Plot of $|\Delta_2(n)|$ and $|\Delta_1(n)|$ for $1\le n\le100$.}
    \label{fig:orcp_trcp}
\end{figure}

\section{Triangular subpartitions and a combinatorial proof of Lipatov's enumeration formula for balanced words}
\label{sec:subpartitions}

For $\tau\in\Delta$, let $\I(\tau)=|\{\nu\in\Delta:\nu\subseteq\tau\}|$ denote the number of triangular subpartitions of $\tau$.
In this section, after giving a general recurrence for these numbers, we will present explicit formulas for $\I(\tau)$ in some particular cases. We will also derive a new, combinatorial proof of Theorem~\ref{lipatov}.

Recall from Definition~\ref{def:diagonal} that $\tau^\circ$ denotes the interior of $\tau$. Let $c^-$ and $c^+$ be the removable cells of $\tau$, which are the leftmost and rightmost cells of $\partial_\tau$. If $\tau$ has only one removable cell, then $c^-=c^+$.

\begin{lemma}
\label{lem:recurrence_triangular_subpartitions}
For any $\tau\in\Delta(n)$ with $n\geq1$, 
$$
\I(\tau) = \I(\tau\setminus\{c^-\}) + \I(\tau\setminus\{c^+\}) - \I(\tau^\circ) + 1.
$$
\end{lemma}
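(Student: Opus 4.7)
The plan is to recognize the identity as an inclusion-exclusion count. Define $A=\{\nu\in\TYP:\nu\subseteq\tau\setminus\{c^-\}\}$ and $B=\{\nu\in\TYP:\nu\subseteq\tau\setminus\{c^+\}\}$; since $c^-,c^+$ are removable cells of $\tau$, both $\tau\setminus\{c^-\}$ and $\tau\setminus\{c^+\}$ are triangular partitions, and $|A|=\I(\tau\setminus\{c^-\})$, $|B|=\I(\tau\setminus\{c^+\})$. The goal is to establish $\{\nu\in\TYP:\nu\subseteq\tau\}=\{\tau\}\sqcup(A\cup B)$, after which inclusion-exclusion gives $\I(\tau)=1+|A|+|B|-|A\cap B|$, and it only remains to identify $|A\cap B|$ with $\I(\tau^\circ)$.

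The first key step is showing that every proper triangular subpartition of $\tau$ lies in $A\cup B$. Rather than attempt a direct geometric argument, I would invoke the rankedness of $\TYP$ from Section~\ref{subsection:bergeron}, together with its description of covering relations. Given any $\nu\in\TYP$ with $\nu\subsetneq\tau$, choose a saturated chain $\tau=\mu_0\gtrdot\mu_1\gtrdot\cdots\gtrdot\mu_k=\nu$. The first cover forces $\mu_1=\tau\setminus\{c\}$ for some removable cell $c$ of $\tau$, and by Lemma~\ref{lem:removable-addable} this $c$ must be $c^-$ or $c^+$. Hence $\nu\subseteq\mu_1$ lies in $A$ or in $B$.

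The second key step is showing $|A\cap B|=\I(\tau^\circ)$. Since $A\cap B=\{\nu\in\TYP:\nu\subseteq\tau\setminus\{c^-,c^+\}\}$, I claim this equals $\{\nu\in\TYP:\nu\subseteq\tau^\circ\}$. One inclusion is immediate from $\tau^\circ\subseteq\tau\setminus\{c^-,c^+\}$, since $\{c^-,c^+\}\subseteq\partial_\tau$. For the reverse, let $\nu\in\TYP$ with $\nu\subseteq\tau\setminus\{c^-,c^+\}$, and pick a cutting line $L_\nu$ of $\nu$. Both $c^-$ and $c^+$ lie in $\bbN^2\setminus\nu$, hence strictly above $L_\nu$; by affinity of $L_\nu$, every convex combination $\lambda c^-+(1-\lambda)c^+$ with $\lambda\in(0,1)$ also lies strictly above $L_\nu$. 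Since every cell of $\partial_\tau\setminus\{c^-,c^+\}$ is such a convex combination, none can belong to $\nu$, giving $\nu\subseteq\tau\setminus\partial_\tau=\tau^\circ$.

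The main obstacle I anticipate is the first step: a direct attempt to rule out a triangular $\nu\subsetneq\tau$ containing both removable cells would require comparing a cutting line of $\nu$ with the cutting line of $\tau$ through $c^-$ and $c^+$ outside the segment between them, which is delicate. The saturated-chain argument bypasses this entirely by reducing the geometric question to the already-established rankedness of $\TYP$. The degenerate case $c^-=c^+$ is handled uniformly: there $\partial_\tau=\{c^-\}$ and $\tau^\circ=\tau\setminus\{c^-\}=\tau\setminus\{c^+\}$, so the recurrence collapses to $\I(\tau)=\I(\tau\setminus\{c^-\})+1$, as expected.
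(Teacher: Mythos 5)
Your proposal is correct, and it follows the same inclusion-exclusion skeleton as the paper: count $\tau$ itself, the triangular subpartitions of the two covered elements $\tau\setminus\{c^-\}$ and $\tau\setminus\{c^+\}$, and subtract those counted twice. The difference lies in how the doubly-counted set is identified with the subpartitions of $\tau^\circ$. The paper observes that a triangular $\nu$ lies below both covered elements if and only if it lies below their meet in the lattice $\TYP$, and then computes $(\tau\setminus\{c^-\})\land(\tau\setminus\{c^+\})=\bbN^2\setminus\big((\bbN^2\setminus\tau)\cup\partial_\tau\big)=\tau^\circ$ using the explicit meet formula of Proposition~\ref{triang_join_meet_construction}. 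You instead prove $|A\cap B|=\I(\tau^\circ)$ directly: for triangular $\nu\subseteq\tau\setminus\{c^-,c^+\}$, a cutting line of $\nu$ has $c^-$ and $c^+$ strictly above it, and convexity of that open half-plane places all lattice points of the segment, i.e.\ all of $\partial_\tau$, strictly above the line, so $\nu\subseteq\tau^\circ$. This replaces the appeal to the lattice structure and the meet construction by an elementary convexity argument; the only poset input you need is the covering-relation description (rankedness) of $\TYP$, which you also use, via a maximal chain in $[\nu,\tau]$, to make explicit the step---left implicit in the paper---that every proper triangular subpartition of $\tau$ lies below one of the two elements covered by $\tau$. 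The trade-off: the paper's version is shorter given that Proposition~\ref{triang_join_meet_construction} has already been established, while yours is self-contained and would survive even without the lattice machinery. Your handling of the degenerate case $c^-=c^+$ agrees with the paper's, since then $\tau^\circ=\tau\setminus\{c^-\}$ by definition and your decomposition gives $\I(\tau)=\I(\tau\setminus\{c^-\})+1$ directly.
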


\begin{proof}
    If $c^-\neq c^+$, then $\tau$ covers two elements in $\TYP$, namely $\tau\setminus\{c^-\}$ and $\tau\setminus\{c^+\}$. By Proposition~\ref{triang_join_meet_construction}, the meet of these two elements is 
    $ \bbN^2\setminus\big((\bbN^2\setminus\tau)\cup\partial_\tau\big) = \tau\setminus\partial_\tau = \tau^\circ$. The formula now follows by inclusion-exclusion.

    If $\tau$ has only one removable cell $c$, then $\I(\tau) = \I(\tau\setminus\{c\})+1$. But in this case $\tau^\circ=\tau\setminus\{c\}$ by definition.
\end{proof}

The above recurrence relation, along with the base case $\I(\epsilon)=1$, allows us to compute $\I(\tau)$ for any $\tau\in\Delta$, although not very efficiently. 
In order to find explicit formulas for $\I(\tau)$ in some cases, let us consider the related problem of counting triangular partitions whose width is at most $\ell$ and whose height is at most $h$; equivalently, those whose Young diagram fits inside an $h\times\ell$ rectangle. We denote by $\Delta^{h\times\ell}$ the set of such partitions.
The next lemma, which is illustrated in Figure~\ref{fig:rectangle_equivalence}, shows how these two problems are related.

\begin{figure}[ht]
\centering
	\begin{tikzpicture}[scale=.6]
		\filldraw[color=black, fill=yellow, semithick] (0,0) -- (0,5) -- (2,5) -- (2,4) -- (4,4) -- (4,3) -- (5,3) -- (5,2) -- (7,2) -- (7,1) -- (8,1) -- (8,0) -- (0,0);
        \axes{10}{7}
        \draw[red, thick] (0,0) -- (0,5) -- (8,5) -- (8,0) -- (0,0);
        \draw[green, thick] (0,13/2) -- (10,1/4);
	\end{tikzpicture}
\caption{A triangular partition fits inside a $5\times8$ rectangle if and only if it is a subpartition of $\tau = 87542$.}
\label{fig:rectangle_equivalence}
\end{figure}
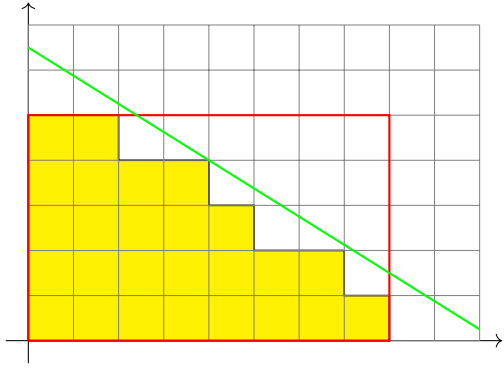

\begin{lemma}
\label{equivalence_subpartitions-rectangle}
Let $h,\ell\ge1$, and let $\nu\in\Delta$. Then $\nu\in\Delta^{h\times\ell}$ if and only if $\nu\subseteq\tau$, where $\tau=\tau_1\dots\tau_h$ is the triangular partition given by
$$
\tau_j = \left\lfloor\ell + 1 - \frac{\ell(j - 1) + 1}{h}\right\rfloor
$$
for $1\le j\le h$.
\end{lemma}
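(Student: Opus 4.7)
The claim has two parts to verify: first, that $\tau$ is itself a triangular partition lying in the $h\times\ell$ rectangle, and second, that $\tau$ is in fact the \emph{largest} element of $\Delta^{h\times\ell}$ under containment. The forward direction $(\Leftarrow)$ of the equivalence is then immediate (if $\nu\in\Delta$ is contained in some partition fitting inside the $h\times\ell$ rectangle, then $\nu$ itself fits), while the reverse direction $(\Rightarrow)$ is where the real work happens.

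For the first task, I plan to match the given formula to Definition~\ref{def:triangular} by verifying the identity $\ell + 1 - (\ell(j-1) + 1)/h = r - jr/s$ for all $j$, with $r = \ell + 1 + (\ell-1)/h$ and $s = h + 1 + (h-1)/\ell$. The relation $r/s = \ell/h$ makes the slope match, and a direct computation gives $s - s/r = h + 1 - 1/\ell$, so $\lfloor s - s/r\rfloor = h$, confirming that $\tau$ is triangular with exactly $h$ parts cut off by $\LL_{r,s}$. Checking $\tau_1 = \ell$ and $\tau_h = 1 + \lfloor(\ell-1)/h\rfloor \geq 1$ then confirms $\tau \in \Delta^{h\times\ell}$.

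For the reverse direction, let $\nu = \nu_1\dots\nu_k \in \Delta^{h \times \ell}$ with cutting line $\LL_{r',s'}$. Because $(\ell + 1, 1)$ and $(1, h + 1)$ are not in $\nu$ (as $\nu$ fits in the rectangle), they must lie strictly above $\LL_{r',s'}$, yielding $(\ell+1)/r' + 1/s' > 1$ and $1/r' + (h+1)/s' > 1$. Combined with $\nu_j/r' + j/s' \leq 1$ for $j \leq k$, subtraction gives two strict inequalities
\[
\frac{\ell + 1 - \nu_j}{r'} > \frac{j-1}{s'}, \qquad \frac{h + 1 - j}{s'} > \frac{\nu_j - 1}{r'}.
\]
I plan to multiply these (valid since each left-hand side is strictly positive under $j \leq h$, $\nu_j \leq \ell$, while the right-hand sides are non-negative), obtaining $(\ell + 1 - \nu_j)(h + 1 - j) > (j-1)(\nu_j - 1)$. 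Expanding and collecting terms simplifies this to $h\nu_j + \ell j < h\ell + h + \ell$. Since this is a strict inequality between integers, it strengthens to $h\nu_j + \ell j \leq h\ell + h + \ell - 1$, and rearranging gives $\nu_j \leq \ell + 1 - (\ell(j-1)+1)/h$, so $\nu_j \leq \tau_j$. For $j > k$ the bound is trivial, so $\nu \subseteq \tau$.

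The step I expect to be the main pinch point is spotting the multiplication trick: neither of the two strict inequalities alone is enough to yield the combined bound $h\nu_j + \ell j \leq h\ell + h + \ell - 1$, but their product is exactly what is needed. A minor subtlety will be justifying the multiplication of strict inequalities in the boundary cases $\nu_j = 1$ or $j = 1$ (where one right-hand side vanishes); this goes through because the corresponding left-hand side is strictly positive.
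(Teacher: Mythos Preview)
Your argument is correct. Both parts check out: the explicit choice $r=\ell+1+(\ell-1)/h$, $s=h+1+(h-1)/\ell$ does satisfy $r/s=\ell/h$ and reproduces the formula for $\tau_j$ with $\lfloor s-s/r\rfloor=h$; and the multiplication of the two strict inequalities is valid because each left-hand side is strictly positive while each right-hand side is nonnegative, so the product inequality $(\ell+1-\nu_j)(h+1-j)>(j-1)(\nu_j-1)$ holds even in the boundary cases $j=1$ or $\nu_j=1$. The expansion to $h\nu_j+\ell j<h\ell+h+\ell$ and the strengthening to $\le h\ell+h+\ell-1$ by integrality are both fine.

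The paper takes a slightly different, more geometric route. Rather than verifying Definition~\ref{def:triangular} via explicit $r,s$, it observes directly that $\tau$ is the set of lattice points weakly below the line $hx+\ell y=h\ell+h+\ell-1$. For the reverse direction, instead of your multiplication trick, it argues by contradiction: a cell $(a,b)\in\nu\setminus\tau$ would lie weakly above the line $hx+\ell y=h\ell+h+\ell$, which passes through $(1,h+1)$ and $(\ell+1,1)$; any cutting line for $\nu$ lying weakly above $(a,b)$ would then have to lie weakly above one of those two points, contradicting $\nu\in\Delta^{h\times\ell}$. The two arguments are really dual: yours derives the inequality $h\nu_j+\ell j\le h\ell+h+\ell-1$ directly from the cutting line of $\nu$, while the paper's shows that violating it forces the cutting line to miss one of the two corner cells. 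The paper's version is a bit shorter and avoids the product-of-inequalities step, but your algebraic approach is self-contained and makes the role of the rectangle constraints completely explicit.
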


\begin{proof}
The partition $\tau$ consists of the cells that lie weakly below the line $hx+\ell y=h\ell+h+\ell-1$,
which implies that $\tau$ is triangular. Additionally, since $\tau_1=\ell$ and $\tau$ has $h$ parts, we have $\tau\in\Delta^{h\times\ell}$; hence the same is true for any subpartition of $\tau$.

It remains to show that any $\nu\in\Delta^{h\times\ell}$ must satisfy $\nu\subseteq\tau$. Consider a cell $c =(a, b) \in\nu$, and suppose for the sake of contradiction that $c\notin\tau$. Then 
$ha+\ell b>h\ell+h+\ell-1$, and so $ha+\ell b\ge h\ell+h+\ell$, that is, $(a, b)$ lies weakly above the line $hx+\ell y=h\ell+h+\ell$. This line passes through $(1, h +1)$ and $(\ell + 1, 1)$, so any cutting line for $\nu$ must lie weakly above one of these two points, contradicting the assumption that $\nu\in\Delta^{h\times\ell}$.
\end{proof}

The above lemma states that we can always write
$\Delta^{h\times\ell}$ as the set of triangular partitions contained in some suitable $\tau$. However, the converse is not true. For example, if $\tau=31$, the set of triangular subpartitions of $\tau$ is not of the form $\Delta^{h\times\ell}$ for any $h,\ell$, since a rectangle containing $\tau$ must also contain the partition $32$.

Our next goal is to give a formula for $\I(\sigma^\ell)$, which, by Lemma~\ref{equivalence_subpartitions-rectangle}, equals the number of triangular partitions that fit inside an $\ell\times \ell$ square. The proof uses the bijection $\omega$ from equation~\eqref{def:w}. 

\begin{lemma}
\label{lem:h<=l,w=l}
The following hold for $\ell\ge1$:
\begin{enumerate}[(a)]
    \item the number of triangular partitions of width exactly $\ell$ and height at most $\ell$ is $|\cB_\ell|/2$,
    \item $\left|\Delta^{\ell\times\ell}\setminus\Delta^{(\ell-1)\times(\ell-1)}\right|=\I(\sigma^\ell)-\I(\sigma^{\ell-1})=|\cB_\ell|-1$.
\end{enumerate}
\end{lemma}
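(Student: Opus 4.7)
For part $(a)$, the plan is to invoke the bijection $\omega$ from Proposition~\ref{sturmian_encoding_1}. First I would observe that any triangular partition $\tau=\tau_1\dots\tau_k$ with $\tau_1=\ell$ and $k\le\ell$ automatically satisfies $\tau_1\ge k$, so by Lemma~\ref{lem:wide} it lies in $\Wide$. Hence Proposition~\ref{sturmian_encoding_1}, summed over the possible values of $k$, yields a bijection between triangular partitions of width exactly $\ell$ and height at most $\ell$ and balanced words of length $\ell$ starting with $1$. To conclude it suffices to show that exactly half of $\cB_\ell$ start with $1$: the involution that swaps $0\leftrightarrow1$ preserves the balanced property (see equation~\eqref{eq:balanced}), has no fixed points, and changes the first letter, so it pairs up words in $\cB_\ell$ starting with $1$ with those starting with $0$.

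For part $(b)$, the plan is first to identify $\Delta^{\ell\times\ell}$ with the set of triangular subpartitions of $\sigma^\ell$. Applying Lemma~\ref{equivalence_subpartitions-rectangle} with $h=\ell$ and the width parameter equal to $\ell$, a direct computation gives
\[\tau_j=\left\lfloor \ell+1-\frac{\ell(j-1)+1}{\ell}\right\rfloor=\ell-j+1,\]
so $\tau=\sigma^\ell$ and $\Delta^{\ell\times\ell}=\{\nu\in\Delta:\nu\subseteq\sigma^\ell\}$, which has cardinality $\I(\sigma^\ell)$; likewise for $\ell-1$. Since $\sigma^{\ell-1}\subseteq\sigma^\ell$, the difference $\Delta^{\ell\times\ell}\setminus\Delta^{(\ell-1)\times(\ell-1)}$ has cardinality $\I(\sigma^\ell)-\I(\sigma^{\ell-1})$, which establishes the first equality.

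For the second equality, I would describe $\Delta^{\ell\times\ell}\setminus\Delta^{(\ell-1)\times(\ell-1)}$ as the union $A\cup B$, where $A$ is the set of triangular partitions of width exactly $\ell$ and height at most $\ell$, and $B$ is the set of triangular partitions of height exactly $\ell$ and width at most $\ell$. By part $(a)$, $|A|=|\cB_\ell|/2$, and by conjugation symmetry $|B|=|\cB_\ell|/2$ as well. The intersection $A\cap B$ consists of triangular partitions with $\tau_1=k=\ell$, which by Lemma~\ref{lem:wide} forces $\tau=\sigma^\ell$, so $|A\cap B|=1$. Inclusion-exclusion then yields $|A\cup B|=|\cB_\ell|-1$. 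There is no single hard step; the one point worth being careful about is verifying that the intersection $A\cap B$ reduces exactly to the staircase, which is a clean consequence of Lemma~\ref{lem:wide}.
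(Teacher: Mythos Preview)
Your proposal is correct and follows essentially the same approach as the paper's own proof: invoking Lemma~\ref{lem:wide} and Proposition~\ref{sturmian_encoding_1} together with the $0\leftrightarrow1$ swap for part~(a), and inclusion--exclusion over the ``width $=\ell$'' and ``height $=\ell$'' pieces (with $A\cap B=\{\sigma^\ell\}$ via Lemma~\ref{lem:wide}) for part~(b). One tiny point to make explicit in part~(a): you show that width $\ell$ and height $\le\ell$ implies wide, but for the bijection to hit exactly your target set you also need the converse (wide with width $\ell$ implies height $\le\ell$), which is the other direction of Lemma~\ref{lem:wide}; this is immediate from the same reference, so it is only a matter of stating it.
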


\begin{proof}
By Lemma~\ref{lem:wide}, a triangular partition of width $\ell$ has height at most $\ell$ if and only if it is wide. By allowing $k$ to vary within $[\ell]$ in Proposition \ref{sturmian_encoding_1}, we get a bijection between such partitions and balanced words of length $\ell$ that start with~$1$. By the definition in equation~\eqref{eq:balanced}, it is clear that the operation on binary words that replaces the ones with zeros and the zeros with ones preserves the property of being balanced. 
Therefore, the number of balanced words of length $\ell$ that start with with~$1$ is half of the total number of balanced words of length $\ell$. This proves $(a)$.

To prove $(b)$, note that partitions that fit inside an $\ell\times \ell$ square but not inside an $(\ell-1)\times (\ell-1)$ square must have height or width exactly equal to $\ell$. By part $(a)$, $|\cB_\ell|/2$ is the number of triangular partitions of width $\ell$ and height at most $\ell$, and by conjugation, also the number of triangular partitions of height $\ell$ and width at most $\ell$. By Lemma~\ref{lem:wide}, the only partition that has width and height equal to $\ell$ is the staircase $\sigma^\ell$. Thus, $$\left|\Delta^{\ell\times\ell}\setminus\Delta^{(\ell-1)\times(\ell-1)}\right|=2\,|\cB_\ell|/2-1=|\cB_\ell|-1.$$ 
On the other hand, we have $\left|\Delta^{\ell\times\ell}\right|=\I(\sigma^\ell)$ by Lemma \ref{equivalence_subpartitions-rectangle}, and so
$\left|\Delta^{\ell\times\ell}\setminus\Delta^{(\ell-1)\times(\ell-1)}\right|=\I(\sigma^\ell)-\I(\sigma^{\ell-1})$.
\end{proof}

Lemma~\ref{lem:h<=l,w=l}$(a)$, combined with Theorem~\ref{lipatov}, implies that the number of triangular partitions of width exactly $\ell$ and height at most $\ell$ is
\begin{equation}\label{eq:h<=l,w=l}
\frac{|\cB_\ell|}{2}=\frac{1}{2} + \frac{1}{2}\sum_{i = 1}^\ell(\ell - i + 1)\varphi(i).
\end{equation}

\begin{theorem}
\label{thm:staircase}
For $\ell\ge0$, 
$$
\left|\Delta^{\ell\times\ell}\right|=\I(\sigma^\ell) = 1+\sum_{i = 1}^\ell\binom{\ell - i + 2}{2}\varphi(i).
$$
\end{theorem}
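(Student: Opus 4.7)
The plan is to combine the recursive identity from Lemma~\ref{lem:h<=l,w=l}$(b)$ with Lipatov's formula (Theorem~\ref{lipatov}) and then telescope. Specifically, Lemma~\ref{lem:h<=l,w=l}$(b)$ says
$$\I(\sigma^\ell) - \I(\sigma^{\ell-1}) = |\cB_\ell| - 1,$$
so summing this difference from $k=1$ to $\ell$, together with the base case $\I(\sigma^0) = \I(\epsilon) = 1$, will yield
$$\I(\sigma^\ell) = 1 + \sum_{k=1}^\ell \bigl(|\cB_k|-1\bigr).$$

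Next I would plug in Lipatov's formula $|\cB_k|-1 = \sum_{i=1}^k (k-i+1)\varphi(i)$ and swap the order of summation. Since for each fixed $i\in[\ell]$, the index $k$ ranges over $\{i,i+1,\dots,\ell\}$, the inner sum becomes $\sum_{k=i}^\ell (k-i+1) = \sum_{j=1}^{\ell-i+1} j = \binom{\ell-i+2}{2}$, giving exactly the claimed expression
$$\I(\sigma^\ell) = 1 + \sum_{i=1}^\ell \binom{\ell-i+2}{2}\varphi(i).$$

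The proof is essentially bookkeeping once one has Lemma~\ref{lem:h<=l,w=l}$(b)$ in hand, and there is no real obstacle beyond verifying the base case and the index swap. I note that, conversely, if one can give a bijective or otherwise direct combinatorial proof of the closed form for $\I(\sigma^\ell)$ — for instance, by decomposing a triangular partition in $\sigma^\ell$ according to the primitive slope vector $(a,b)$ of its cutting line and the position of the diagonal, so that the factor $\varphi(i)$ counts the number of admissible coprime pairs with $a+b=i$ and the factor $\binom{\ell-i+2}{2}$ counts the translational freedom inside the $\ell\times\ell$ square — then comparing with Lemma~\ref{lem:h<=l,w=l}$(b)$ would recover Lipatov's theorem as a byproduct, as promised in the introduction.
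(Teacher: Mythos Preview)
Your proposal is correct and follows essentially the same route as the paper's first proof: telescope Lemma~\ref{lem:h<=l,w=l}$(b)$, apply Lipatov's formula, and swap the order of summation to collapse $\sum_{k=i}^\ell(k-i+1)$ into $\binom{\ell-i+2}{2}$. Your closing remark also anticipates correctly that the paper later gives a direct combinatorial proof (via an encoding of $\tau\in\Delta\setminus\cI$ by a quadruple $(a,b,d,e)$ with $\gcd(d,e)=1$ and a lattice-point count in certain triangles) which, combined with Lemma~\ref{lem:h<=l,w=l}$(b)$, yields a new proof of Lipatov's theorem.
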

 
\begin{proof}
By Lemma~\ref{lem:h<=l,w=l}$(b)$, we have 
$\I(\sigma^j)-\I(\sigma^{j-1})=|\cB_j|-1$ for all $j\ge1$. Summing over $j\in[\ell]$, including the empty partition, and using Theorem~\ref{lipatov}, we obtain
\begin{align*}
\I(\sigma^\ell) &= 1+\sum_{j = 1}^\ell(|\cB_j|-1)= 1+\sum_{j = 1}^\ell\sum_{i = 1}^j(j - i + 1)\varphi(i) = 1+\sum_{i = 1}^\ell\varphi(i)\sum_{j = i}^\ell(j - i + 1) =\\
&= 1+\sum_{i = 1}^\ell\varphi(i)\sum_{k = 1}^{\ell - i + 1}k = 1+\sum_{i = 1}^\ell\binom{\ell - i + 2}{2}\varphi(i).\qedhere
\end{align*}
\end{proof}

The first few terms of the sequence $\left|\Delta^{\ell\times\ell}\right|=\I(\sigma^\ell)$ for $\ell\ge0$ are $1, 2, 5, 12, 25, 48, 83,\dots$, which did not appear in~\cite{oeis} at the time of writing this paper.

The above proof of Theorem~\ref{thm:staircase} relies on Lipatov's enumeration formula for balanced words (Theorem~\ref{lipatov}), and it does not give a conceptual understanding of why the terms $\binom{\ell - i + 2}{2}$ and $\varphi(i)$ appear. Theorem~\ref{lipatov}, first proved by Lipatov in~\cite{Lipatov1982}, has been rediscovered several times over the years, along with different proofs \cite{Berenstein88,Mignosi1991,Berstel93,Berstel1996,Cassaigne2001}. These proofs are quite technical, and do not easily provide a conceptual explanation of our formula for $\I(\sigma^\ell)$.
In the rest of this section, we give a direct, combinatorial proof of Theorem~\ref{thm:staircase} that does not rely on Lipatov's formula.
As an added benefit, our argument contributes a new proof of Lipatov's formula.

Let $\cI$ denote the set of partitions with all parts equal to $1$, including the empty partition.
We will encode partitions in $\Delta\setminus\cI$ by four integers, using ideas similar to those in~\cite{Corteel1999}.
For a nonempty triangular partition $\tau$, let $c= (a,b)$ be its rightmost removable cell. 

If $a=1$, there cannot be another removable cell to the left of $c$, so this is the only one. By Proposition \ref{charact_orcp_oacp}, either $b = 1$ or the line containing the edge of $\Conv(\tau)$ adjacent to $c$ from below must intersect $\Conv(\bbN^2\setminus\tau)$ above $c$, which implies that $\tau\in\cI$. 

If $a > 1$, consider the unique pair of relatively prime positive integers $(d,e)$ such that $(a - d, b + e)\in\bbN^2 \setminus \tau$ and $(a - d', b + e')\notin\bbN^2 \setminus \tau$ for any $d',e'\in\bbN$ with $e'/d' < e/d$ (see Figure~\ref{fig:abde}). Note that such a pair must always exist. For any $\tau\in\Delta\setminus\cI$, define $\phi(\tau) = (a,b,d,e)$.

\begin{figure}[ht]
\centering
	\begin{tikzpicture}[scale=0.6]
		\filldraw[color=black, fill=yellow, semithick] (0,0) -- (0,6) -- (1,6) -- (1,5) -- (2,5) -- (2,4) -- (4,4) -- (4,3) -- (6,3) -- (6,2) -- (7,2) -- (7,1) -- (9,1) -- (9,0);
        \axes{11}{8}
        \draw[black] (3,5) -- (6,5) -- (6,3);
        \node at (4.5,5.5) {$d$};
        \node at (6.3,4) {$e$};
        \draw[orange, thick] (0,7) -- (10.5,0);
        \node at (10.5, -0.4) [orange]{$L$};
        \draw[green, thick] (0,6.6) -- (11,0);
        \node at (11.25, -0.4) [green]{$L_\varepsilon$};        
        \draw[black] (3,5) circle (3pt) node[below left]{$(a-d,b+e)$};
        \filldraw[black] (6,3) circle (3pt) node[right, outer sep=3pt]{$(a,b)$};
        \filldraw[black] (9,1) circle (2.5pt);
	\end{tikzpicture}
\caption{A triangular partition $\tau$ with $\phi(\tau) =(a,b,d,e)$, and the cutting line $L_\varepsilon$.}
\label{fig:abde}
\end{figure}
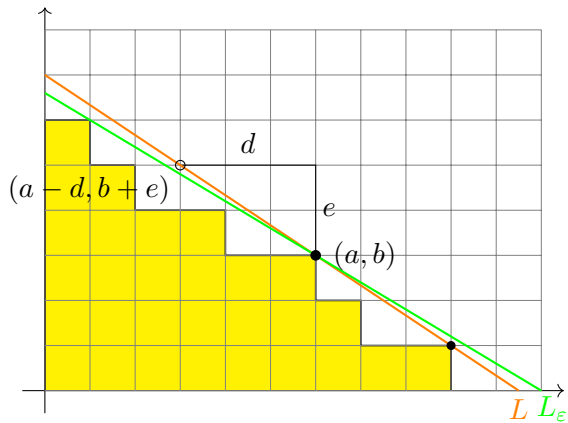

\begin{lemma}
\label{lem:phi_bijection}
The map $\phi$ is a bijection between $\Delta\setminus\cI$ and $$Q=\{(a,b,d,e)\in\bbN^4\mid d < a,\;\gcd(d,e) = 1\}.$$
\end{lemma}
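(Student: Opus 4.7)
My plan is to construct an explicit inverse $\psi\colon Q\to\Delta\setminus\cI$ and then verify both compositions $\phi\circ\psi=\mathrm{id}_Q$ and $\psi\circ\phi=\mathrm{id}_{\Delta\setminus\cI}$. For $(a,b,d,e)\in Q$, let $L$ denote the line through $(a,b)$ of slope $-e/d$; since $\gcd(d,e)=1$, its lattice points are precisely the cells $(a+kd,b-ke)$ for $k\in\bbZ$. I would set
\[
\psi(a,b,d,e) \;=\; \{(x,y)\in\bbN^2 : dy+ex \le db+ea\} \;\setminus\; \{(a-kd,b+ke) : k\ge 1\},
\]
that is, all lattice points weakly below $L$, except those on $L$ strictly to the upper-left of $(a,b)$. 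Geometrically, this is the partition cut off by the auxiliary line $L_\varepsilon$ of Figure~\ref{fig:abde}, which passes through $(a,b)$ with slope slightly less steep than $-e/d$.

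First I would check that $\psi(a,b,d,e)$ is a Ferrers diagram (monotonicity in each coordinate, using that $\gcd(d,e)=1$ keeps the boundary case on $L$ strictly collinear), that it is triangular by exhibiting an explicit $L_\varepsilon$, and that it avoids $\cI$: since $a\ge 2$, the inequality $d\cdot 1+e\cdot 2 \le db+ea$ holds, putting $(2,1)\in\psi(a,b,d,e)$.

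The substantial step is $\phi(\psi(a,b,d,e))=(a,b,d,e)$. Shifting $L_\varepsilon$ slightly downward exhibits $(a,b)$ as removable. To show it is the \emph{rightmost} removable cell, I would invoke Lemma~\ref{lemma_vertices}: any removable cell must be a vertex of $\Conv(\psi(a,b,d,e))$, and all partition corners strictly below $L$ lie in the interior of the convex-hull edge along $L$, so the only candidates with $x>a$ are the on-$L$ cells $(a+kd,b-ke)$ with $1\le k\le\lfloor(b-1)/e\rfloor$. Removing any such cell leaves the rest of the on-$L$ cells still collinear on $L$, forcing any candidate cutting line of the complement to be weakly less steep than $-e/d$; but then $(a-d,b+e)$, which sits on $L$ upper-left of $(a,b)$ and is outside $\psi(a,b,d,e)$, ends up weakly below such a line, a contradiction. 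Finally, the min-slope relatively prime pair must be $(d,e)$ because $(a-d,b+e)$ is on $L$ with $x<a$, hence excluded from $\psi(a,b,d,e)$, while every $(a-d',b+e')$ with $e'/d'<e/d$ is strictly below $L$, hence included.

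The reverse $\psi(\phi(\tau))=\tau$ follows from the same geometric picture applied to an arbitrary $\tau\in\Delta\setminus\cI$: by Lemma~\ref{line_touching_removable_cell} there is a cutting line of $\tau$ through only the rightmost removable cell $(a,b)$, and the minimality of $e/d$ together with a symmetric rightward argument pin down $\tau$ cell-by-cell as $\psi(a,b,d,e)$. The main obstacle is the forward ``no removable cell with $x>a$'' step: being a vertex of the convex hull and an ordinary partition corner is not enough for removability in the triangular sense, as illustrated by $\psi(3,4,1,2)=(4,4,3,3,2,1,1)$, in which the cell $(4,2)$ is both a convex-hull vertex and a partition corner but is not triangular-removable. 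The collinearity-on-$L$ argument sketched above is the right tool to rule it out.
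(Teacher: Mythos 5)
Your forward step $\phi(\psi(a,b,d,e))=(a,b,d,e)$ hinges on the claim that the only hull-vertex partition corners of $\psi(a,b,d,e)$ with $x>a$ are the on-$L$ cells $(a+kd,b-ke)$; that claim is false. Take $(a,b,d,e)=(3,4,2,5)$, so $L\colon 5x+2y=23$ and $\psi(3,4,2,5)=(4,3,3,3,2,2,1,1)$. Here the cell $(4,1)$ is a partition corner (the row above has length $3$), lies strictly below $L$, has $x>a$, and is a vertex of $\Conv(\psi)$ (the right part of the hull boundary is $(4,1),(3,4),(1,8)$), while there are no on-$L$ cells with $x>a$ at all; so your candidate list is empty even though a genuine candidate exists, and your collinearity argument never addresses it. In general the bottom-right cell $(\lambda_1,1)$ is always a hull vertex and need not lie on $L$, so Lemma~\ref{lemma_vertices} alone cannot confine the candidates to $L$. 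The repair is the paper's broader argument: every cell of $\psi(a,b,d,e)$ with $x>a$ lies weakly below $L$, so for \emph{any} removable cell $c'$ to the right of $(a,b)$, a cutting line for $\psi(a,b,d,e)\setminus\{c'\}$ must pass weakly above $(a,b)$ and strictly below $c'$, hence be strictly steeper than $L$, hence leave $(a-d,b+e)$ below it, a contradiction. (Note also that your slope bookkeeping is inverted: a line ``weakly less steep than $-e/d$'' through $(a,b)$ would leave $(a-d,b+e)$ weakly \emph{above} it; only the steeper case produces the contradiction you want.)

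The second composition, $\psi(\phi(\tau))=\tau$, is the delicate half of the lemma and you essentially assert it. The minimality of $e/d$, combined with Lemma~\ref{line_touching_removable_cell}, controls only the region weakly to the left of $(a,b)$: it shows cells weakly left of $(a,b)$ and weakly above $L$ are outside $\tau$, and cells weakly below $L$ to the right of $(a,b)$ are inside $\tau$. It does \emph{not} show that no cell of $\tau$ to the right of $(a,b)$ lies strictly above $L$: the cutting line through $(a,b)$ alone is strictly less steep than $L$, so for $x>a$ it runs above $L$ and excludes nothing there. There is no ``symmetric rightward'' minimality to invoke, since $(d,e)$ was defined only by looking up-left from $(a,b)$. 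Ruling out such cells is exactly where the paper works: it takes the line $L'$ through $(a,b)$ and the hull vertex adjacent to it on the right, uses Proposition~\ref{charact_trcp_tacp} (rightmost removability forces $L'$ to meet $\Conv(\bbN^2\setminus\tau)$), and Proposition~\ref{charact_orcp_oacp} together with Lemma~\ref{lemma-binary-search} to place that intersection to the left of $(a,b)$, concluding that $L'$ is at least as steep as $L$ and that all of $\tau$ lies weakly below $L'$. Without an argument of this kind, injectivity of $\phi$ (equivalently $\psi\circ\phi=\mathrm{id}$) is not established, so as written the proposal has a genuine gap in both directions, even though the construction of $\psi$ itself coincides with the paper's $L_\varepsilon$ partition.
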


\begin{proof}
It is clear from the definition of $\phi$ that if $\tau\in\Delta\setminus\cI$, then $\phi(\tau)\in Q$.

Let us first prove that $\phi$ is surjective. Given $(a,b,d,e)\in Q$, let $L$ be the line passing through $(a,b)$ and $(a-d,b+e)$, which has equation $e(x-a) + d(y-b) = 0$. 
Let $L_\varepsilon$ be the line with equation $(e - \varepsilon)(x-a) + d(y-b) = 0$, where $\varepsilon$ is a positive irrational number small enough so that there are no lattice points in the open region between $L_\varepsilon$ and $L$ in the first quadrant.
Let $\tau$ be the triangular partition cut off by $L_\varepsilon$. We claim that $\phi(\tau) = (a,b,d,e)$.

Indeed, since $\varepsilon$ is irrational, $c = (a,b)$ is the only lattice point in $L_\varepsilon$. It follows that $c$ is removable in $\tau$, since a cutting line for $\tau\setminus\{c\}$ can be obtained with a small perturbation of $L_\varepsilon$. Clearly, $(a - d, b + e)\in\bbN^2\setminus\tau$ because this point lies above $L_\varepsilon$, and $d < a$. Additionally, for any $d',e'\in\bbN$ with $e'/d' < e/d$, we have $(a - d', b + e')\notin\bbN^2\setminus\tau$ because there are no lattice points between $L_\varepsilon$ and $L$. To show that $c$ is the rightmost removable cell of $\tau$, suppose for contradiction that there was another removable cell $c'$ to the right of $c$. Then $c'$ must lie weakly below $L$, since there are no lattice points (other than $c$) on $L_\varepsilon$ or between $L_\varepsilon$ and $L$. Any cutting line for $\tau\setminus\{c'\}$ would pass below $c'$ and above $c$, forcing it to pass above $(a - d, b + e)$, which is not in $\tau$, reaching a contradiction. 

To prove that $\phi$ is injective, we will argue that if $\nu\in\Delta\setminus\cI$ is such that $\phi(\nu) = (a,b,d,e)$, then $\nu=\tau$. By the definition of $\phi$, all points in $\bbN^2$ lying strictly below $L$ and weakly to the left of $c=(a,b)$ belong to $\nu$. By Lemma~\ref{line_touching_removable_cell}, there exists a cutting line passing through $c$, and the point $(a - d, b + e)$ must lie above this line. Thus, all points in $\bbN^2$ lying weakly to the left of $c$ and weakly above $L$ belong to $\bbN^2\setminus\nu$, while those lying weakly below $L$ and to the right of $c$ belong to $\nu$. If $b = 1$, this implies that $L_\varepsilon$ cuts off $\nu$, and $\nu = \tau$ follows. Otherwise, let $L'$ be the line through $c$ and the vertex of $\Conv(\nu)$ adjacent to $c$ from the right. Since $c$ is the rightmost removable cell of $\nu$, the line $L'$ must intersect $\Conv(\bbN^2\setminus\nu)$ by Proposition~\ref{charact_trcp_tacp}. Additionally, by Proposition~\ref{charact_orcp_oacp} and Lemma~\ref{lemma-binary-search}, this intersection must occur to the left of $c$, and so the point $(a - d, b + e)$ must be weakly below $L'$.
In other words, the slope of $L'$, in absolute value, is greater than or equal to that of $L$. Since all of $\nu$ lies weakly below $L'$, all the cells lying to the right of $c$ and strictly above $L$ must belong to $\bbN^2\setminus\nu$. It follows that $L_\varepsilon$ cuts off $\nu$, implying that $\nu = \tau$.
\end{proof}

Next we will determine the possible values of the image $\phi(\tau)=(a,b,d,e)$ for partitions $\tau$ that fit inside an $\ell\times\ell$ square. 
The next two lemmas characterize, for given $d,e\le\ell$ with $\gcd(d,e)=1$, what are the possible values of $a$ and $b$ that are obtained.
We treat the cases $d<e$ and $d\ge e$ separately, and they are
illustrated in Figures \ref{example_triangle_1} and \ref{example_triangle_2}, respectively.

For positive integers $d,e,\ell$ with 
$d<e$, define the triangle
$$T_{d,e,\ell}^<=\{(x,y)\in\bbR^2\mid x\ge d+1,\,y\ge1,\,ex+dy\le e+d(\ell+1)\}.$$

\begin{lemma}
\label{LxL_triangle1}
Let $\tau\in\Delta\setminus\cI$ with $\phi(\tau)=(a,b,d,e)$, and suppose that $d<e$. Then $\tau\in\Delta^{\ell\times\ell}$  if and only if $(a,b)\in T_{d,e,\ell}^<$.
\end{lemma}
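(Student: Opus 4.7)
The plan is to unpack the conditions defining $T_{d,e,\ell}^<$ geometrically, then leverage the explicit cutting line $L_\varepsilon$ from the proof of Lemma~\ref{lem:phi_bijection}. Recall that $\tau = \phi^{-1}(a,b,d,e)$ is cut off by the line $L_\varepsilon\colon (e-\varepsilon)(x-a) + d(y-b)=0$ for a small irrational $\varepsilon>0$, and the underlying line $L\colon ex + dy = ea+db$ passes through $(a,b)$ with slope $-e/d$. The key observation is that the defining inequality $ex + dy \le e + d(\ell+1)$ is exactly the statement that the lattice point $(1,\ell+1)$ lies weakly above $L$. The conditions $x\ge d+1$ and $y\ge 1$ match, respectively, the constraint $d<a$ built into $\phi$ and the fact that $(a,b)$ is a cell.

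For the forward direction, I would assume $\tau\in\Delta^{\ell\times\ell}$. The inequality $a\ge d+1$ comes from the definition of $\phi$ and $b\ge1$ because $(a,b)$ is a cell. The interesting condition is $ea+db \le e+d(\ell+1)$. Since $\tau$ fits in an $\ell\times\ell$ square, $(1,\ell+1)\notin\tau$, which means $(e-\varepsilon)(1-a) + d(\ell+1-b) > 0$. Because the quantity $e(1-a) + d(\ell+1-b)$ is an integer and $a\ge 2$, this forces $e(1-a) + d(\ell+1-b) \ge 0$ (otherwise the value would be $\le -1$, and a small enough $\varepsilon$ would not push it above $0$), yielding the claimed inequality.

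For the converse direction, I would assume $(a,b)\in T_{d,e,\ell}^<$, so that $\tau = \phi^{-1}(a,b,d,e)$ exists by Lemma~\ref{lem:phi_bijection}. The height bound follows immediately: $ea+db\le e + d(\ell+1)$ places $(1,\ell+1)$ weakly above $L$, and slightly tilting to $L_\varepsilon$ keeps it strictly above, so $(1,\ell+1)\notin\tau$. The nontrivial part, and the only place where the hypothesis $d<e$ is used, is showing that the width condition $(\ell+1,1)\notin\tau$ is automatic. Rewriting $ea+db\le e+d(\ell+1)$ as $e(a-1) + d(b-1)\le d\ell$ and using $d<e$, I get $e(a-1) + d(b-1) < e\ell$, equivalently $e(\ell+1-a) + d(1-b) > 0$. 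Since $\ell+1-a \ge 1$ (this itself follows from the inequality together with $b\ge 1$ and $d<e$, giving $a \le 1 + (d/e)\ell < \ell+1$), the perturbation to $L_\varepsilon$ preserves positivity, so $(\ell+1,1)\notin\tau$ as required.

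The main obstacle is the asymmetry between the height and width constraints in the definition of $T_{d,e,\ell}^<$: only the height condition appears explicitly, and I expect the crux to be cleanly justifying that the width condition is redundant under the assumption $d<e$. Beyond that, care is needed with the integrality argument that swaps strict inequalities at $L_\varepsilon$ for weak inequalities at $L$, making sure no lattice point $(1,\ell+1)$ or $(\ell+1,1)$ is spuriously captured or excluded as $\varepsilon\to 0^+$.
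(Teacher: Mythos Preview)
Your proposal is correct and follows essentially the same approach as the paper. The only difference is stylistic: the paper invokes the clean characterization (established in the proof of Lemma~\ref{lem:phi_bijection}) that a lattice point lies in $\bbN^2\setminus\tau$ if and only if it is strictly left of $(a,b)$ and weakly above $L$, or weakly right of $(a,b)$ and strictly above $L$---so the condition $(1,\ell+1)\notin\tau$ becomes directly the weak inequality $e+d(\ell+1)\ge ea+db$, with no $\varepsilon$ needed. Your integrality argument with $L_\varepsilon$ re-derives exactly this, and your derivation that $d<e$ makes the width constraint $(\ell+1,1)\notin\tau$ automatic matches the paper's one-line observation verbatim.
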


\begin{proof}
Let $L$ be the line described in the proof of Lemma~\ref{lem:phi_bijection}. As shown in that proof, a point belongs to $\bbN^2\setminus\tau$ if and only if it lies strictly to the left of $(a,b)$ and weakly above $L$ or weakly to the right of $(a,b)$ and strictly above $L$. Since $d < e$, requiring that $(1, \ell + 1)$ lies weakly above $L$ forces $(\ell + 1, 1)$ to lie strictly above $L$.

We deduce that $\tau\in\Delta^{\ell\times\ell}$ if and only if $(1, \ell + 1)$ lies weakly above $L$, that is, $e + d(\ell + 1) \geq ae + bd$. Noting that  $b\ge1$ and $a\ge d+1$ (since $d<a$), this inequality is  satisfied precisely when $(a,b)\in T_{d,e,\ell}^<$.
\end{proof}

\begin{figure}[ht]
\centering
	\begin{tikzpicture}[scale=.56]
        \filldraw[color=green, thick, fill=green!40] (3,3) -- (3,1) -- (13/3, 1) -- (3,3);
        \filldraw[color=black, fill=yellow, semithick] (0,0) -- (0,3) -- (1,3) -- (1,2) -- (2,2) -- (2,1) -- (3,1) -- (3,0) -- (0,0);
        \axes{6}{7}
        \draw[red, thick] (0,0) rectangle (5,5);
        \filldraw[gray] (3,3) circle (2.5pt);
        \filldraw[gray] (3,2) circle (2.5pt);
        \filldraw[gray] (3,1) circle (2.5pt);
        \filldraw[gray] (4,1) circle (2.5pt);
        \draw[orange, thick] (0,5.5) -- (11/3,0);
        \draw[black] (1,4) circle (3pt);
        \filldraw[black] (3,1) circle (3pt);

\begin{scope}[shift={(7.5,0)}]
        \filldraw[color=green, thick, fill=green!40] (3,3) -- (3,1) -- (13/3, 1) -- (3,3);
        \filldraw[color=black, fill=yellow, semithick] (0,0) -- (0,4) -- (1,4) -- (1,3) -- (2,3) -- (2,2) -- (3,2) -- (3,0) -- (0,0);
        \axes{6}{7}
        \draw[red, thick] (0,0) rectangle (5,5);
        \filldraw[gray] (3,3) circle (2.5pt);
        \filldraw[gray] (3,2) circle (2.5pt);
        \filldraw[gray] (3,1) circle (2.5pt);
        \filldraw[gray] (4,1) circle (2.5pt);
        \draw[orange, thick] (0,6.5) -- (13/3,0);
        \draw[black] (1,5) circle (3pt);
        \filldraw[black] (3,2) circle (3pt);
\end{scope}
 
 \begin{scope}[shift={(15,0)}]
        \filldraw[color=green, thick, fill=green!40] (3,3) -- (3,1) -- (13/3, 1) -- (3,3);
        \filldraw[color=black, fill=yellow, semithick] (0,0) -- (0,5) -- (1,5) -- (1,4) -- (2,4) -- (2,3) -- (3,3) -- (3,1) -- (4,1) -- (4,0) -- (0,0);
        \axes{6}{7}
        \draw[red, thick] (0,0) rectangle (5,5);
        \filldraw[gray] (3,3) circle (2.5pt);
        \filldraw[gray] (3,2) circle (2.5pt);
        \filldraw[gray] (3,1) circle (2.5pt);
        \filldraw[gray] (4,1) circle (2.5pt);
        \draw[orange, thick] (1/3,7) -- (5,0);
        \draw[black] (1,6) circle (3pt);
        \filldraw[black] (3,3) circle (3pt);
\end{scope}

\begin{scope}[shift={(22.5,0)}]
        \filldraw[color=green, thick, fill=green!40] (3,3) -- (3,1) -- (13/3, 1) -- (3,3);
        \filldraw[color=black, fill=yellow, semithick] (0,0) -- (0,5) -- (1,5) -- (1,3) -- (2,3) -- (2,2) -- (3,2) -- (3,1) -- (4,1) -- (4,0) -- (0,0);
        \axes{6}{7}
        \draw[red, thick] (0,0) rectangle (5,5);
        \filldraw[gray] (3,3) circle (2.5pt);
        \filldraw[gray] (3,2) circle (2.5pt);
        \filldraw[gray] (3,1) circle (2.5pt);
        \filldraw[gray] (4,1) circle (2.5pt);
        \draw[orange, thick] (0,7) -- (14/3,0);
        \draw[black] (2,4) circle (3pt);
        \filldraw[black] (4,1) circle (3pt);
\end{scope}
	\end{tikzpicture}
\caption{A triangular partition $\tau$ with $\phi(\tau) = (a,b,2,3)$ fits in a $5\times5$ square if and only if the point $(a,b)$ is in the triangle $T_{2,3,5}^<$, colored in green.}
\label{example_triangle_1}
\end{figure}
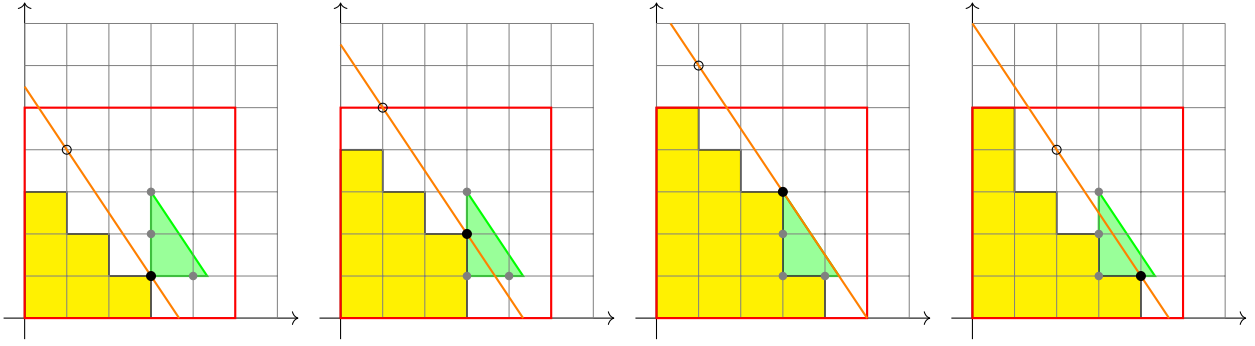

For positive integers $d,e,\ell$ with $e\le d$, define the triangle
$$T_{d,e,\ell}^\ge=\{(x,y)\in\bbR^2\mid x\ge d+1,\,y\ge1,\,ex+dy< e(\ell+1)+d\}.$$
Note that the last inequality is strict, unlike in the definition of $T_{d,e,\ell}^<$.

\begin{lemma}
\label{LxL_triangle2}
Let $\tau\in\Delta\setminus\cI$ with $\phi(\tau)=(a,b,d,e)$, and suppose that $e\le d$. Then $\tau\in\Delta^{\ell\times\ell}$ if and only if $(a,b)\in T_{d,e,\ell}^\ge$.
\end{lemma}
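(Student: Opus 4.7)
The plan is to mirror the proof of Lemma \ref{LxL_triangle1}, interchanging the roles played by the two corner lattice points $(1,\ell+1)$ and $(\ell+1,1)$ of the $\ell\times\ell$ square. First I would reintroduce the line $L$ with equation $e(x-a)+d(y-b)=0$ from the proof of Lemma~\ref{lem:phi_bijection}, and recall the geometric description established there: a point belongs to $\bbN^2\setminus\tau$ if and only if it lies strictly to the left of $(a,b)$ and weakly above $L$, or weakly to the right of $(a,b)$ and strictly above $L$.

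Next, since $\tau\in\Delta^{\ell\times\ell}$ is equivalent to both $(1,\ell+1)$ and $(\ell+1,1)$ lying in $\bbN^2\setminus\tau$, and since $a\ge d+1\ge 2$ ensures $(1,\ell+1)$ is strictly left of $(a,b)$, these two conditions translate into
$$ea+db\le e+d(\ell+1)\qquad\text{and}\qquad ea+db<e(\ell+1)+d.$$
The hypothesis $e\le d$ enters decisively at this point: subtracting right-hand sides gives $e(\ell+1)+d-(e+d(\ell+1))=(e-d)\ell\le 0$, so the strict inequality implies the weak one. Thus $\tau\in\Delta^{\ell\times\ell}$ reduces to the single strict constraint $ea+db<e(\ell+1)+d$, which together with the bounds $a\ge d+1$ and $b\ge 1$ is precisely membership in $T_{d,e,\ell}^\ge$. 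A quick check confirms that $ea+db<e(\ell+1)+d$ together with $b\ge 1$ also forces $a\le\ell$, so the requirement that $(\ell+1,1)$ be weakly right of $(a,b)$ (needed to apply the geometric description) holds automatically.

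The only delicate point is tracking the asymmetry between the weak-above condition governing the left side of $(a,b)$ and the strict-above condition on the right side; this asymmetry is exactly what produces the strict inequality in the definition of $T_{d,e,\ell}^\ge$, in contrast to the weak inequality defining $T_{d,e,\ell}^<$. The edge case $d=e$ (forced by coprimality to be $d=e=1$) collapses both inequalities to a common one, but is handled uniformly by the same argument.
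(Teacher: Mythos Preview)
Your proposal is correct and follows essentially the same approach as the paper's proof, which simply states that the argument is analogous to Lemma~\ref{LxL_triangle1} with the roles of the two corners swapped, reducing to the single condition $e(\ell+1)+d>ea+db$. One minor simplification: your final check that $a\le\ell$ is not needed, since the geometric description from Lemma~\ref{lem:phi_bijection} shows that any point strictly above $L$ lies in $\bbN^2\setminus\tau$ regardless of whether it is left or right of $(a,b)$.
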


\begin{proof}
The proof is analogous to that of Lemma~\ref{LxL_triangle1}, except that since now $e \leq d$, requiring that $(\ell + 1, 1)$ lies strictly above $L$ forces $(1, \ell + 1)$ to lie strictly above $L$.
We deduce that $\tau\in\Delta^{\ell\times\ell}$ if and only if $(\ell + 1, 1)$ lies strictly above $L$, that is, $e(\ell + 1) + d > ae + bd$. Noting that $b \ge 1$ and $a\ge d + 1$, this is equivalent to requiring  $(a,b)\in T_{d,e,\ell}^\ge$.
\end{proof}

\begin{figure}[ht]
\centering
	\begin{tikzpicture}[scale=.6]
        \filldraw[color=white, fill=blue!40] (6,1) -- (4,1) -- (4, 7/3) -- (6,1);
        \filldraw[color=black, fill=yellow, semithick] (0,0) -- (0,2) -- (2,2) -- (2,1) -- (4,1) -- (4,0) -- (0,0);
        \draw[blue, thick, dashed] (6,1) -- (4,7/3);
        \draw[blue, thick] (6,1) -- (4,1) -- (4,7/3);
        \axes{7}{6}
        \draw[red, thick] (0,0) rectangle (5,5);
        \filldraw[gray] (5,1) circle (2.5pt);
        \filldraw[gray] (4,1) circle (2.5pt);
        \filldraw[gray] (4,2) circle (2.5pt);
        \draw[orange, thick] (0,11/3) -- (5.5,0);
        \draw[black] (1,3) circle (3pt);
        \filldraw[black] (4,1) circle (3pt);

\begin{scope}[shift={(9,0)}]
        \filldraw[color=white, fill=blue!40] (6,1) -- (4,1) -- (4, 7/3) -- (6,1);
        \draw[blue, thick, dashed] (6,1) -- (4,7/3);
        \draw[blue, thick] (6,1) -- (4,1) -- (4,7/3);
        \filldraw[color=black, fill=yellow, semithick] (0,0) -- (0,3) -- (2,3) -- (2,2) -- (4,2) -- (4,1) -- (5,1) -- (5,0) -- (0,0);
        \axes{7}{6}
        \draw[red, thick] (0,0) rectangle (5,5);
        \filldraw[gray] (5,1) circle (2.5pt);
        \filldraw[gray] (4,1) circle (2.5pt);
        \filldraw[gray] (4,2) circle (2.5pt);
        \draw[orange, thick] (0,14/3) -- (7,0);
        \draw[black] (1,4) circle (3pt);
        \filldraw[black] (4,2) circle (3pt);
\end{scope}

\begin{scope}[shift={(18,0)}]
        \filldraw[color=white, fill=blue!40] (6,1) -- (4,1) -- (4, 7/3) -- (6,1);
        \draw[blue, thick, dashed] (6,1) -- (4,7/3);
        \draw[blue, thick] (6,1) -- (4,1) -- (4,7/3);
        \filldraw[color=black, fill=yellow, semithick] (0,0) -- (0,3) -- (1,3) -- (1,2) -- (3,2) -- (3,1) -- (5,1) -- (5,0) -- (0,0);
        \axes{7}{6}
        \draw[red, thick] (0,0) rectangle (5,5);
       \filldraw[gray] (5,1) circle (2.5pt);
        \filldraw[gray] (4,1) circle (2.5pt);
        \filldraw[gray] (4,2) circle (2.5pt);
        \draw[orange, thick] (0,13/3) -- (6.5,0);
        \draw[black] (2,3) circle (3pt);
        \filldraw[black] (5,1) circle (3pt);
\end{scope}
	\end{tikzpicture}
\caption{A triangular partition $\tau$ with $\phi(\tau) = (a,b,3,2)$ fits in a $5\times5$ square if and only if the point $(a,b)$ is in the triangle $T_{3,2,5}^\ge$, colored in blue.}
\label{example_triangle_2}
\end{figure}

To count the number of lattice points in the above triangles, it is convenient to pair up  $T_{d,e,\ell}^<$ and $T_{e, e - d, \ell}^\ge$ for $d<e$. The next lemma could be proved by induction on $\ell$, using some elementary number theory, but we prefer to instead present a geometric argument, illustrated in Figure \ref{example_gluing_triangles}, that provides more intuition for the resulting binomial coefficient.

\begin{lemma}
\label{LxL_sum_of_triangles}
Let $\ell,d,e\in\bbN$ such that $1\le d < e \leq \ell$. Then,
$$
\left| T_{d,e,\ell}^<\cap\bbN^2 \right| + \left| T_{e, e - d, \ell}^\ge\cap\bbN^2 \right| = \binom{\ell - e + 2}{2}.
$$
\end{lemma}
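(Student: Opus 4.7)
My plan is to shift both triangles to the origin, reformulate the lattice counts on a common simplex in $\bbZ^3$, and exhibit a bijection via a cyclic permutation of coordinates.

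First I would apply the translations $(i,j) = (x-d-1,\,y-1)$ to $T_{d,e,\ell}^<$ and $(i,j) = (x-e-1,\,y-1)$ to $T_{e,e-d,\ell}^\ge$. Writing $m = \ell - e$ and $d' = e - d$ (so $d' > 0$, $\gcd(d',e) = 1$, and $d + d' = e$), a direct calculation gives
\[
|T_{d,e,\ell}^< \cap \bbN^2| = |A|, \qquad |T_{e,e-d,\ell}^\ge \cap \bbN^2| = |B|,
\]
where $A = \{(i,j) \in \bbZ_{\ge 0}^2 : ei+dj \le dm\}$ and $B = \{(i,j)\in\bbZ_{\ge 0}^2 : d'i+ej < d'm\}$. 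Since $\binom{\ell-e+2}{2} = \binom{m+2}{2} = |C|$ for $C := \{(i,j)\in\bbZ_{\ge 0}^2 : i+j \le m\}$, the task reduces to showing $|A| + |B| = |C|$, which I would achieve by building an explicit bijection $C \setminus A \to B$.

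The pivotal step is to introduce the auxiliary coordinate $k := m - i - j$, so that $C$ is parametrized by nonnegative triples $(i,j,k)$ summing to $m$. Using $e = d + d'$ together with $i+j+k = m$, the inequality cutting out $A$ simplifies to $d'i \le dk$, while the inequality cutting out $B$ simplifies to $dj < d'k$; the latter automatically forces $k \ge 1$, so $B \subseteq C$ comes at no cost. Thus $C \setminus A$ and $B$ sit inside the simplex $\{(i,j,k)\in\bbZ_{\ge 0}^3 : i+j+k = m\}$ as the solution sets of $d'i > dk$ and $dj < d'k$, respectively. The cyclic shift $\sigma(i,j,k) := (j,k,i)$ permutes the simplex, and sends any $(i,j,k) \in C \setminus A$ to a triple whose middle and last coordinates are $k$ and $i$; the $B$-condition on $\sigma(i,j,k)$ therefore reads $dk < d'i$, which is exactly $d'i > dk$. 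Hence $\sigma$ restricts to the required bijection $C \setminus A \to B$, and $|A| + |B| = |C| = \binom{\ell-e+2}{2}$.

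The main obstacle, I expect, is finding the correct change of variables. The triangles $A$ and $B$ have hypotenuses of different slopes and exhibit no obvious symmetry in $(i,j)$-coordinates; the three-fold cyclic symmetry only becomes visible after introducing the coordinate $k$ and exploiting $e = d + d'$ to put both inequalities into homogeneous form on the simplex. Once that setup is in place, the remaining verifications are routine.
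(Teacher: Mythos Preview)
Your proof is correct. After your translations, the underlying bijection $B \to C\setminus A$ that you build via the cyclic shift $\sigma^{-1}(i',j',k')=(k',i',j')$ coincides, once translated back to the original $(x,y)$-coordinates, with the affine map $(x,y)\mapsto(-x-y+d+\ell+3,\,x-e)$ that the paper uses to carry $T_{e,e-d,\ell}^{\ge}$ onto a triangle $T'_{d,e,\ell}$ sitting next to $T_{d,e,\ell}^{<}$; the union is then the right triangle $\{x\ge d+1,\ y\ge1,\ x+y\le d-e+\ell+2\}$ with $\binom{\ell-e+2}{2}$ lattice points.

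So the two arguments implement the same bijection, but the presentations differ. The paper's is purely planar and visual: it literally glues the two triangles side by side. Your simplex reformulation explains \emph{why} that affine map exists: the substitution $e=d+d'$ homogenizes both inequalities to $d'i\le dk$ and $dj<d'k$ on the $2$-simplex $\{i+j+k=m\}$, and the required bijection is then just a coordinate rotation. This is a genuinely nicer way to discover the map, and it makes the complementary pair of weak/strict inequalities transparent. The paper's version, on the other hand, needs no auxiliary third coordinate and is easier to draw.
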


\begin{proof}
Consider the affine transformation $\bbR^2\to\bbR^2$ given by
$$\begin{cases} u=-x-y+d+\ell+3,\\
v=x-e.\end{cases}$$
This transformation is bijective, with inverse
$$\begin{cases} x=v+e,\\
y=-u-v+d-e+\ell+3,\end{cases}$$
and it preserves the lattice $\bbZ^2$. The image of $T_{e, e - d, \ell}^\geq$ under this transformation is
$$T'_{d, e, \ell}\coloneqq\{(u,v)\in\bbR^2\mid v\ge1,\,u+v\le d-e+\ell+2,\,eu+dv>d(\ell+1)+e\}.$$

The triangles $T_{d,e,\ell}^<$ and $T'_{d,e,\ell}$ are disjoint, and their union is the triangle 
$$T_{d, e, \ell}\coloneqq T_{d,e,\ell}^<\sqcup T'_{d,e,\ell} =\{(x,y)\in\bbR^2\mid x\ge d+1,\,y\ge1,\,x+y\le d-e+\ell+2\}.$$
The lattice points in $T_{d, e, \ell}$ consist of horizontal rows of cardinality $\ell - e + 1$, $\ell - e$, \dots , $2$, $1$. Thus,
$$
\left| T_{d,e,\ell}^<\cap\bbN^2 \right| + \left| T_{e, e - d, \ell}^\geq\cap\bbN^2 \right| = \left| T_{d,e,\ell}^<\cap\bbN^2 \right| + \left|T'_{d, e, \ell}\cap\bbN^2 \right|
= \left| T_{d, e, \ell}\cap\bbN^2 \right| = \binom{\ell - e + 2}{2}.\qedhere
$$
\end{proof}

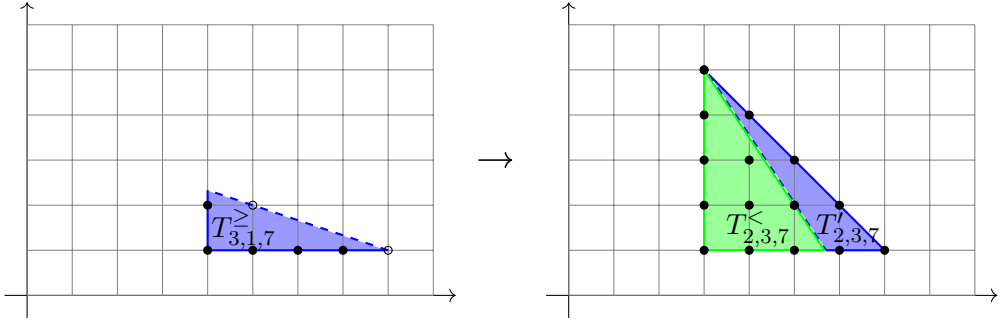
\begin{figure}[ht]
\centering
	\begin{tikzpicture}[scale=.6]
        \filldraw[color=white, fill=blue!40] (8,1) -- (4,1) -- (4, 7/3) -- (8,1);
        \axes{9}{6}
        \draw[blue, thick, dashed] (8,1) -- (4,7/3);
        \draw[blue, thick] (8,1) -- (4,1) -- (4,7/3);
        \draw[black] (8,1) circle (2.5pt);
        \draw[black] (5,2) circle (2.5pt);
        \filldraw[black] (7,1) circle (2.5pt);
        \filldraw[black] (6,1) circle (2.5pt);
        \filldraw[black] (5,1) circle (2.5pt);
        \filldraw[black] (4,1) circle (2.5pt);
        \filldraw[black] (4,2) circle (2.5pt);
        \node at (4.8,1.47) {$T_{3,1,7}^\geq$};

        \draw[->, semithick] (10, 3) -- (10.75, 3);
        
 \begin{scope}[shift={(12,0)}]
        \filldraw[color=white, fill=blue!40] (3.1,4.9) -- (5.7,1) -- (7, 1) -- (3.1,4.9);
        \filldraw[color=white, fill=green!40] (3,1) -- (3,5) -- (17/3, 1) -- (3,1);
        \axes{9}{6}
        \draw[blue, thick, dashed] (3.1,4.9) -- (5.7,1);
         \draw[blue, thick] (5.7,1) -- (7,1) -- (3.1,4.9);
         \draw[green, thick] (3,1) -- (3,5) -- (17/3,1)  -- (3,1);
        \draw[black] (3,5) circle (2.5pt);
        \draw[black] (5,2) circle (2.5pt);
        \filldraw[black] (6,1) circle (2.5pt);
        \filldraw[black] (7,1) circle (2.5pt);
        \filldraw[black] (6,2) circle (2.5pt);
        \filldraw[black] (5,3) circle (2.5pt);
        \filldraw[black] (4,4) circle (2.5pt);
        \node at (6.2,1.47) {$T'_{2,3,7}$};
      
        \filldraw[black] (3,5) circle (2.5pt);
        \filldraw[black] (3,4) circle (2.5pt);
        \filldraw[black] (3,3) circle (2.5pt);
        \filldraw[black] (3,2) circle (2.5pt);
        \filldraw[black] (3,1) circle (2.5pt);
        \filldraw[black] (4,3) circle (2.5pt);
        \filldraw[black] (4,2) circle (2.5pt);
        \filldraw[black] (4,1) circle (2.5pt);
        \filldraw[black] (5,2) circle (2.5pt);
        \filldraw[black] (5,1) circle (2.5pt);
        \node at (4.2,1.47) {$T_{2,3,7}^<$};
\end{scope}
	\end{tikzpicture}
\caption{Example of the proof of Lemma \ref{LxL_sum_of_triangles} for $d=2$, $e=3$ and $\ell=7$.}
\label{example_gluing_triangles}
\end{figure}

We are now ready to give a self-contained proof of our formula for $\I(\sigma^\ell)=\left|\Delta^{\ell\times\ell}\right|$.

\begin{proof}[Combinatorial proof of Theorem \ref{thm:staircase}]
Let $\tau\in\Delta\setminus\cI$ and $\phi(\tau) = (a,b,d,e)\in Q$, as defined in Lemma~\ref{lem:phi_bijection}. By Lemmas \ref{LxL_triangle1} and \ref{LxL_triangle2}, we have $\tau\in\Delta^{\ell\times\ell}$ if and only if $d < e$ and $(a,b)\in T_{d,e,\ell}^<$, or $d\geq e$ and $(a,b)\in T_{d,e,\ell}^\geq$. Accounting for the $\ell+1$ partitions in $\cI\cap\Delta^{\ell\times\ell}$ (including the empty partition), we get
$$
\left|\Delta^{\ell\times\ell}\right| = \ell +1 + \sum_{\substack{1\leq d < e \leq \ell \\ \gcd(d,e) = 1}}\left| T_{d,e,\ell}^<\cap\bbN^2 \right| + \sum_{\substack{1\leq e' \leq d' \leq \ell \\ \gcd(d',e') = 1}}\left|T_{d',e',\ell}^\geq \cap\bbN^2 \right|.
$$

The bijection $$\{(d,e)\in\bbN^2\mid d < e,\;\gcd(d,e) = 1 \}\to\{(d',e')\in\bbN^2\mid d' > e',\;\gcd(d',e') = 1 \}$$ given by $(d,e)\mapsto (e, e - d)$ allows us to combine the summations as
$$
\left|\Delta^{\ell\times\ell}\right| = \ell +1 + \left| T_{1,1,\ell}^\geq\cap\bbN^2 \right| + \sum_{\substack{1\leq d < e \leq \ell \\ \gcd(d,e) = 1}}\left( \left| T_{d,e,\ell}^<\cap\bbN^2 \right| + \left| T_{e, e - d, \ell}^\geq\cap\bbN^2 \right| \right).
$$
The lattice points in the triangle $T_{1,1,\ell}^\geq$ consist of horizontal rows of cardinality $\ell - 1, \ell - 2, \dots , 2, 1$, and so $\left|T_{1,1,\ell}^\geq\right| =  \binom{\ell}{2}$. Using Lemma \ref{LxL_sum_of_triangles}, we obtain
$$
\left|\Delta^{\ell\times\ell}\right| = \ell +1 + \binom{\ell}{2} + \sum_{\substack{1\leq d < e \leq \ell \\ \gcd(d,e) = 1}}\binom{\ell - e + 2}{2} = 1+ \sum_{e = 1}^\ell\binom{\ell - e + 2}{2}\varphi(e).\qedhere
$$
\end{proof}

We can now easily deduce Lipatov's enumeration formula for balanced words from our results. Indeed, by Lemma~\ref{lem:h<=l,w=l}$(b)$ and Theorem~\ref{thm:staircase}, 
$$|\cB_\ell|=1+\I(\sigma^\ell)-\I(\sigma^{\ell-1})=1+\sum_{i = 1}^{\ell}\binom{\ell - i + 2}{2}\varphi(i) - \sum_{i = 1}^{\ell - 1}\binom{\ell - i + 1}{2}\varphi(i) = 1+\sum_{i = 1}^\ell(\ell - i + 1)\varphi(i),
$$
giving a combinatorial proof of Theorem~\ref{lipatov}.

In the rest of this section, we show that similar formulas for the number of triangular partitions inside other rectangles can be derived from Theorem~\ref{thm:staircase}. However, we do not have a general formula for $\left|\Delta^{h\times\ell}\right|$ for arbitrary $h$ and $\ell$.

\begin{corollary}
\label{formula_LxL-1_rectangle}
For $\ell\ge2$,
$$
\left|\Delta^{\ell\times(\ell-1)}\right|=
\I(\sigma^\ell\setminus\{(\ell,1)\})=\frac{1}{2} + \frac{1}{2}\sum_{i = 1}^\ell(\ell - i + 1)^2\varphi(i).
$$
\end{corollary}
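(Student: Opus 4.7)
The first equality in the corollary, $|\Delta^{\ell\times(\ell-1)}| = \I(\sigma^\ell\setminus\{(\ell,1)\})$, follows from Lemma~\ref{equivalence_subpartitions-rectangle}: plugging $h=\ell$ and the lemma's width parameter equal to $\ell-1$ into the formula $\tau_j=\lfloor(\ell-1)+1-\frac{(\ell-1)(j-1)+1}{\ell}\rfloor$ yields precisely the partition $\sigma^\ell\setminus\{(\ell,1)\}=(\ell-1,\ell-1,\ell-2,\dots,2,1)$, so triangular subpartitions of this partition are exactly the elements of $\Delta^{\ell\times(\ell-1)}$.

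The plan is to reduce the count to the square case covered by Theorem~\ref{thm:staircase}. A triangular partition fits inside an $\ell\times\ell$ square if and only if it either fits inside an $\ell\times(\ell-1)$ rectangle or has width exactly $\ell$ and height at most $\ell$, and these two possibilities are disjoint. By Lemma~\ref{lem:h<=l,w=l}$(a)$, the second set has cardinality $|\cB_\ell|/2$, so
\[
    \left|\Delta^{\ell\times(\ell-1)}\right| = \left|\Delta^{\ell\times\ell}\right| - \frac{|\cB_\ell|}{2}.
\]

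Substituting the formula from Theorem~\ref{thm:staircase} for $|\Delta^{\ell\times\ell}|$ and Lipatov's formula (Theorem~\ref{lipatov}) for $|\cB_\ell|$ (which by now has been proved combinatorially in the previous subsection), this becomes
\[
    \left|\Delta^{\ell\times(\ell-1)}\right| = 1 + \sum_{i=1}^\ell\binom{\ell-i+2}{2}\varphi(i) - \frac{1}{2} - \frac{1}{2}\sum_{i=1}^\ell(\ell-i+1)\varphi(i) = \frac{1}{2} + \sum_{i=1}^\ell\left[\binom{\ell-i+2}{2}-\frac{\ell-i+1}{2}\right]\varphi(i).
\]
The only remaining step is the elementary simplification
\[
    \binom{\ell-i+2}{2}-\frac{\ell-i+1}{2} = \frac{(\ell-i+1)(\ell-i+2) - (\ell-i+1)}{2} = \frac{(\ell-i+1)^2}{2},
\]
which yields the stated identity. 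There is no real obstacle: the only thing to be careful about is verifying the disjoint decomposition of $\Delta^{\ell\times\ell}$ and correctly applying Lemma~\ref{lem:h<=l,w=l}$(a)$, which requires that the partitions of width exactly $\ell$ and height at most $\ell$ are counted (rather than those of width at most $\ell$).
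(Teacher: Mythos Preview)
Your proof is correct and follows essentially the same approach as the paper's own proof: both establish the first equality via Lemma~\ref{equivalence_subpartitions-rectangle}, then subtract from $|\Delta^{\ell\times\ell}|$ the number of triangular partitions of width exactly $\ell$ and height at most $\ell$ (which is $|\cB_\ell|/2$ by Lemma~\ref{lem:h<=l,w=l}$(a)$, or equivalently equation~\eqref{eq:h<=l,w=l}), and simplify using Theorem~\ref{thm:staircase} and Lipatov's formula. Your explicit check that the partition produced by Lemma~\ref{equivalence_subpartitions-rectangle} with $h=\ell$ and width $\ell-1$ is indeed $\sigma^\ell\setminus\{(\ell,1)\}$ is a nice addition that the paper leaves implicit.
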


\begin{proof}
By Lemma \ref{equivalence_subpartitions-rectangle}, the triangular subpartitions of $\sigma^\ell\setminus\{(\ell,1)\}$ are the triangular partitions that fit inside an $\ell\times(\ell-1)$ rectangle; equivalently, those that fit inside an $\ell\times \ell$ square and do not have width exactly $\ell$. Those having width exactly $\ell$ are counted by equation~\eqref{eq:h<=l,w=l}. Subtracting this formula from $\left|\Delta^{\ell\times\ell}\right|$, which is given by Theorem \ref{thm:staircase}, we get
$$\left|\Delta^{\ell\times(\ell-1)}\right| = 1+\sum_{i = 1}^\ell\binom{\ell - i + 2}{2}\varphi(i) - \frac{1}{2} - \frac{1}{2}\sum_{i = 1}^\ell(\ell - i + 1)\varphi(i)
= \frac{1}{2} + \frac{1}{2}\sum_{i = 1}^\ell(\ell - i + 1)^2\varphi(i).\qedhere
$$
\end{proof}

To give a formula for the number of partitions that fit inside an $\ell\times (\ell - 2)$ rectangle, we need the following lemma.

\begin{lemma}
\label{length_exactly_L_height_exactly_L-1}
For $\ell\ge2$, the number of triangular partitions of width $\ell-1$ and height $\ell$ is $\ell - 1$.
\end{lemma}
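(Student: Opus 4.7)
The plan is to reduce the problem to counting a particularly simple family of balanced words via the bijection $\omega$ from Proposition~\ref{sturmian_encoding_1}. Suppose $\tau=\tau_1\dots\tau_\ell$ is a triangular partition of width $\ell-1$ and height $\ell$. Since $\tau_1 = \ell-1 < \ell$, Lemma~\ref{lem:wide} implies that $\tau$ is not wide, hence it is tall. Conjugating, $\tau'$ is a wide triangular partition with width $\ell$ and exactly $\ell-1$ parts, and conjugation is a size-preserving bijection between the two sets. Therefore, it suffices to count wide triangular partitions $\tau'$ with $\tau'_1 = \ell$ and height $\ell-1$.

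By Proposition~\ref{sturmian_encoding_1}, the map $\omega$ gives a bijection between this set and the set of balanced words $w = w_1\dots w_\ell \in \cB_\ell$ starting with a $1$ and having exactly $\ell-1$ ones, that is, words with a single $0$ letter. I would then simply enumerate such words: any such $w$ is of the form $1^{p-1}\,0\,1^{\ell - p}$ for some $p\in\{2,3,\dots,\ell\}$ (note $p\ne 1$ because $w_1 = 1$), yielding exactly $\ell-1$ possibilities.

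The last thing to verify is that every such word actually lies in $\cB_\ell$, which is immediate from the definition in~\eqref{eq:balanced}: for any two factors of the same length, one either contains the unique $0$ or it does not, so their numbers of ones differ by at most one. Assembling these observations gives the count $\ell-1$.

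I do not anticipate any genuine obstacle: once the conjugation reduction is made, everything follows routinely from Proposition~\ref{sturmian_encoding_1}, and the only mild point is remembering that the encoded word must begin with $1$, which is what excludes $p=1$ and gives the count $\ell-1$ rather than $\ell$.
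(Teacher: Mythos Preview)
Your proof is correct and takes a genuinely different route from the paper's.

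The paper argues geometrically: any triangular $\tau$ of width $\ell-1$ and height $\ell$ must contain $(1,\ell)$ and $(\ell-1,1)$, hence the partition $\nu=(\ell-1,\ell-2,\dots,1,1)$, and by Lemma~\ref{equivalence_subpartitions-rectangle} it is contained in $\sigma^\ell\setminus\{(\ell,1)\}$. The possible $\tau$ are then $\nu$ together with an initial segment of the antidiagonal cells $c_i=(i+1,\ell-i)$ for $i\in[\ell-2]$, and the paper checks that each of these $\ell-1$ candidates is in fact triangular by exhibiting a cutting line.

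Your approach instead conjugates and invokes the Sturmian encoding $\omega$ of Proposition~\ref{sturmian_encoding_1}, reducing the count to balanced words in $\cB_\ell$ with a single zero and first letter~$1$, which are trivially enumerated. This is slicker and makes good use of the machinery already developed in Section~\ref{sec:sturmian}; the paper's argument, by contrast, is self-contained and does not depend on that section, which may be why the authors chose it given that Lemma~\ref{length_exactly_L_height_exactly_L-1} is only used as an auxiliary step toward Corollary~\ref{formula_LxL-2_rectangle}. Both approaches are short; yours exploits existing bijections, while the paper's explicitly identifies the $\ell-1$ partitions in question.
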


\begin{proof}
A triangular partition $\tau$ of width $\ell-1$ and height $\ell$ must contain cells $(1, \ell)$ and $(\ell-1, 1)$, and hence all the cells lying weakly below the line segment between these two cells. In particular, letting $\nu=(\ell-1,\ell-2,\dots,1,1)$, we have $\nu\subseteq\tau$. On the other hand, since $\tau\in\Delta^{\ell\times(\ell-1)}$,  Lemma~\ref{equivalence_subpartitions-rectangle} implies that $\tau\subseteq\sigma^\ell\setminus\{(\ell,1)\}$. As a consequence, $\tau = \nu\cup C$, where $C$ is a subset of the cells that belong to $\sigma^\ell\setminus\{(\ell,1)\}$ but not to $\nu$. Denote these cells by $c_1,\dots,c_{\ell-2}$, where $c_i=(i+1,\ell-i)$.

If $c_i\in\tau$ for some $i\in[\ell-2]$, then $c_j\in\tau$ for all $j\in[i]$, since these cells lie on the line segment between $c_i$ and $(1,\ell)$. Thus, $C=\emptyset$ or $C=\{c_1,c_2,\dots,c_i\}$ for some $i\in[\ell-2]$. In the latter case, it is easy to see that the partition $\nu\cup C$ is triangular, since we can find a cutting line through $c_i$ with slope vector $(1/2 + \varepsilon, 1/2 - \varepsilon)$ for small enough $\varepsilon > 0$. Since there are $\ell - 1$ choices for $C$ in total, the result follows.
\end{proof}

\begin{corollary}
\label{formula_LxL-2_rectangle}
For $\ell\ge3$, 
$$\left|\Delta^{\ell\times(\ell-2)}\right|=
1-\ell + \frac{1}{2}\sum_{i = 1}^\ell\big((\ell - i + 1)(\ell - i) + 1\big)\varphi(i).
$$
\end{corollary}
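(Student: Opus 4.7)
The plan is to mirror the derivation of Corollary~\ref{formula_LxL-1_rectangle}, passing this time from $|\Delta^{\ell\times(\ell-1)}|$ down to $|\Delta^{\ell\times(\ell-2)}|$ by subtracting the triangular partitions whose width is exactly $\ell-1$. Concretely, any triangular partition contained in the $\ell\times(\ell-1)$ rectangle but not in the $\ell\times(\ell-2)$ rectangle must have width exactly $\ell-1$ and height at most $\ell$. So it suffices to count those, and then use Corollary~\ref{formula_LxL-1_rectangle}.

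To count triangular partitions of width exactly $\ell-1$ and height at most $\ell$, I would split them by height. Those of height at most $\ell-1$ are, by Lemma~\ref{lem:wide}, exactly the wide triangular partitions of width $\ell-1$, and equation~\eqref{eq:h<=l,w=l} (applied with $\ell-1$ in place of $\ell$) gives $|\cB_{\ell-1}|/2 = \tfrac12 + \tfrac12\sum_{i=1}^{\ell-1}(\ell-i)\varphi(i)$ of them. Those of height exactly $\ell$ are counted by Lemma~\ref{length_exactly_L_height_exactly_L-1}, which gives $\ell-1$.

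Combining these two counts with Corollary~\ref{formula_LxL-1_rectangle} yields
\[
|\Delta^{\ell\times(\ell-2)}|
 = \frac{1}{2} + \frac{1}{2}\sum_{i=1}^{\ell}(\ell-i+1)^2\varphi(i)
 - \frac{1}{2} - \frac{1}{2}\sum_{i=1}^{\ell-1}(\ell-i)\varphi(i)
 - (\ell-1).
\]
Since the $i=\ell$ term of $(\ell-i)\varphi(i)$ vanishes, the second sum can be taken up to $\ell$, after which the two sums merge. The final step is the algebraic identity
\[
(\ell-i+1)^2 - (\ell-i) = (\ell-i+1)(\ell-i) + 1,
\]
which transforms the expression into $1-\ell + \tfrac12\sum_{i=1}^{\ell}\bigl((\ell-i+1)(\ell-i)+1\bigr)\varphi(i)$, as desired.

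I do not foresee a serious obstacle: the two ingredients (the count of wide partitions of given width and the explicit count in Lemma~\ref{length_exactly_L_height_exactly_L-1}) are already proved, and what remains is routine bookkeeping. The only subtle point is confirming that no partition is double-counted when splitting by height, which is immediate since the two subsets (height $\le\ell-1$ versus height $=\ell$) are disjoint and together exhaust partitions of width exactly $\ell-1$ inside the $\ell\times(\ell-1)$ rectangle.
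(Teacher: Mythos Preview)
Your proposal is correct and essentially identical to the paper's proof: both subtract from $|\Delta^{\ell\times(\ell-1)}|$ the triangular partitions of width exactly $\ell-1$, splitting these according to whether the height is at most $\ell-1$ (counted via equation~\eqref{eq:h<=l,w=l} with $\ell-1$ in place of $\ell$) or exactly $\ell$ (counted by Lemma~\ref{length_exactly_L_height_exactly_L-1}), and then perform the same algebraic simplification.
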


\begin{proof}
Partitions in $\Delta^{\ell\times(\ell-2)}$ are precisely those in $\Delta^{\ell\times(\ell-1)}$ (which were counted in Corollary~\ref{formula_LxL-1_rectangle}) whose width is not $\ell - 1$. 
Partitions in $\Delta^{\ell\times(\ell-1)}$ of width $\ell - 1$ either have height at most $\ell-1$ (counted in equation~\eqref{eq:h<=l,w=l} with $\ell-1$ playing the role of $\ell$), or they have height $\ell$ (counted in Lemma~\ref{length_exactly_L_height_exactly_L-1}). Putting these formulas together,
\begin{align*}\left|\Delta^{\ell\times(\ell-2)}\right|&=
\frac{1}{2} + \frac{1}{2}\sum_{i = 1}^\ell(\ell - i + 1)^2\varphi(i)  - \left(\frac{1}{2} + \frac{1}{2}\sum_{i = 1}^{\ell - 1}(\ell - i)\varphi(i)\right) - (\ell - 1)\\
&= 1-\ell + \frac{1}{2}\sum_{i = 1}^\ell\big((\ell - i + 1)(\ell - i) + 1\big)\varphi(i).\qedhere
\end{align*}
\end{proof}

\section{Further directions}\label{sec:further}

In this section we discuss possible generalizations of our work and avenues of further research.

\subsection{Triangular Young tableaux}

Given a partition $\lambda$ of $n$, a \emph{standard Young tableau} of shape $\lambda$ is a filling of the cells of the Young diagram of $\lambda$ with the numbers $1,2,\dots,n$ so that each number appears once, the rows are increasing from left to right, and the columns are increasing from bottom to top. The last two conditions are equivalent to the requirement that for all $i\in[n]$, the cells with labels at most $i$ form the Young diagram of a partition.
One can consider the following triangular analogue of this notion.

\begin{definition}
    Let $\tau$ be a triangular partition of size $n$. A \emph{triangular Young tableau} of shape $\tau$ is a filling of the cells of the Young diagram of $\tau$ with the numbers $1,2,\dots,n$ so that, for all $i\in[n]$, the cells with labels at most $i$ form the Young diagram of a triangular partition.
\end{definition}

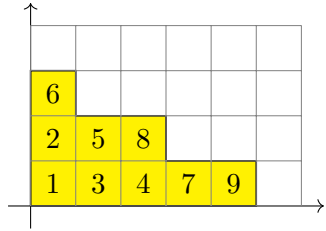
\begin{figure}[ht]
\centering
	\begin{tikzpicture}[scale=.6]
		\filldraw[color=black, fill=yellow, semithick] (0,0) -- (0,3) -- (1,3) -- (1,2) -- (3,2) -- (3,1) -- (5,1) -- (5,0) -- (0,0);
        \axes{6}{4}
        \node at (0.5,0.5) {$1$};
        \node at (0.5,1.5) {$2$};
        \node at (1.5,0.5) {$3$};
        \node at (2.5,0.5) {$4$};
        \node at (1.5,1.5) {$5$};
        \node at (0.5,2.5) {$6$};
        \node at (3.5,0.5) {$7$};
        \node at (2.5,1.5) {$8$};
        \node at (4.5,0.5) {$9$};
	\end{tikzpicture}
\caption{A triangular Young tableau of shape $531$.}
\end{figure}

Similarly to how standard Young tableaux can be viewed as walks in Young's lattice, we can interpret triangular Young tableaux of shape $\tau$ as walks of length $n$ in the Hasse diagram of $\TYP$ from the empty partition to $\tau$.

It is natural to ask if there is a triangular analogue of the hook-length formula counting standard Young tableaux of a given shape. 

\begin{problem}
    Find a formula for the number of triangular Young tableaux of a given shape.
\end{problem}

We can solve this problem in the special case of two-row shapes.

\begin{proposition}
\label{counting_TYT}
    For a two-part triangular partition $\tau = \tau_1\tau_2$, the number of triangular Young tableaux of shape $\tau$ is
    $$
    \frac{\tau_1 - 2\tau_2 + 2}{\tau_1 + 2}\binom{\tau_1 + \tau_2 + 1}{\tau_2}.
    $$
\end{proposition}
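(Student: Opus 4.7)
The plan is to encode each triangular Young tableau $T$ of shape $\tau=\tau_1\tau_2$ by the ballot-type word $c_1c_2\dots c_n\in\{1,2\}^n$ (with $n=\tau_1+\tau_2$) that records, for each $i$, the row containing entry $i$. Writing $x_k$ and $y_k$ for the number of $1$'s and $2$'s among $c_1,\dots,c_k$, the intermediate shape after $k$ entries is the two-row partition $(x_k,y_k)$. Converting triangularity of every such shape into a single partial-sum condition is the main step.

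First I would show, directly from Lemma~\ref{charact_bergeron}, that a two-row partition $\mu_1\mu_2$ with $\mu_2\ge 1$ is triangular if and only if $\mu_1\ge 2\mu_2-1$. Computing arm and leg lengths of the cells of $\mu_1\mu_2$, the maximum in the definition of $t^-_{\mu_1\mu_2}$ is attained at the cell $(\mu_2,1)$ and equals $\frac{1}{\mu_1-\mu_2+2}$, while the minimum defining $t^+_{\mu_1\mu_2}$ is attained at $(1,2)$ and equals $\frac{1}{\mu_2}$. The inequality $t^-<t^+$ simplifies to $\mu_1\ge 2\mu_2-1$ (the other candidate minima $\frac{2}{\mu_1+1}$ and $\frac{1}{\mu_1-\mu_2}$ turn out never to be binding). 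Next, setting $u_k = x_k-2y_k$, each step $c_i=1$ increments $u$ by $1$ and each step $c_i=2$ decrements it by $2$. For $y_k\ge 1$, the intermediate shape is triangular iff $u_k\ge -1$; for $y_k=0$ the shape is automatically triangular and $u_k=x_k\ge 0>-1$ holds trivially. Moreover, $u_k\ge -1$ with $y_k\ge 1$ forces $x_k\ge 2y_k-1\ge y_k$, so the shape-is-a-partition condition is automatic. Therefore, triangular Young tableaux of shape $\tau_1\tau_2$ are in bijection with sequences $a_1,\dots,a_n\in\{+1,-2\}$ having $\tau_1$ entries $+1$ and $\tau_2$ entries $-2$, all of whose partial sums are $\ge -1$.

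Finally, the plan is to apply the Dvoretzky--Motzkin cycle lemma after prepending two $+1$'s: $(a_1,\dots,a_n)\mapsto (+1,+1,a_1,\dots,a_n)$. The new partial sums are $1,\,2,\,2+\sum_{i\le k}a_i$, all of which are strictly positive iff the original ones were $\ge -1$. Conversely, any sequence of length $\tau_1+\tau_2+2$ with $\tau_1+2$ entries $+1$, $\tau_2$ entries $-2$, and all partial sums strictly positive must begin with $(+1,+1)$ (otherwise the first or second partial sum would be nonpositive). This identifies triangular Young tableaux of shape $\tau_1\tau_2$ with arrangements of $\tau_1+2$ copies of $+1$ and $\tau_2$ copies of $-2$ having all partial sums positive. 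Their common sum $\tau_1+2-2\tau_2$ is positive (the triangularity condition $\tau_1\ge 2\tau_2-1$), so the cycle lemma gives the count
\[
\frac{\tau_1+2-2\tau_2}{\tau_1+\tau_2+2}\binom{\tau_1+\tau_2+2}{\tau_2} \;=\; \frac{\tau_1-2\tau_2+2}{\tau_1+2}\binom{\tau_1+\tau_2+1}{\tau_2},
\]
which is the stated formula.

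The main obstacle is the careful verification of the two-row triangularity criterion and the fact that the single partial-sum condition $u_k\ge -1$ fully captures both triangularity at every step and the partition-shape constraint; once that bookkeeping is correct, the prepending trick and cycle-lemma computation are routine.
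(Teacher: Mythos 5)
Your proposal is correct and takes essentially the same route as the paper: the two-row criterion $\mu_1\ge 2\mu_2-1$ derived from Lemma~\ref{charact_bergeron}, the encoding of tableaux by row-words (equivalently, ballot-type lattice paths), and a ballot-style count, which you make explicit via the Dvoretzky--Motzkin cycle lemma where the paper simply says the paths are ``counted similarly to ballot paths.'' One small imprecision: the minimum defining $t^+$ need not be attained at $(1,2)$ (e.g.\ for $\mu_1$ large relative to $\mu_2$ the cell $(\mu_2+1,1)$ gives a smaller value), but as your parenthetical notes those other candidates are never the binding constraint, so the criterion and the rest of the argument stand.
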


The proof of this result relies on the observation that an integer partition $\lambda = \lambda_1\lambda_2$ is triangular if and only if $\lambda_1 \geq 2\lambda_2 - 1$. We deduce that triangular Young tableaux of shape $\tau = \tau_1\tau_2$ are in bijection with lattice paths from $(0,0)$ to $(\tau_1,\tau_2)$, with steps $(1,0)$ and $(0,1)$, and staying weakly below the line $x=2y-1$. The bijection simply makes the $i$th step of the path be $(1,0)$ (resp.\ $(0,1)$) if entry $i$ is in the bottom (resp.\ top) row of the tableau. These paths can then be counted similarly to ballot paths to deduce the above formula.

Using the Robinson--Schensted bijection, triangular Young tableaux with at most two rows can also be interpreted as $321$-avoiding involutions with certain additional restrictions.

\subsection{Pyramidal partitions}

There is a higher-dimensional analogue of triangular partitions. 
A {\em $d$-dimensional pyramidal partition} is a finite subset of $\bbN^d$ that can be separated from its complement by a hyperplane in $\mathbb{R}^d$.
These objects were first considered in \cite{Onn1999}, and some bounds on their growth were given in \cite{Wagner2002}.
By construction, $2$-dimensional pyramidal partitions are the same as triangular partitions, whereas $3$-dimensional pyramidal partitions can be viewed as a subset of plane partitions.

One could ask which of the results from this paper generalize to pyramidal partitions. For example, the argument in the proof of Proposition~\ref{charact_triang2D} also shows that a finite subset $\pi\subset\bbN^d$ is a $d$-dimensional pyramidal partition if and only if $\Conv(\pi)\cap\Conv(\bbN^2\setminus\pi) = \emptyset$. In addition, some experimentation suggests that the M\"obius function of the poset of $3$-dimensional pyramidal partitions ordered by containment only takes values in $\{-1,0,1\}$, as in the case of triangular partitions.

However, some other properties of triangular partitions do not extend to higher dimensions. 
In work in progress, we have shown that for any fixed $d\ge3$, there are $d$-dimensional pyramidal partitions with an arbitrary large number of removable or addable cells, in contrast with Lemma~\ref{lem:removable-addable}.
It has also been observed by Vincent Pilaud (personal communication, 2023) that the poset of $3$-dimensional pyramidal partitions is no longer a lattice. Understanding the intervals of this poset is an interesting avenue of research.

\subsection{Convex and concave partitions}

Two other generalizations of triangular partitions are convex partitions and concave partitions.
Convex partitions are mentioned in OEIS entry A074658~\cite{oeis}, due to Dean Hickerson, where they are counted for size up to $55$. The notion of concave partitions appears in work of Blasiak et al.~\cite[Conjecture 7.1.1]{Blasiak2023} in connection to Schur positivity.

Convex (resp.\ concave) partitions can be defined as intersections (resp.\ unions) of triangular partitions, or equivalently as finite subsets of $\bbN^2$ that lie below a convex (resp.\ concave) polygonal curve.
In work in progress, we have shown that a partition is triangular if and only if it is convex and concave, and we have extended some of the results from Section~\ref{sec:triangular_young_poset} to the lattices of convex and concave partitions.

\subsection*{Acknowledgements}
The authors thank Fran\c{c}ois Bergeron for introducing them to triangular partitions and for helpful discussions.
SE was partially supported by Simons Collaboration Grant \#929653. AG was partially supported by the mobility grants of CFIS-UPC, Generalitat de Catalunya and Gobierno de Navarra.

\printbibliography

\end{document}